\documentclass[11pt]{article}
\usepackage[T1]{fontenc}
\usepackage{graphicx,amsmath,amsthm,amsfonts,amssymb,microtype,thmtools,underscore,mathtools,wrapfig,anyfontsize,thm-restate}
\usepackage[shortlabels]{enumitem}
\setlist[itemize]{topsep=0ex,itemsep=0ex,parsep=0.4ex}
\setlist[enumerate]{topsep=0ex,itemsep=0ex,parsep=0.4ex}
\usepackage[dvipsnames,svgnames,table]{xcolor}
\usepackage[unicode=true]{hyperref}
\hypersetup{
	colorlinks=true,
	breaklinks=true,
	linkcolor={black},
	citecolor={black},
	urlcolor={blue!60!black},
	pdftitle={Short Paths in the Planar Graph Product Structure Theorem},
	pdfauthor={Kevin Hendrey, David~R.~Wood}} 
\usepackage[noabbrev,capitalise]{cleveref}
\crefname{lem}{Lemma}{Lemmas}
\crefname{thm}{Theorem}{Theorems}
\crefname{cor}{Corollary}{Corollaries}
\crefname{prop}{Proposition}{Propositions}
\crefname{conj}{Conjecture}{Conjectures}
\crefname{open}{Open Problem}{Open Problems}
\crefname{claim}{Claim}{Claims}
\crefformat{equation}{(#2#1#3)}
\Crefformat{equation}{Equation #2(#1)#3}

\newcommand{\defn}[1]{\textcolor{Maroon}{\emph{#1}}}


\newcommand{\TT}{\mathcal{T}}

\newcommand{\JJ}{\mathcal{J}}
\newcommand{\GG}{\mathcal{G}}
\newcommand{\BB}{\mathcal{B}}
\newcommand{\DD}{\mathcal{D}}

\usepackage[longnamesfirst,numbers,sort&compress]{natbib}
\makeatletter
\def\NAT@spacechar{~}
\makeatother
\setlength{\bibsep}{0.4ex plus 0.2ex minus 0.2ex}
\usepackage[margin=30mm]{geometry}
\setlength{\baselineskip}{1.2ex}
\setlength{\parindent}{0cm}
\setlength{\parskip}{1.5ex}

\DeclarePairedDelimiter{\floor}{\lfloor}{\rfloor}
\DeclarePairedDelimiter{\ceil}{\lceil}{\rceil}

\renewcommand{\geq}{\geqslant}
\renewcommand{\leq}{\leqslant}
\newcommand{\subsetsim}{\mathrel{\substack{\textstyle\subset\\[-0.3ex]\textstyle\sim\\[-0.4ex]}}}

\newcommand{\StrongProd}{\mathbin{\boxtimes}}

\DeclareMathOperator{\dist}{dist}

\DeclareMathOperator{\bn}{bn}
\DeclareMathOperator{\ltw}{ltw}
\DeclareMathOperator{\stw}{stw}
\DeclareMathOperator{\tw}{tw}

\DeclareMathOperator{\ppw}{ppw}

\DeclareMathOperator{\bw}{bw}
\DeclareMathOperator{\rad}{rad}
\renewcommand{\thefootnote}{\fnsymbol{footnote}}
\theoremstyle{plain}
\newtheorem{thm}{Theorem}
\newtheorem{lem}[thm]{Lemma}
\newtheorem{cor}[thm]{Corollary}
\newtheorem{obs}[thm]{Observation}
\newtheorem{claim}{Claim}[thm]
\theoremstyle{definition}
\newtheorem{conj}[thm]{Conjecture}
\newcommand{\PP}{\mathcal{P}}
\newcommand{\LL}{\mathcal{L}}

\newcommand{\RR}{\mathbb{R}}

\begin{document}
	
    \title{\bf Short Paths in the \\
    Planar Graph Product Structure Theorem}
	
    \author{Kevin Hendrey\footnotemark[2] 
    \qquad David R. Wood\footnotemark[2] }
	
    \maketitle
	
    \footnotetext[2]{School of Mathematics, Monash   University, Melbourne, Australia\\ (\texttt{\{Kevin.Hendrey1,David.Wood\}@monash.edu}). Research supported by the Australian Research Council. }
	
	\begin{abstract}
		The Planar Graph Product Structure Theorem of Dujmovi\'c~\emph{et~al.}~[J.~ACM~'20] says that every planar graph $G$ is contained in $H\boxtimes P\boxtimes K_3$ for some planar graph $H$ with treewidth at most 3 and some path $P$. This result has been the key to solving several old open problems. Several people have asked whether the  Planar Graph Product Structure Theorem can be proved with good upper bounds on the length of $P$. No $o(n)$ upper bound was previously known for $n$-vertex planar graphs.     We answer this question in the affirmative, by proving that for any $\epsilon\in (0,1)$ every $n$-vertex planar graph is contained in $H\boxtimes P\boxtimes K_{O(1/\epsilon)}$, for some planar graph $H$ with treewidth 3 and for some path $P$ of length $O(\frac{1}{\epsilon}n^{(1+\epsilon)/2})$. This bound is almost tight since there is a lower bound of $\Omega(n^{1/2})$ for certain $n$-vertex planar graphs.  In fact, we prove a stronger result with $P$ of length $O(\frac{1}{\epsilon}\,\tw(G)\,n^{\epsilon})$, which is tight up to the $O(\frac{1}{\epsilon}\,n^{\epsilon})$ factor for every $n$-vertex planar graph $G$. Finally, taking $\epsilon=\frac{1}{\log n}$, we show that every $n$-vertex planar graph $G$ is contained in $H\boxtimes P\boxtimes K_{O(\log n)}$ for some planar graph $H$ with treewidth at most 3 and some path $P$ of length $O(\tw(G)\,\log n)$. This result is particularly attractive since the treewidth of the product $H\boxtimes P\boxtimes K_{O(\log n)}$ is within a $O(\log^2n)$ factor of the treewidth of $G$. 
	\end{abstract}
	
	\newpage
	
	\renewcommand*{\thefootnote}{\arabic{footnote}}
	
	\section{Introduction}\label{sec:intro}
	
A central theme of graph structure theory is to describe complicated graphs in terms of simpler graphs. Graph product structure theory achieves this goal, by describing complicated graphs as subgraphs of products of simpler graphs, typically with bounded treewidth. As defined in \cref{sec:TreeDecompositions}, the treewidth of a graph $G$, denoted by \defn{$\tw(G)$}, is the standard measure of how similar $G$ is to a tree.
	
As illustrated in \cref{ProductExample}, the \defn{strong product} $A \StrongProd B$ of graphs  $A$ and $B$ has vertex-set
	\begin{wrapfigure}[10]{r}{70mm}
		\centering
		\includegraphics{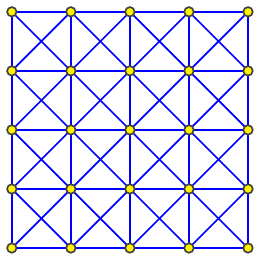}
		\caption{\label{ProductExample} Strong product of two paths.}
	\end{wrapfigure}
	$V(A) \times V(B)$, where distinct vertices $(v,x)$ and $(w,y)$ are adjacent if:
	\begin{itemize}
		\item $v = w$ and $xy \in E(B)$, or 
		\item $x = y$ and $vw \in E(A)$, or
		\item $vw\in E(A)$ and $xy \in E(B)$.
	\end{itemize}

	The following Planar Graph Product Structure Theorem is the classical example of a graph product structure theorem. Here, a graph $H$ is \defn{contained} in a graph $G$ if $H$ is isomorphic to a subgraph of $G$, written \defn{$H \subsetsim G$}. 
	
	\begin{thm}
		\label{PGPST} 
		For every planar graph $G$:
		\begin{enumerate}[(a)]
			\item $G\subsetsim H \StrongProd P$ for some graph $H$ with $\tw(H)\leq 6$ and path $P$ \textup{\citep{UWY22}},
			\item $G\subsetsim H \StrongProd P \StrongProd K_2$ for some graph $H$ with $\tw(H)\leq 4$ and path $P$ \textup{\citep{UWY22}},
			\item $G\subsetsim H \StrongProd P \StrongProd K_3$ for some graph $H$ with $\tw(H)\leq 3$ and path $P$ \textup{\citep{DJMMUW20}}.
		\end{enumerate}
	\end{thm}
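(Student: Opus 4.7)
The plan is to prove part (c) directly and derive (a) and (b) by related layered arguments. For (c), I would assume without loss of generality that $G$ is a plane triangulation, fix any vertex $r$, and take a BFS spanning tree $T$ rooted at $r$ with layers $L_0,L_1,\ldots$. The path $P$ will have one vertex per nonempty layer. Call a subtree of $T$ a \emph{vertical path} if it is contained in some root-to-leaf path of $T$; such a path meets each layer in at most one vertex. The target is a partition $\mathcal{H}$ of $V(G)$ into \emph{tripods} $X=P_1\cup P_2\cup P_3$, where each $P_i$ is a vertical path. Defining $H$ on vertex-set $\mathcal{H}$ with an edge whenever $G$ has an edge between the corresponding parts, the map sending $v\in V(G)$ to the triple (its part, its layer, the index $i\in\{1,2,3\}$ with $v\in P_i$) embeds $G$ in $H\StrongProd P\StrongProd K_3$.

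The combinatorial heart is to construct $\mathcal{H}$ so that $\tw(H)\le 3$. I would build it by peeling tripods from a shrinking plane region, maintaining the invariant that the current region $G'$ is a near-triangulation whose outer cycle decomposes into three vertical paths $Q_1,Q_2,Q_3$ of $T$. Initially, start with an outer triangle of $G$; the three outer vertices extended toward $r$ along $T$ give the initial boundary. The standard tripod lemma (proved by induction on $|V(G')|$) says that either $G'$ itself is a single tripod, or there is an interior vertex $v$ whose $T$-paths toward each of $Q_1,Q_2,Q_3$ form a tripod whose removal splits $G'$ into at most three plane subregions, each again bounded by three vertical paths of $T$. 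Recursing yields a rooted tree on $\mathcal{H}$; taking each bag to be the union of a part with its two ancestors produces a tree decomposition of $H$ of width at most $3$.

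The main obstacle, and where planarity is used decisively, is verifying that this construction really gives a tree decomposition of $H$: every edge of $H$ must sit in some bag. This reduces to showing that two adjacent tripods are ancestor-related in the recursion tree, which follows because a child tripod is trapped inside a plane region whose boundary consists of the three vertical paths coming from its parent and grandparents, so all of its $H$-neighbours lie on that short chain. A second subtlety is the base case: the initial outer triangle's extension toward $r$ must itself be treated as the root tripod so that no boundary vertex is left unassigned.

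For parts (a) and (b), one could in principle absorb the $K_3$ factor into $H$ at the cost of multiplying treewidth by $3$, but this is too weak. Instead I would rerun the BFS layering with a coarser partition primitive tuned to each target: for (b), pair up two vertical paths per part to get $\tw(H)\le 4$ with a leftover $K_2$ factor, and for (a), use single vertical paths but coarsen the tree decomposition of $H$ to absorb the third direction, aiming for $\tw(H)\le 6$. The pitfall in both cases is re-establishing the width bound after merging, which requires carefully tracking how many ancestor parts can simultaneously meet a given bag; this is what forces $6$ and $4$ rather than $3$.
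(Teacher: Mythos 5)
The paper does not prove \cref{PGPST}: it is stated as a known result with citations, and the surrounding text explicitly attributes part (a) to \citet{DJMMUW20} (with $\tw(H)\leq 8$) and \citet{UWY22} (with $\tw(H)\leq 6$), part (b) to Dujmovi\'c as presented in \citep{UWY22}, and part (c) to \citet{DJMMUW20} with an alternative proof in \citep{ISW24}. There is therefore no ``paper's own proof'' to compare against; what you have written is an outline of the external \citep{DJMMUW20} argument.

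As such a sketch, your plan for part (c) has the right skeleton (BFS layering, tripod parts, a Sperner-type peeling recursion, tree-decomposition of $H$ from the recursion tree), but it contains two concrete errors. First, the invariant that each region's boundary cycle is a union of exactly three vertical paths cannot be maintained: after peeling a tripod, a subregion is bounded by two legs of the new tripod plus segments of the old boundary, which already exceeds three vertical paths. The actual invariant in \citep{DJMMUW20} is that the boundary consists of legs from at most three tripods (hence at most six vertical paths), and each subregion inherits at most two of those old tripods together with the new one. Second, ``union of a part with its two ancestors'' gives bags of three tripods and hence width $2$; but $\tw(H)\geq 3$ is \emph{necessary} here, as the paper itself notes, so this cannot be right. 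The correct bag consists of the new tripod together with the (at most three) tripods appearing on its region's boundary, i.e.\ four tripods, giving width $3$. Finally, the closing remarks on (a) and (b) are not arguments: part (a) with $\tw(H)\leq 6$ is a separate, sharper recursion due to \citet{UWY22}, not a coarsening of the $K_3$ decomposition, and part (b) likewise requires its own construction.
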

	
	\citet{DJMMUW20} first proved \cref{PGPST}(a) with $\tw(H)\leq 8$. In follow-up work, \citet{UWY22} improved the bound to $\tw(H)\leq 6$. Part (b) is due to Dujmovi\'c, and is presented in \citep{UWY22}. Part (c) is in the original paper of  \citet{DJMMUW20}. \citet{ISW24} gave a new proof of part (c). \citet{HJ24} establish analogous results where $G$ is an induced subgraph of $H\boxtimes P\boxtimes K_c$. 
	
	\cref{PGPST} provides a powerful tool for studying questions about planar graphs, by reducing to graphs of bounded treewidth. Indeed, this result has been the key for resolving several open problems regarding queue layouts~\cite{DJMMUW20}, nonrepetitive colourings~\cite{DEJWW20}, centred colourings~\cite{DFMS21}, adjacency labelling schemes~\cite{GJ22,BGP22,EJM23,DEGJMM21}, twin-width~\cite{BDHK24,JP22,KPS24}, infinite graphs~\cite{HMSTW}, comparable box dimension~\cite{DGLTU22}, and transducibility~\citep{GPP,HJ25}. 
	
	In several of these applications, because the dependence on $\tw(H)$ is often exponential, the best bounds are obtained by applying the 3-term product in \cref{PGPST}(c). The $\tw(H)\leq 3$ bound in \cref{PGPST}(c) is best possible in any result saying that every planar graph is contained $H \StrongProd P \StrongProd K_c$ where $P$ is a path (see \citep{DJMMUW20}). 
	
	The following natural question\footnote{This question was independently asked by Pat Morin and Alex Scott, and probably others.} arises in \cref{PGPST}: How short can we make the path $P$? Consider a planar graph $G$ with $n$ vertices. It is easy to see that one can assume that $|V(H)|\leq n$ and $|V(P)|\leq n$. Moreover, the proofs of \cref{PGPST} show that $|V(P)|\leq\rad(G)+1$. However, $\rad(G)$ can be $\Theta(n)$. So no $o(n)$ upper bound on $|V(P)|$ is known. It is necessary that $|V(P)|\geq \Omega(\sqrt{n})$, since there are $n$-vertex planar graphs with treewidth $\Theta(\sqrt{n})$, implying\footnote{It is easily seen that 
		$\tw(H_1 \StrongProd H_2) \leq (\tw(H_1)+1)|V(H_2)|-1$, by replacing each bag $B_x$ of an optimal tree-decomposition of $H_1$ with a bag $B'_x:=\{(v,w)\in V(H_1\boxtimes H_2):v\in B_x\}$.
        }  
	\[\Omega(\sqrt{n}) \leq \tw(G)\leq \tw(H\boxtimes P)\leq (\tw(H)+1)|V(P)|-1 \leq O(|V(P)|).\]
	The first contribution of this paper is to establish the following upper bound on $|V(P)|$ that almost matches this lower bound. 

	\begin{thm}
		\label{PGPST-sqrtn} 
		For any $\epsilon\in (0,\frac12)$ and integer $n\geq 1$, every $n$-vertex planar graph $G$ is contained in $H \StrongProd P \StrongProd K_c$ for some planar graph $H$ with $\tw(H)\leq 3$, some path $P$ with 
		$|V(P)|\leq (\frac{32}{\epsilon}+52) n^{(1+\epsilon)/2}$, and some integer $c\leq 24+\frac{8}{\epsilon}$.
	\end{thm}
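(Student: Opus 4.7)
The plan is to prove the theorem by strong induction on $n$, bootstrapping from \cref{PGPST}(c). The base case handles graphs of small radius directly, and the inductive step uses a BFS-layer separator to reduce to smaller subgraphs.

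For the base case (or small-radius case), if $\rad(G)\leq n^{(1+\epsilon)/2}$ then \cref{PGPST}(c) immediately yields $G\subsetsim H\StrongProd P\StrongProd K_3$ with $|V(P)|\leq \rad(G)+1\leq n^{(1+\epsilon)/2}+1$ and $c=3$, well within the bounds. For the inductive step, assume $\rad(G)>n^{(1+\epsilon)/2}$. Run BFS from an arbitrary root, obtaining layers $V_0,V_1,\ldots,V_h$ with $h\geq n^{(1+\epsilon)/2}$. By a pigeonhole argument, I would identify a consecutive strip $S=V_i\cup V_{i+1}\cup\cdots\cup V_{i+L-1}$ of $L=\lceil n^{(1-\epsilon)/2}\rceil$ BFS layers simultaneously satisfying (i) $|S|\leq O(n^{1-\epsilon})$ and (ii) both $|V_{<i}|$ and $|V_{>i+L-1}|$ are at most $2n/3$. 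The averaging argument: over starting indices $i$ in the balanced range, the average strip size is $Ln/h\leq O(n^{1-\epsilon})$.

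Apply the induction hypothesis to the two halves $G_1=G[V_{\leq i+L-1}]$ and $G_2=G[V_{\geq i}]$, each planar with $\leq 2n/3+O(n^{1-\epsilon})$ vertices. Crucially, in each recursion the BFS is rooted at the strip's boundary so that the strip's $L$ layers occupy predictable leading positions of the recursive product paths $P_1$ and $P_2$. Combine the two resulting structures by aligning $P_1$ and $P_2$ in opposite directions from a common central strip segment, taking $H=H_1\sqcup H_2$ (so $\tw(H)\leq 3$ is preserved by the disjoint union), and identifying each strip vertex's two copies via an additive $O(1)$ increase in $c$.

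The main obstacle is the combining step: the two copies of each strip vertex must be merged in the final product, which forces compatible path and column coordinates. I would handle this by strengthening the inductive hypothesis to specify the position of a designated ``boundary'' set of vertices in the output structure. Recursion depth should be capped at $O(1/\epsilon)$ so that $c\leq 24+8/\epsilon$: beyond that depth, switch to \cref{PGPST}(c) on the remaining subgraph. Tracking the constants carefully, each recursion level adds $O(n^{(1-\epsilon)/2})$ to the path length and a bounded constant to $c$, accumulating over $O(1/\epsilon)$ levels to the claimed $|V(P)|\leq(32/\epsilon+52)n^{(1+\epsilon)/2}$ and $c\leq 24+8/\epsilon$.
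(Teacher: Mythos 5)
Your plan has a genuine gap in the combining step, and the quantitative accounting also does not close. Concretely:

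\textbf{The disjoint union $H=H_1\sqcup H_2$ cannot be correct.} When you cut $G$ at a strip $S$ of BFS layers, the edges of $G$ with both endpoints in $S$, or with one endpoint in $S$ and the other just outside, genuinely ``cross'' the split. After recursion, a strip vertex $v\in S$ is assigned a part $h_1\in V(H_1)$ in the $G_1$-structure and a part $h_2\in V(H_2)$ in the $G_2$-structure. In the final structure $v$ must be mapped to a \emph{single} vertex of $H\boxtimes P\boxtimes K_c$, and every neighbour of $v$ in $G_2\setminus S$ must then land in a part of $H$ adjacent to $v$'s part. With $H=H_1\sqcup H_2$ no vertex of $H_1$ is adjacent to any vertex of $H_2$, so these edges are simply not covered. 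Increasing $c$ cannot repair this: enlarging the $K_c$ factor lets several vertices of $G$ share an $(h,p)$-cell, but it does not create adjacencies between $H_1$ and $H_2$, nor does it let you identify distinct parts. You allude to strengthening the induction to ``specify the position of a designated boundary set,'' but the strip has $\Theta(n^{1-\epsilon})$ vertices — far too many to confine to a bounded number of $H$-parts or $K_c$-slots — and you give no mechanism for forcing the two recursive $H$-partitions to agree on $S$.

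\textbf{The path-length and recursion-depth accounting does not close.} Even if the combining step worked, aligning $P_1$ and $P_2$ with an overlap of $L=\Theta(n^{(1-\epsilon)/2})$ layers yields a combined path of length roughly $|V(P_1)|+|V(P_2)|-L$. With $|V(G_1)|,|V(G_2)|\le \tfrac23 n + O(n^{1-\epsilon})$ and the claimed bound, this is about $2(\tfrac{32}{\epsilon}+52)(\tfrac23 n)^{(1+\epsilon)/2}$, and since $2(2/3)^{(1+\epsilon)/2}>1$ for $\epsilon<\tfrac12$, the inductive bound is not restored. Separately, capping the recursion depth at $O(1/\epsilon)$ and then falling back to \cref{PGPST}(c) gives $|V(P)|\le\rad(G')+1$, where $G'$ still has $(2/3)^{O(1/\epsilon)}n$ vertices; a planar graph of that size can have radius $\Theta((2/3)^{O(1/\epsilon)}n)$, which exceeds $n^{(1+\epsilon)/2}$ for large $n$. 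To actually shrink subproblems (or BFS depth) below $n^{(1+\epsilon)/2}$ you need $\Theta(\log n)$ recursion levels, which would push $c$ to $\Omega(\log n)$ rather than $O(1/\epsilon)$.

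For contrast, the paper does not recurse at all. It triangulates $G$ (\cref{Triangulate}), builds a parent-dominated tree-partition of bounded weak-diameter (\cref{lem:twdiamparts}, using the connectivity of minimal separators in triangulations), chooses a sparse set of ``seed'' vertices along carefully selected upward paths in that tree-partition (\cref{lem:upwardpaths,GenerateSeeds}), and uses the seeds to define a layering with few layers together with a spanning tree whose Eppstein-style tree-decomposition (\cref{GenerateFromSpanningTree}) hits each layer in $O(1/\epsilon)$ vertices per bag (\cref{GenerateTreeDecomp,MainLemma}). The product structure with a planar treewidth-$3$ host then comes from the $K^*_{3,3}$-minor-free conversion of \citet{ISW-arXiv} (\cref{SimpleLayeringTreeDecompProduct}). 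If you want to pursue a divide-and-conquer route, you would first need a mechanism for gluing $H$-partitions across a separator — which is essentially what the tree-decomposition-to-$H$-partition machinery of \citet{ISW24} provides — and you would need to abandon the concatenation picture for $P$ in favour of a single global layering with few layers.
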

	
	The above-mentioned lower bound says that $|V(P)|\geq \Omega(\tw(G))$ in \cref{PGPST}. 
	We prove the following almost-matching upper bound. \cref{PGPST-tw}  implies and strengthens \cref{PGPST-sqrtn} since $\tw(G)\leq 2\sqrt{3n}-1$ for every $n$-vertex planar graph $G$ (see \citep[Lemma~10~and~Theorem~12]{DMW17}).  

	\begin{thm}
		\label{PGPST-tw} 
		For any $\epsilon\in (0,\frac12)$ and integer $n\geq 1$, every $n$-vertex planar graph $G$ is contained in $H \StrongProd P \StrongProd K_c$ for some planar graph $H$ with $\tw(H)\leq 3$, some path $P$ with $|V(P)|\leq (\frac{9}{\epsilon}+15 )(\tw(G)+1)^{1-\epsilon}\,n^{\epsilon}$, and some integer $c\leq 24+\frac{8}{\epsilon}$.
	\end{thm}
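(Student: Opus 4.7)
The plan is to prove \cref{PGPST-tw} by iteratively compressing the path factor of a product decomposition, starting from the decomposition given by \cref{PGPST}(c): $G \subsetsim H_0 \boxtimes Q_0 \boxtimes K_3$ with $|V(Q_0)| = \rad(G)+1 \leq n$, $\tw(H_0) \leq 3$, and $H_0$ planar. The starting decomposition can have $|V(Q_0)|$ as large as $\Theta(n)$, so the goal is to shrink this while letting $c$ grow gradually.

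The core technical step is a \emph{compression lemma}: given any product decomposition $G \subsetsim H \boxtimes Q \boxtimes K_{c'}$ of the required form and any parameter $k \geq 1$, construct a new decomposition $G \subsetsim H' \boxtimes Q' \boxtimes K_{c'+8}$ with $\tw(H') \leq 3$, $H'$ planar, and $|V(Q')|$ bounded by roughly $k + |V(Q)|/k$. I would prove this by an averaging argument: choose $\lceil |V(Q)|/k\rceil$ ``cut positions'' in $Q$ whose pre-images in $G$ form a separator $S$ with $|S| \leq n/k$ (possible by shifting the offset to find a sparse arithmetic progression of pre-images). The graph $G-S$ then splits into \emph{slabs}, each confined to at most $k$ consecutive $Q$-positions. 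Apply \cref{PGPST}(c) locally to each slab via a multi-source BFS from the slab's boundary, obtaining $G-S \subsetsim H_1 \boxtimes Q_1 \boxtimes K_3$ with $|V(Q_1)| \leq k$. Finally, re-incorporate $S$ into the product using 8 extra clique colors, leveraging that each vertex of $S$ lies in a single BFS layer, so its $G$-neighbours outside $S$ all lie in adjacent layers and can be routed through the larger clique.

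Iterating the compression lemma $O(1/\eps)$ times, with parameter $k$ chosen at each step to balance the two terms, shrinks the path length from $n$ to $O(\frac{1}{\eps}(\tw(G)+1)^{1-\eps} n^\eps)$, while the clique factor grows additively to at most $3 + 8\lceil 1/\eps \rceil + O(1) \leq 24 + 8/\eps$. The base case of the iteration is reached once $|V(Q)|$ is already comparable to $\tw(G)$, where the trivial bound $|V(Q)| \leq n$ already satisfies the target.

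The main obstacle will be the compression lemma itself, specifically the step that re-incorporates $S$: encoding the slab decompositions and the separator contributions into a single product structure with $\tw(H') \leq 3$, planar $H'$, and only $+8$ extra clique colors. This requires exploiting the planar structure of $S$ (as an induced subgraph of $G$) together with the localized nature of the edges of $G$ that cross between $S$ and the slabs, since the chosen cut positions correspond to entire BFS layers so that all such crossing edges lie between consecutive BFS layers and can be assigned distinct clique coordinates consistently.
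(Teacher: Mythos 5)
Your proposed compression lemma does not appear to work, and the gap is exactly where you flagged it: re-incorporating the separator $S$. The observation that crossing edges between $S$ and the slabs lie between consecutive BFS layers controls only the path (second) coordinate of the product; it says nothing about the $H'$-coordinate. A vertex $v\in S$ can have $\Theta(n)$ neighbours spread across the slabs, and after you re-layer each slab by BFS from its boundary those neighbours can land in many distinct $H_1$-parts. To be adjacent in the product to all of them, $v$'s $H'$-part must equal or be adjacent to each such $H_1$-part, and once you try to place an entire BFS layer's worth of such vertices in this way the constraints on $H'$ are incompatible with $\tw(H')\leq 3$ and planarity. Nothing in the proposal addresses this, and the $+8$ clique budget per iteration is an unsupported number, not a consequence of any structural claim.

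There is also an external sanity check that the compression lemma cannot be true as stated. Averaging gives $|S|\leq n/k$; applying \cref{PGPST}(c) to $G[S]$ gives a path of length at most $|S|\leq n/k$; the slabs already have a path of length at most $k$. If one could combine these pieces with $\tw(H')\leq 3$ and a bounded clique factor, then a \emph{single} application at $k=\sqrt{n}$ would already give $|V(P)|\leq O(\sqrt{n})$ with constant $c$, with no iteration and no $\eps$. But $|V(P)|\leq O(\sqrt{n})$ is explicitly listed as an open problem in \cref{sec:openproblems}. When an intermediate lemma you propose would, in one shot, resolve an open question that the authors of the theorem you invoke as a black box did not resolve, that is strong evidence the lemma is false rather than a shortcut. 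A related symptom: your compression step has no dependence on $\tw(G)$, so the claimed stopping rule ``once $|V(Q)|$ is comparable to $\tw(G)$'' has no mechanism behind it, and the $\tw(G)^{1-\eps}$ factor in the target cannot emerge from the argument as written.

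For comparison, the paper never iterates a product decomposition. It works with a layering and a tree-decomposition directly (\cref{MainLemma}), and converts these into the product at the very end via \cref{SimpleLayeringTreeDecompProduct}. The layering is generated from a \emph{seed} $(S,f)$---a set of roots with staggered starting times---built from a parent-dominated tree-partition of weak-diameter $O(\tw(G))$ (\cref{lem:twdiamparts}, which uses that minimal separators in planar triangulations are connected, \cref{TriangulationSeparator}) and a decomposition of that tree into edge-disjoint upward paths (\cref{lem:upwardpaths}). The tree-decomposition is generated from a spanning tree of $G$ containing the seeded BFS forest via the dual-tree construction (\cref{GenerateFromSpanningTree,GenerateTreeDecomp}). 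The quantitative heart is that for every edge $vw$ the tree path $T'_{v,w}$ crosses at most $\tau+2=O(1/\eps)$ components of the forest (\cref{GenerateSeeds}), which is precisely what bounds $|B_x\cap L|$; the $\tw(G)^{1-\eps}$ factor in $|\mathcal L|$ comes from the weak-diameter bound, not from a stopping rule.
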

	
	We now compare \cref{PGPST-tw} with previous similar results in terms of $\tw(G)$. Improving on earlier work in \citep{UTW}, \citet{ISW24} showed that every planar graph $G$ is contained in $H\boxtimes K_{\tw(G)+1}$ where $\tw(H)\leq 3$. \cref{PGPST-tw} compares favourably to this result, since in \cref{PGPST-tw} there is a path of length $O(\tw(G)n^\epsilon)$ rather than a complete graph of order $\tw(G)+1$.
	
	\cref{PGPST-tw} with $\epsilon=\frac{1}{\log_2 n}$ implies:
	
	\begin{cor}
		\label{PGPST-log} 
		For any integer $n\geq 1$, every $n$-vertex planar graph $G$ is contained in $H \StrongProd P \StrongProd K_c$ for some planar graph $H$ with $\tw(H)\leq 3$, some path $P$ with $|V(P)|\leq (18\log_2n+30) (\tw(G)+1)$, and some integer $c\leq 24 +8\log_2 n$.
	\end{cor}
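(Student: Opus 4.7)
The plan is simply to substitute $\epsilon = 1/\log_2 n$ into \cref{PGPST-tw} and simplify. The key calculation is that with this choice of $\epsilon$, we have $n^{\epsilon} = n^{1/\log_2 n} = 2$, so the $n^{\epsilon}$ factor in the length bound becomes a constant $2$.

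First I would verify that $\epsilon = 1/\log_2 n$ lies in the admissible range $(0, \frac12)$ required by \cref{PGPST-tw}, which holds provided $n \geq 5$. The small cases $n \leq 4$ can be handled directly by taking $H = G$, $P = K_1$, and $c = 1$ (since planar graphs on at most $4$ vertices have treewidth at most $3$), or indeed by any trivial construction meeting the stated bounds.

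For $n \geq 5$, I would apply \cref{PGPST-tw} with $\epsilon = 1/\log_2 n$ to obtain a planar graph $H$ with $\tw(H) \leq 3$, a path $P$, and an integer $c$ achieving the product containment. The path length bound becomes
\[
|V(P)| \leq \left(\tfrac{9}{\epsilon} + 15\right)(\tw(G)+1)^{1-\epsilon} n^{\epsilon} = (9 \log_2 n + 15)(\tw(G)+1)^{1-\epsilon} \cdot 2.
\]
Since $1 - \epsilon \leq 1$ and $\tw(G)+1 \geq 1$, we have $(\tw(G)+1)^{1-\epsilon} \leq \tw(G)+1$, giving
\[
|V(P)| \leq (18 \log_2 n + 30)(\tw(G)+1),
\]
as required. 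Similarly, $c \leq 24 + \tfrac{8}{\epsilon} = 24 + 8\log_2 n$.

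There is no real obstacle here beyond bookkeeping: the only subtlety is ensuring $\epsilon$ is in range, and this is handled by the trivial small-$n$ argument. The entire proof should be a few lines.
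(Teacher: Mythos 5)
Your proposal is correct and follows exactly the same route as the paper, which derives the corollary by substituting $\epsilon = 1/\log_2 n$ into \cref{PGPST-tw} and using $n^{1/\log_2 n} = 2$. You even go slightly beyond the paper's one-line derivation by noting that $\epsilon \in (0, \tfrac12)$ requires $n \geq 5$ and explicitly handling the trivial small-$n$ cases, which the paper leaves implicit.
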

	
	This result is particularly attractive since 
	$H \StrongProd P \StrongProd K_c$ has treewidth  $O(\tw(G)\,\log^2n)$. \citet{DMWW} asked whether for every planar graph $G$, there exists a bounded treewidth graph $H$ and a path $P$ such that $G\subsetsim H\boxtimes P$ and $\tw(H\boxtimes P) \in O(\tw(G))$? \cref{PGPST-log} answers this question within a $\log^2 n$ factor. 
		
    We now give a high-level sketch of our proof. Our starting point is a tool of \citet{ISW24} that converts a tree-decomposition of a graph $G$ excluding a particular minor into a partition of $G$ (see \cref{TreeDecompPartition,TreeDecompPartition-Kst}, \cref{sec:prodpartlay}). This enables us to work with the seemingly weaker notion of layered tree-decompositions, instead of graph products. 
    \citet{BV13} showed that planar graphs can be triangulated without significantly increasing their treewidth, which allows us to restrict our attention to planar triangulations. 
    Given a planar triangulation $G$, our goal now is to find a tree-decomposition and a layering of $G$, where each bag of the tree-decomposition intersects each layer in a bounded number of vertices, and the number of layers is `small'. 
    The number of layers we use will end up corresponding to the length of the path in the product. 
    In \cref{sec:TreeDecompositions}, we explain how the tree-decomposition of $G$ will be generated from a carefully constructed spanning tree $T$ of $G$.
    We use an extension of a method of \citet{Eppstein99}, where the underlying tree of the decomposition is a spanning tree of the dual $G^*$. Every bag of this decomposition is the union of two paths in $T$, each with the property that their end-vertices are adjacent in $G$. 

    We now need to find a spanning tree $T$ and a layering with few layers, such that for each edge $vw$ of $G$, the $vw$-path in $T$ intersects a bounded number of layers. The standard way to produce a layering is to partition the vertices according to their distance from a root vertex (BFS layering). This approach can produce $\Omega(n)$ layers, which is too big for our purposes. A natural variant partitions the vertices according to their distance from a set of root vertices. This produces a BFS spanning forest $F$ with one root vertex in each component. For this approach to work, we would need each vertex to be `close' to some root vertex, so that the number of layers is small. 
    For any integer $r\geq 1$, the following additional properties would then suffice: (a) the root vertices are at pairwise distance greater than $2r+1$, and (b) each component of the graph obtained from $G$ by deleting the radius-$r$ balls centred at the root vertices is adjacent to at most two of these balls.
    With these properties, one can use the fact that minimal separators in planar triangulations are connected (see \cref{TriangulationSeparator}, \cref{sec:tree-partitions}) together with the observation that each component of $F$ contains exactly one of these radius-$r$ balls to deduce that contracting the edges of $F$ will reduce $G$ to a tree.
    Then for each edge $vw$ of $G$, we see that the $vw$-path in $T$ intersects at most two components of $F$ and thus intersects each layer in a bounded number of vertices, as desired. 
    
    Our proof in fact requires a more complicated variant of this approach, where the layering is computed iteratively. At each step, the next layer consists of the neighbours of the previous layer, plus a carefully chosen set of new roots (not already assigned to previous layers). 
    Using this delayed root-introduction strategy, we find a layering and a spanning tree $T$ such that for each edge $vw$, the $vw$-path in $T$ intersects a bounded number of components, which suffices for our purposes.
    This is the core of the proof, and is presented in \cref{sec:FindingSeeds}.
    An important step towards finding this layering is to show that $G$ has a tree-partition in which each part has weak-diameter $O(\tw(G))$ (see \cref{lem:twdiamparts}, \cref{sec:tree-partitions}), a result of independent interest. 

Our final contributions show that various beyond planar graph classes have product structure theorems akin to \cref{PGPST-sqrtn}. This includes fan-planar graphs, $k$-planar graphs, and powers of planar graphs with bounded degree. These results are presented in \cref{sec:Extensions}.
    
\section{Tree-decompositions}
\label{sec:TreeDecompositions}
	
	We consider finite simple undirected graphs $G$ with vertex-set $V(G)$ and edge-set $E(G)$. For a vertex $v\in V(G)$, let $N_G(v):=\{w\in V(G): vw\in E(G)\}$.
	To \defn{contract} an edge $vw$ in a graph $G$ means to delete $v$ and $w$, and add a new vertex with neighbourhood $(N_G(v)\cup N_G(w))\setminus\{v,w\}$. 
		A graph $H$ is a \defn{minor} of a graph $G$ if a graph isomorphic to $H$ can be obtained from a subgraph of $G$ by contracting edges. A graph $G$ is \defn{$H$-minor-free} if $H$ is not a minor of $G$. 
		
		Given a tree $T$ and vertices $v,w\in V(T)$, we denote by $T_{v,w}$ the unique path from $v$ to $w$ in $T$.
		
		For a tree $T$ with $V(T)\neq\emptyset$, a \defn{$T$-decomposition} of a graph $G$ is a collection $(B_x:x \in V(T))$ such that:
		\begin{itemize}
			\item $B_x\subseteq V(G)$ for each $x\in V(T)$, 
			\item for every edge ${vw \in E(G)}$, there exists a node ${x \in V(T)}$ with ${v,w \in B_x}$, and 
			\item for every vertex ${v \in V(G)}$, the set $\{ x \in V(T) : v \in B_x \}$ induces a non-empty (connected) subtree of $T$. 
		\end{itemize}
		A \defn{tree-decomposition} is a $T$-decomposition for any tree $T$. 
		The \defn{width} of a $T$-decomposition $(B_x:x \in V(T))$ is ${\max\{ |B_x| : x \in V(T) \}-1}$. The \defn{treewidth} of a graph $G$, denoted \defn{$\tw(G)$}, is the minimum width of a tree-decomposition of $G$. 
		Treewidth is the standard measure of how similar a graph is to a tree. Indeed, a connected graph has treewidth at most 1 if and only if it is a tree. It is an important parameter in structural graph theory, especially Robertson and Seymour's graph minor theory, and also in algorithmic graph theory, since many NP-complete problems are solvable in linear time on graphs with bounded treewidth. See \citep{HW17,Bodlaender98,Reed97} for surveys on treewidth. 
		
		Two subgraphs $A$ and $B$ of a graph $G$ \defn{touch} if $V(A)\cap V(B)\neq\emptyset$, or some edge of $G$ has one endpoint in $A$ and the other endpoint in $B$. A \defn{bramble} in $G$ is a set of connected subgraphs of $G$ that pairwise touch. A set $S$ of vertices in $G$ is a \defn{hitting set} of a bramble $\BB$ if $S$ intersects every element of $\BB$. The \defn{order} of $\BB$ is the minimum size of a hitting set. The \defn{bramble number} of $G$ is the maximum order of a bramble in $G$. Brambles were first defined by \citet{ST93}, where they were called \defn{screens of thickness $k$}. \citet{ST93} proved the following result. 
		
		\begin{thm}[Treewidth Duality Theorem, \citep{ST93}]
			\label{TreewidthDuality}
			For every graph $G$, 
			$$\tw(G)+1=\bn(G).$$ 
		\end{thm}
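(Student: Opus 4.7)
My plan is to prove the two inequalities separately. The upper bound $\bn(G) \le \tw(G)+1$ is a Helly argument; the reverse $\bn(G) \ge \tw(G)+1$ is the difficult direction and constitutes the technical heart of the Seymour--Thomas theorem.

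\emph{Easy direction.} Fix a tree-decomposition $(B_x : x \in V(T))$ of $G$ of width $\tw(G)$. For each connected subgraph $H$ of $G$, let $T_H := \{x \in V(T) : B_x \cap V(H) \ne \emptyset\}$. Standard properties of tree-decompositions together with the connectedness of $H$ imply that $T_H$ induces a nonempty subtree of $T$. If two connected subgraphs $H_1,H_2$ touch, then either they share a vertex (in which case a bag containing the shared vertex lies in $T_{H_1} \cap T_{H_2}$) or some edge $uv$ of $G$ has $u \in V(H_1)$ and $v \in V(H_2)$ (in which case a bag containing $uv$ does). Hence for any bramble $\BB$, the family $\{T_B : B \in \BB\}$ consists of pairwise intersecting subtrees of $T$, which by the Helly property for subtrees of a tree share a common node $x$. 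Then $B_x$ is a hitting set for $\BB$ of size at most $\tw(G)+1$, so $\bn(G) \le \tw(G)+1$.

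\emph{Hard direction.} I would argue the contrapositive: if every bramble of $G$ has order at most $k$, then $\tw(G) \le k-1$. The plan is to route through the intermediate notion of a \emph{haven}: a haven of order $k+1$ is a function $\beta$ assigning to each $S \subseteq V(G)$ with $|S| \le k$ a connected component $\beta(S)$ of $G-S$, satisfying the monotonicity property that $S \subseteq S'$ (with $|S'| \le k$) implies $\beta(S') \subseteq \beta(S)$. First I would show that a haven of order $k+1$ produces a bramble of order exceeding $k$: the family $\{G[\beta(S)] : |S| \le k\}$ pairwise touches (because when $|S_1 \cup S_2| \le k$, monotonicity gives $\beta(S_1 \cup S_2) \subseteq \beta(S_1) \cap \beta(S_2) \ne \emptyset$, with additional connectivity arguments handling the case $|S_1 \cup S_2| > k$), yet no set $S^*$ of size $\le k$ can hit every element, since $S^* \cap \beta(S^*) = \emptyset$. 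Thus the hypothesis ``no bramble of order $>k$'' forces ``no haven of order $>k$''. Second, I would build the tree-decomposition recursively. Starting with an arbitrary root bag $W$ of size at most $k$, for each component $C$ of $G - W$ I use the failure of the haven condition at some small vertex set to locate a separator of size strictly less than $k$ that splits the current piece into strictly smaller subproblems; recurse on each piece, and paste the sub-decompositions at the common root bag $W$.

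The main obstacle is the second step of the hard direction. Turning ``no haven of order $>k$'' into a working recursion requires a well-founded measure that strictly decreases in each call (e.g.\ $|V(G)|-|W|$), a guarantee that the promised separator really has size less than $k$, and careful pasting across the root bag so that each vertex's bags still form a subtree of the combined decomposition. The haven-to-bramble translation in the first step is by comparison routine: once one has the right family of subgraphs it is purely a connectivity check. The no-haven-to-small-treewidth step is where the most care is needed, and where Seymour and Thomas's original argument is most delicate.
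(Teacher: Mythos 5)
Note first that the paper does not prove \cref{TreewidthDuality} at all: it is imported as a black box from \citep{ST93}, and the only direction ever used (in the proof of \cref{lem:twdiamparts}) is the easy inequality $\bn(G)\le \tw(G)+1$. Your Helly argument for that inequality is complete and correct, so everything this paper actually needs is covered by your first paragraph. As a proof of the stated equality, however, your proposal has a genuine gap: the hard direction is a plan rather than a proof, as you acknowledge. The step ``no haven of order $k+1$ implies $\tw(G)\le k-1$'' \emph{is} the Seymour--Thomas theorem, and the greedy recursion you sketch (fix a root bag $W$ of size at most $k$, split each component of $G-W$ along a separator of size less than $k$, recurse, paste at $W$) does not go through as stated: nothing in the failure of the haven condition hands you, for a prescribed component, a small separator that makes measurable progress; the width of the pasted decomposition is not controlled (bags formed from $W$ together with the new separators can exceed $k$ vertices); and there is no induction hypothesis strong enough to recurse on the pieces. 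The known proofs (Seymour--Thomas, and the short proof of Bellenbaum--Diestel) instead prove a strengthened statement by induction over brambles: for every bramble $\BB$ of $G$ one constructs a tree-decomposition of width at most $k-1$ that uses a minimum-order hitting set of $\BB$ as a bag and is assembled recursively via the brambles induced in the components left after deleting that hitting set. That strengthening is precisely what your sketch is missing, and it is where all the work lies.

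Moreover, the step you describe as routine is not: the family $\{G[\beta(S)] : |S|\le k\}$ need not be a bramble. A haven is only required to be monotone under inclusion, so on incomparable sets it can be inconsistent. For instance, let $G$ be a path $v_1v_2\dots v_n$ with $n\ge 4$ and $k=1$: assigning to each cut vertex either of the two components of $G-v_i$ satisfies monotonicity vacuously, yet choosing the left component at $v_2$ and the right component at $v_{n-1}$ gives $\beta(\{v_2\})=\{v_1\}$ and $\beta(\{v_{n-1}\})=\{v_n\}$, which do not touch. So the ``additional connectivity arguments'' you defer for the case $|S_1\cup S_2|>k$ cannot exist for this family; extracting a bramble of the same order from a haven requires a genuinely different construction (and is usually deduced \emph{from} the duality theorem rather than used as a lemma towards it). In short: the easy direction is fine and suffices for this paper's use of \cref{TreewidthDuality}, but the hard direction remains unproven, and its first reduction, as written, is false.
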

		
		A tree-decomposition $(B_x:x\in V(T))$ of a graph $G$ is \defn{$k$-simple}, for some integer $k\geq 0$,  if it has  width  at most $k$, and for every set $S$ of $k$ vertices in $G$, we have $|\{x\in V(T): S\subseteq B_x\}|\leq 2$. The \defn{simple treewidth} of a graph $G$, denoted by $\stw(G)$, is the minimum integer $k\geq 0$ such that $G$ has a $k$-simple tree-decomposition. Simple treewidth appears in several places in the literature under various guises \citep{KU12,KV12,MJP06,Wulf16}. The following facts are well known: A connected graph has simple treewidth 1 if and only if it is a path. A graph has simple treewidth at most 2 if and only if it is outerplanar. A graph has simple treewidth at most 3 if and only if it has treewidth 3 and is planar~\citep{KV12}. The edge-maximal  graphs with simple treewidth 3 are ubiquitous objects, called  \defn{planar 3-trees} in structural graph theory and graph drawing~\citep{AP-SJADM96,KV12}, called \defn{stacked polytopes} in polytope theory~\citep{Chen16}, and called \defn{Apollonian networks} in enumerative and random graph theory~\citep{FT14}. It is also known and easily proved that $\tw(G) \leq \stw(G)\leq \tw(G)+1$ for every graph $G$ (see \citep{KU12,Wulf16}). 
		
		To prove \cref{PGPST-tw} it suffices to work with planar triangulations by the following lemma of \citet{BV13}.
		
		\begin{lem}[{\protect\citep[Lemma~1]{BV13}}]
			\label{Triangulate}
			For every planar graph $G$ with $|V(G)|\geq 3$ there is a planar triangulation $G'$ with $V(G')=V(G)$ and $E(G)\subseteq E(G')$ and $\tw(G')=\max\{\tw(G),3\}$.
		\end{lem}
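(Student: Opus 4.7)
The plan is to fix a planar embedding of $G$ and iteratively insert chords inside non-triangular faces until a triangulation is reached, while maintaining a tree-decomposition whose width never exceeds $\max(\tw(G),3)$. As preliminary reductions, I would handle $|V(G)|=3$ by hand; assume $G$ is $2$-connected by processing blocks separately and then gluing; and assume $\tw(G)\ge 3$, since if $\tw(G)\le 2$ one can add a few preliminary edges to raise the treewidth to exactly $3$ while preserving planarity (a planar graph of treewidth $\le 2$ is series-parallel, so this is straightforward).

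Fix an optimal tree-decomposition $(B_x:x\in V(T))$ of $G$ of width $w=\max(\tw(G),3)$. The core observation is: every cycle $C=v_1v_2\cdots v_k$ in $G$ with $k\ge 4$ contains two non-consecutive vertices lying in a common bag. To prove this, set $T_{v_i}:=\{x\in V(T):v_i\in B_x\}$; each is a subtree of $T$, and consecutive subtrees intersect because $v_iv_{i+1}\in E(G)$, so the intersection graph of $\{T_{v_1},\ldots,T_{v_k}\}$ contains the cycle $1\,2\cdots k$. Since intersection graphs of subtrees of a tree are chordal, this cycle has a chord, yielding a non-consecutive pair $v_i,v_j$ in a common bag.

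I would then apply the claim to the boundary cycle of any face $F$ of length $\ge 4$. If the non-consecutive pair $v_i,v_j$ thus produced is non-adjacent in $G$, add the edge $v_iv_j$ inside $F$: planarity is preserved (the edge lies in the interior of $F$), the tree-decomposition remains valid of width $w$, and $F$ splits into two shorter faces. Iterating terminates with a triangulation $G'$ satisfying $\tw(G')\le w=\max(\tw(G),3)$, and the matching lower bound $\tw(G')\ge\tw(G)$ is automatic from $G\subseteq G'$.

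The main obstacle is the case where every non-consecutive pair of $V(C)$ sharing a bag is already an edge of $G$, necessarily drawn outside $F$. To resolve this I would argue, using the planar embedding together with a refined analysis of the subtree structure around $\bigcup_i T_{v_i}$, that either a non-edge chord inside $F$ always exists after possibly shifting vertices between adjacent bags of $T$ without increasing width, or else one can reduce to a strictly smaller planar instance by splitting $G$ along the conflicting external chord and triangulating the two sides recursively. Making this local modification rigorous and ensuring termination, while keeping the bag sizes bounded by $w+1$, is the most delicate part of the proof and is where I expect the bulk of the technical work to lie.
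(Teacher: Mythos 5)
Your approach is the natural one---insert chords inside non-triangular faces while maintaining a tree-decomposition of width $\max\{\tw(G),3\}$---and your key observation is correct: since the subtrees $T_{v_1},\dots,T_{v_k}$ associated to a cycle $v_1\cdots v_k$ of length $k\geq4$ pairwise intersect along consecutive pairs, and intersection graphs of subtrees of a tree are chordal, some non-consecutive pair $v_i,v_j$ must share a bag. However, you correctly identify, and then do not resolve, the central obstruction: that this pair may already be joined by an edge of $G$ lying outside the face. This is not a corner case; it is exactly where the content of the lemma lies (for example, already for a 4-face $v_1v_2v_3v_4$ with external chord $v_1v_3$ and tree-decomposition bags $\{v_1,v_2,v_3\},\{v_1,v_3,v_4\}$, the only shared pair is the existing edge $v_1v_3$, and inserting $v_2v_4$ requires modifying the decomposition). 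You offer two escape routes---``shifting vertices between adjacent bags'' or ``splitting along the conflicting external chord and recursing''---but carry out neither, and explicitly state that the bulk of the technical work would lie there. As it stands, this is a plan for a proof, not a proof: the termination of the local modification, the preservation of the width bound after shifts, and the correctness of gluing recursively triangulated pieces back together along the splitting chord are all left open. (Note that the paper does not prove this lemma either; it cites Biedl and Vatshelle~\cite{BV13}.)

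Two of your preliminary reductions are also less routine than claimed. Triangulating each 2-connected block independently does not yield a triangulation of $G$, because the faces incident to a cut vertex remain non-triangular after the blocks are processed separately; some further edges joining different blocks must be added, and these interact with the tree-decompositions being maintained. And the case $\tw(G)\leq2$ cannot be dismissed by ``raising the treewidth to $3$ first'': any planar triangulation on at least four vertices has minimum degree at least $3$ and hence treewidth at least $3$ automatically, so the lower bound is free, and the real work is showing that the final triangulation has treewidth at most $3$---which is just the main argument again with a width budget of $3$.
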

		
		The next lemma generalises a construction of a tree-decomposition due to \citet{Eppstein99}. For a plane graph $G$, let \defn{$G^*$} be the plane dual of $G$.
		
		\begin{lem}
			\label{GenerateFromSpanningTree}    
			Let $G$ be a plane triangulation, let $T$ be a spanning tree of $G$, and let $T^*$ be the spanning tree of $G^*$ such that two faces of $G$ are adjacent in $T^*$ if and only if there is an edge in $E(G)\setminus E(T)$ that is incident to both faces. For each face $f$ of $G$, let $B_f:=V(T_{v,w})\cup V(T_{w,z})$, where $v,w$ and $z$ are the three vertices incident to $f$. Then $(B_f:f\in V(T^*))$ is a $T^*$-decomposition of $G$.
		\end{lem}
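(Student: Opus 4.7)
The plan is to verify the three conditions of a $T^*$-decomposition in order of increasing difficulty. First I would check that $T^*$ is actually a spanning tree of $G^*$: this is the standard planar duality that the duals of the cotree edges form a spanning tree of $G^*$ (the count matches since a triangulation has $3n-6$ edges and $2n-4$ faces, and a cycle in $T^*$ would correspond to an edge-cut in $G$ separating $T$, which is impossible since $T$ is connected). Second, for the edge-coverage condition I would note that $V(T_{v,w}) \cup V(T_{w,z})$ automatically contains the three face-vertices $v,w,z$ (tree paths contain their endpoints), so every edge of $G$ sits in either of the two face-bags incident to it.

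The real obstacle is the connected-subtree condition: for each $u \in V(G)$, the set $X_u := \{f \in V(T^*) : u \in B_f\}$ induces a connected subgraph of $T^*$. Two ingredients should make this tractable. First, I would reinterpret $B_f$ as the vertex set of the Steiner tree in $T$ of the three vertices of $f$: indeed $V(T_{v,w}) \cup V(T_{w,z})$ is a subtree of $T$ containing both $v$ and $z$, hence contains $V(T_{v,z})$, so the bag coincides with the minimal subtree of $T$ spanning $\{v,w,z\}$ regardless of labelling. Second, I would use the standard duality that for any cotree edge $ab$ with associated $T^*$-edge $e$, removing $e$ from $T^*$ splits it into two subtrees whose face-sets are exactly the faces lying on the two sides of the fundamental cycle $C_e := T_{a,b} + ab$, because the only edges of $G$ crossing $C_e$ are edges of $T_{a,b}$ (tree edges), whose duals are absent from $T^*$.

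To prove connectivity I would argue by contradiction: assume $f_1, f_2 \in X_u$ but the unique $T^*$-path between them contains some $g \notin X_u$. Choose $g' \in X_u$ adjacent to $g$ on this path, with the corresponding edge $e = g'g$ dual to a cotree edge $ab$. Since $T_{a,b}$ is contained in the Steiner tree of the three vertices of any triangle containing $a$ and $b$, if $u \in V(T_{a,b})$ then $u \in B_g$, contradicting $g \notin X_u$; hence $u$ lies strictly on one of the two open sides of $C_e$. The key planar step is that each component of $T - V(T_{a,b})$ lies entirely in one of these two open sides (since the edges of $T$ are drawn without crossings and cannot cross $C_e$ except along $T_{a,b}$), and consequently the Steiner tree of any set of vertices lying in one closed side stays in that closed side. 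Thus if $u$ is strictly outside the $g$-side of $C_e$, then $u \notin B_{f_2}$ (because $f_2$, lying on $g$'s side, has all its vertices in the closed $g$-side), contradicting $f_2 \in X_u$; the symmetric case instead contradicts $g' \in X_u$.

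The hard part will be phrasing the planar-separation step cleanly — in particular justifying that each component of $T - V(T_{a,b})$ is confined to a single open region of $C_e$, and that dropping $e$ from $T^*$ partitions the remaining faces exactly by which side of $C_e$ they inhabit. Once these two planar facts are stated, the contradiction argument above collapses to a one-line case split, and together with Steps~1 and~2 this yields the lemma.
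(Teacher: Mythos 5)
Your argument is correct, but it takes a genuinely different route from the paper's proof of the connectivity condition. The paper works locally around the vertex $u$: it lists $u$'s neighbours $v_1,\dots,v_t$ in cyclic order, identifies the components $C_i$ of $T-u$, observes that each edge-cut $E(V(G)\setminus V(C_i),V(C_i))$ is a cycle $O_i$ in $G^*$ whose restriction to $T^*$ is a path $P_i$, and then shows that these paths overlap consecutively and their union is exactly the set of faces containing $u$ in their bag. Your argument is instead global and by contradiction: you reinterpret $B_f$ as the Steiner tree of the triangle $f$ in $T$, pick a $T^*$-edge $e=g'g$ on a hypothetical disconnection (with $g'\in X_u$, $g\notin X_u$), and use the standard dual fact that removing $e$ from $T^*$ splits the faces according to which side of the fundamental cycle $C_e = T_{a,b}+ab$ they lie on, where $ab$ is the cotree edge dual to $e$. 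The case split on where $u$ sits relative to $C_e$ (on $T_{a,b}$, strictly on the $g$-side, or strictly on the $g'$-side) then finishes the job, using that Steiner trees in $T$ cannot jump from one closed side of $C_e$ to the other. Both arguments are sound; the paper's is more constructive and avoids any appeal to the Jordan-curve separation of components of $T-V(T_{a,b})$, while yours is arguably cleaner conceptually since it exposes the fundamental-cycle duality that explains why the lemma is true. The two planar facts you flag at the end — that components of $T-V(T_{a,b})$ live in one open region, and that deleting $e$ from $T^*$ partitions faces by side of $C_e$ — are correct and standard (the second follows because every $T^*$-edge other than $e$ is dual to a cotree edge not on $C_e$, hence joins faces on the same side), so the gap you worry about is just one of exposition, not substance.
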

		
		\begin{proof}
			\citet{vonStaudt} showed that $T^*$ is a spanning tree of $G^*$.  
			First note that every edge $e\in E(G)$ is incident to some face $f$, and $e\subseteq B_f$. Now consider a vertex $v$ of $G$, and the set $S_v:=\{f\in V(T^*):v\in B_f\}$.
			Note for any three vertices $v',w',z'\in V(T)$, we have $T_{v',z'}\subseteq T_{v',w'}\cup T_{w',z'}$.
			Thus a face $f\in V(T^*)$ is in $S_v$ if and only if there is an edge $uw\in E(G)$ incident to $f$ such that $v\in V(T_{u,v})$.
			That is, either $v\in \{u,w\}$ or $u$ and $w$ are in different components of $T-v$.
			Let $t=\deg_G(v)$, let $\{v_i:i\in \mathbb{Z}_t\}$ be the neighbours of $v$ in clockwise order, and for each $i\in \mathbb{Z}_t$ let $C_i$ be the component of $T-v$ containing $v_i$, and let $f_i$ be the face of $G$ incident to $v$, $v_i$ and $v_{i+1}$ (we may assume this face is unique since otherwise $G\cong K_3$ and the result is trivial.
			Note that for each $i\in \mathbb{Z}_t$, the edge cut $E(V(G)\setminus V(C_i),V(C_i))$ corresponds to a cycle $O_i$ in $G^*$. Furthermore, $f_{i-1}$ and $f_i$ are adjacent in $O_i$, and the edge connecting them is the unique edge in $E(O_i)\setminus E(T^*)$. Thus, $P_i:=T^*[V(O_i)]$ is a path in $T^*$.
			With this definition, note that $P_i$ and $P_{i+1}$ intersect in the vertex $f_i$, and so 
			$\bigcup \{P_i:i\in \mathbb{Z}_t\}$ is connected.
			Observe that $V(\bigcup \{P_i:i\in \mathbb{Z}_t\})$ contains all and only the faces of $G$ which are incident to some edge whose end-vertices lie in distinct components of $T-v$. 
			Thus, $S_v$ induces a connected subgraph of $T^*$, as required.\end{proof}
		
		We say the tree-decomposition in \cref{GenerateFromSpanningTree} is \defn{generated by $T$}.
		
		\section{Products, partitions and layerings}
        \label{sec:prodpartlay}
		
		The following definitions  provide a useful way to think about graph products. 
		For graphs $H$ and $G$, an \defn{$H$-partition} of $G$ is a collection $(B_x:x\in V(H))$ such that:
		\begin{itemize}
			\item $\bigcup_{x\in V(H)} B_x=V(G)$, 
			\item $B_x\cap B_y=\emptyset$ for distinct $x,y\in V(H)$, and
			\item for each edge $vw\in E(G)$, if $v\in B_x$ and $w\in B_y$, then $x=y$ or $xy\in E(H)$. 
		\end{itemize}
		The \defn{width} of an $H$-partition $(B_x:x\in V(H))$ is~${\max\{ |{B_x}| \colon x \in V(H)\}}$. 
		
		A $P$-partition, where $P$ is a path, is called a \defn{path-partition} or \defn{layering}, which we describe by a sequence $\LL=(L_0,L_1,\dots)$ of subsets of $V(G)$ such that for every edge $vw\in E(G)$, if $v\in L_i$ and $w\in L_j$ then $|i-j|\leq 1$. Each set $L_i$ is called a \defn{layer}. We write $|\mathcal{L}|$ for the number of non-empty layers of $\mathcal{L}$. For example, if $r$ is a vertex in a connected graph $G$ and $L_i:=\{v\in V(G):\dist_G(r,v)=i\}$ for each integer $i\geq 0$, then $(L_0,L_1,\dots)$ is a layering of $G$, called  a \defn{BFS-layering} of $G$.
		The \defn{path-partition-width} of a graph $G$, denoted by \defn{$\ppw(G)$}, is the minimum width of a path-partition of $G$. Note that path-partition-width of a graph $G$ is closely tied to the \defn{bandwidth} of $G$, which is the minimum integer $k$ such that there is a bijection $f:V(G)\rightarrow\{1,\dots,|V(G)|\}$ with $|f(v)-f(w)|\leq k$ for each $vw\in E(G)$. In particular, $\ppw(G)\leq\bw(G) \leq 2\ppw(G)-1$ (see \citep{DJMMW25,BPTW10}).
		
		The next observation characterises when a graph is contained in $H \StrongProd P \StrongProd K_c$. 
		
		\begin{obs}[\citep{DJMMUW20}]
			\label{PartitionProduct}
			For any graph $H$, path $P$, and  integer $c\geq 1$, a graph $G$ is contained $H\StrongProd P \StrongProd K_c$ if and only if $G$ has an $H$-partition $(B_x:x\in V(H))$ and a layering $\LL$ such that $|\LL|\leq|V(P)|$, 
			and $|B_x\cap L|\leq c$ for each $x\in V(H)$ and $L\in\LL$. 
		\end{obs}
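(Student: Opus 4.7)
The plan is to prove both directions of the equivalence by directly unpacking the definitions. Write $V(P) = \{p_0, p_1, \ldots, p_{|V(P)|-1}\}$ so that $p_ip_{i+1} \in E(P)$, and $V(K_c) = \{1,\ldots,c\}$.

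For the $(\Leftarrow)$ direction, suppose we are given an $H$-partition $(B_x : x \in V(H))$ and a layering $\LL = (L_0, L_1, \ldots, L_{|\LL|-1})$ satisfying the stated bounds. For each vertex $v \in V(G)$, let $x(v)$ be the unique element of $V(H)$ with $v \in B_{x(v)}$, and let $i(v)$ be the unique index with $v \in L_{i(v)}$. Since $|B_{x(v)} \cap L_{i(v)}| \leq c$, we can fix for each pair $(x,i)$ an injection from $B_x \cap L_i$ into $\{1,\ldots,c\}$, giving a label $k(v) \in \{1,\ldots,c\}$. Define $\phi(v) := (x(v), p_{i(v)}, k(v))$, which lies in $V(H\StrongProd P \StrongProd K_c)$ since $|\LL| \leq |V(P)|$. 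I would then verify that $\phi$ is injective (distinct vertices in the same $B_x \cap L_i$ receive distinct labels) and that $\phi$ maps edges to edges (using that $x(v) = x(w)$ or $x(v)x(w) \in E(H)$ when $vw \in E(G)$, and $|i(v)-i(w)| \leq 1$), which is exactly the adjacency condition in the strong product.

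For the $(\Rightarrow)$ direction, suppose $G \subsetsim H \StrongProd P \StrongProd K_c$ via an embedding $\phi$, and write $\phi(v) = (x(v), p_{i(v)}, k(v))$. Define $B_x := \{v \in V(G) : x(v) = x\}$ for each $x \in V(H)$, and $L_i := \{v \in V(G) : i(v) = i\}$ for each $0 \leq i \leq |V(P)|-1$. The sets $B_x$ partition $V(G)$ by construction, and for any edge $vw \in E(G)$ the adjacency rule of the strong product forces either $x(v) = x(w)$ or $x(v)x(w) \in E(H)$, so $(B_x : x \in V(H))$ is an $H$-partition; the same rule forces $|i(v)-i(w)| \leq 1$, so $(L_0, L_1, \ldots)$ is a layering with at most $|V(P)|$ non-empty layers. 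Finally, $B_x \cap L_i$ is mapped injectively by $\phi$ into $\{(x,p_i)\}\times V(K_c)$, so $|B_x \cap L_i| \leq c$.

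There is really no substantive obstacle here; the only small care required is to remember the third type of edge in the strong product (where both coordinates change simultaneously), which is handled uniformly by the two conditions ``$x(v)=x(w)$ or $x(v)x(w)\in E(H)$'' and ``$|i(v)-i(w)|\leq 1$'' holding together. I would present the argument as a single short paragraph for each direction, since both reduce to reading off the definitions of $H$-partition, layering, and strong product.
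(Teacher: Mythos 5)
Your proof is correct and is exactly the standard unwinding of the definitions; the paper cites this observation from \citep{DJMMUW20} rather than reproving it, and the original proof there is essentially the same double-implication argument you give. Nothing to add.
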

		
		The \defn{layered treewidth} of a graph $G$, denoted by $\ltw(G)$, is the minimum integer $k\geq 0$ such that $G$ has a layering $\mathcal{L}$ and a tree-decomposition $(B_x:x\in V(T))$ such that $|L_i\cap B_x|\leq k$ for each $L_i\in \mathcal{L}$ and each $x\in V(T)$. This definition was independently introduced by \citet{Shahrokhi13} and \citet{DMW17}. The later authors proved that $\ltw(G)\leq 3$ for every planar graph $G$ (using a variant of \cref{GenerateFromSpanningTree}). Their proof gives no non-trivial bound on $|\mathcal{L}|$. The next lemma, which is proved at the end of \cref{sec:FindingSeeds} and is the main technical contribution of this paper,  gives such a bound. 
		
		\begin{restatable}{lem}{MainLemName}
			\label{MainLemma} 
			For any $\epsilon\in (0,\frac12)$ and integer $n\geq 1$, every planar graph $G$ with $n$ vertices admits a layering $\mathcal{L}$ and tree-decomposition $(B_x:x\in V(T))$ such that $|\mathcal{L}|\leq (\frac{9}{\epsilon}+15) (\tw(G)+1)^{1-\epsilon}\,n^{\epsilon}$ and $|B_x\cap L|\leq 12+\frac{4}{\epsilon}$ for each $x\in V(T)$ and $L\in\LL$.
		\end{restatable}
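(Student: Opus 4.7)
The plan is to follow the blueprint sketched in the introduction. I first apply \cref{Triangulate} to reduce to the case that $G$ is a planar triangulation, which changes $\tw(G)$ by at most an additive constant. I then construct a spanning tree $T$ of $G$ together with a layering $\mathcal{L}$ of $G$, and apply \cref{GenerateFromSpanningTree} to obtain the desired $T^*$-decomposition. Since each bag of that decomposition has the form $V(T_{u,v})\cup V(T_{v,w})$ for a face $uvw$, it suffices to show that $|V(T_{v,w})\cap L|\leq 6+2/\epsilon$ for every edge $vw\in E(G)$ and every layer $L\in\mathcal{L}$.

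My first step is to set up a supply of seed vertices. Using \cref{lem:twdiamparts} of \cref{sec:tree-partitions}, I take a tree-partition of $G$ whose parts have $G$-weak-diameter $O(\tw(G))$, and select one seed in each part; then every vertex of $G$ lies within $G$-distance $O(\tw(G))$ of some seed. I build $\mathcal{L}$ and a BFS-like rooted spanning forest $F$ of $G$ iteratively via the delayed root-introduction strategy: having defined $L_0,\dots,L_{i-1}$, set
\[
L_i = \bigl(N_G(L_{i-1})\setminus\textstyle\bigcup_{j<i} L_j\bigr)\cup R_i,
\]
where $R_i$ is a fresh batch of seeds. The set $R_i$ is empty on most layers and nonempty only at regularly spaced ``seeding'' steps, with spacing $r$ chosen to balance the costs described below. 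The spanning tree $T$ is then obtained by augmenting $F$ with certain inter-component edges; this augmentation yields a tree because the graph obtained from $G$ by contracting the edges of $F$ is itself a tree, using \cref{TriangulationSeparator} (minimal separators in a planar triangulation are connected) applied to the single-seed structure of each component of $F$.

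The main obstacle, and the technical heart of the argument, is to choose the seed-selection rule so that three inequalities hold simultaneously: (i) the total number of layers is at most $O((\tw(G)+1)^{1-\epsilon} n^\epsilon/\epsilon)$; (ii) for every edge $vw\in E(G)$, the path $T_{v,w}$ meets at most $O(1/\epsilon)$ components of $F$; and (iii) within any single component of $F$, the BFS-tree structure forces $T_{v,w}$ to meet each layer in $O(1)$ vertices, via a standard least-common-ancestor argument in a BFS tree together with the fact that layer numbers are monotone in BFS-depth inside a component. Property (ii) is the payoff of delayed-root-introduction: each component of $F$ must grow for $r$ layers before a neighbouring seed is planted, and the weak-diameter-$O(\tw(G))$ bound on tree-partition parts then forces only a controlled number of components to alternate along any single $G$-edge; a geometric-series-style accounting over iterations produces the $1/\epsilon$ factor. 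Property (i) is obtained by balancing the $r$-layers-per-round cost against the number of seeding rounds needed to exhaust the tree-partition, which is precisely where the interpolation $\tw(G)^{1-\epsilon}n^\epsilon$ emerges. Combining (ii) and (iii) through \cref{GenerateFromSpanningTree} then bounds $|B_f\cap L|$ by $12+4/\epsilon$, completing the proof.
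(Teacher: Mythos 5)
Your scaffolding (triangulate via \cref{Triangulate}, build a spanning tree together with a layering, pass through \cref{GenerateFromSpanningTree} and \cref{lem:twdiamparts}, then reduce to bounding $|V(T_{v,w})\cap L|$ for a single edge $vw$ and layer $L$) matches the paper's plan, and your observation (iii) that inside a single component of $F$ the intersection with $T_{v,w}$ is two ascending paths and hence hits each layer at most twice is exactly the argument of \cref{GenerateTreeDecomp}. However, the core of the proof --- the seed-selection rule --- is where your proposal has a genuine gap, and I don't think the rule you describe can be made to work. You say you ``select one seed in each part'' of the tree-partition and introduce them in ``regularly spaced seeding steps with spacing $r$.'' This is not what the paper does and it does not produce the claimed bounds. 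With one seed per part you get roughly $|V(T)|$ seeds, hence roughly $|V(T)|$ components of $F$, and there is no reason a path $T_{v,w}$ in the spanning tree should meet only $O(1/\epsilon)$ of them: if the lowest common ancestor of $v$ and $w$ is near the root, $T_{v,w}$ ranges over $\Theta(\text{depth})$ layers and, with regularly spaced seeding, crosses $\Theta(\text{depth}/r)$ components, which is unbounded. The ``geometric-series-style accounting'' you invoke is a placeholder for the missing argument.

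What the paper actually does is quite different and is captured by \cref{lem:upwardpaths}, which your proposal never uses and never reinvents. One applies \cref{lem:upwardpaths} to the tree-partition tree $T$ (not to $G$) with $k=3\tw(G)+3$ to obtain a family $\mathcal{U}$ of pairwise edge-disjoint upward paths of length exactly $k$, selected by a \emph{weight-balanced} rule that partitions nodes of $T$ into $\tau$ bands $k(n/k)^{(i-1)/\tau}< w(v)\le k(n/k)^{i/\tau}$. Seeds are placed only in the parts $P_{r_U}$ indexed by the deep endpoints $r_U$ of these upward paths --- emphatically \emph{not} one per part --- and $f(s_U)$ is the distance from $r$ to $r_U$ in the tree $\hat T$ obtained by contracting each $U\in\mathcal U$ to a single vertex. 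Property (2) of \cref{lem:upwardpaths} gives depth $O(\tau(n/k)^{1/\tau}k)$ for $\hat T$, which is where the $(\tw(G)+1)^{1-\epsilon}n^\epsilon$ factor comes from, and Property (3) (any upward path meeting $\tau+1$ paths of $\mathcal U$ must contain some deep endpoint) is what bounds the number of components of $F$ met by $T'_{v,w}$ by $\tau+2$. Neither of these properties falls out of regular spacing; they are the content of the weight-banding argument in \cref{lem:upwardpaths}. On top of this, the construction of the spanning tree $T'$ (adding, for each $U$, an edge from the shallowest vertex of $F_U$ to the parent part, made possible because the tree-partition from \cref{lem:twdiamparts} is parent-dominated) and the chain of Claims~\ref{clm:rootparts}--\ref{clm:central} showing that $T'_{v,w}$ stays within $\bigcup\{V(F_U):U\in\hat{\mathcal U}\}$ is also essential and absent. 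In short, the proposal reproduces the introduction's prose but omits \cref{lem:upwardpaths}, the weight-balanced seed placement, and the careful verification that $T'_{v,w}$ crosses few components --- which together constitute the actual proof.
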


		We now show how \cref{MainLemma} and results of \citet{ISW24} imply our main result, \cref{PGPST-tw}. 
		
		\citet{ISW24} introduced the following definition, which measures the `complexity' of a set of vertices with respect to a tree-decomposition $\DD=(B_x:x\in V(T))$ of a graph $G$. Define the \defn{$\DD$-width} of a set $S\subseteq V(G)$ to be the minimum number of bags of $\DD$ whose union contains $S$. 
		The \defn{$\DD$-width} of an $H$-partition $(B_x:x\in V(H))$ of a graph $G$ is the maximum $\DD$-width of one of the parts $B_x$. \citet[Theorem~12]{ISW24}  proved the following result, which converts a tree-decomposition of a graph $G$ (excluding certain minors) into an $H$-partition where $H$ has bounded treewidth. 
		Here \defn{$\JJ_{s, t}$} is the class of graphs $J$ whose vertex-set has a partition $A \cup B$, where $|A| = s$ and $|B| = t$, $A$ is a clique, every vertex in $A$ is adjacent to every vertex in $B$, and $J[B]$ is connected. 
		A graph $G$ is \defn{$\JJ_{s,t}$-minor-free} if no graph in $\JJ_{s,t}$ is a minor of $G$.
		
		\begin{thm}[\citep{ISW24}]
			\label{TreeDecompPartition}
			For any integers $s,t \geq 2$, for any $\JJ_{s,t}$-minor-free graph $G$, for any tree-decomposition $\DD$ of $G$, $G$ has an $H$-partition of $\DD$-width at most $t-1$, where $\tw(H)\leq s$.
		\end{thm}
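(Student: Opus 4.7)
I would aim to prove the stronger structural statement: $G$ admits a partition $(P_y : y \in V(H))$ such that
\begin{itemize}
  \item[(a)] each part $P_y$ is contained in at most $t-1$ bags of $\DD$ whose indices form a connected subtree $U_y$ of $T$, and
  \item[(b)] each bag $B_x$ of $\DD$ intersects at most $s+1$ of the parts.
\end{itemize}
Given such a partition, setting $\tilde{B}_x := \{y \in V(H) : P_y \cap B_x \neq \emptyset\}$ yields a $T$-decomposition of $H$ of width at most $s$. Indeed, (b) bounds the widths; every edge of $H$ has a witnessing bag because $\DD$ witnesses every edge of $G$; and the set $\{x : y \in \tilde B_x\}$ is connected in $T$ because it is the union of the (connected) supports of the vertices of $P_y$, all of which intersect the connected subtree $U_y$ guaranteed by~(a). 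This gives $\tw(H) \leq s$, and the $\DD$-width bound is (a).

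The construction is iterative. Root $T$ at an arbitrary vertex~$r$, and for each $v \in V(G)$ let $\mathrm{top}(v)$ be the closest-to-$r$ node of $T$ whose bag contains $v$; set $V_x := \{v : \mathrm{top}(v) = x\}$, so $V_x \subseteq B_x$. Start from the natural singleton partition $(V_x)_{x \in V(T)}$, which trivially satisfies (a) with each $U_y$ a single node. Then process nodes of $T$ in BFS order from the root and, whenever some bag $B_x$ is intersected by more than $s+1$ of the current parts, merge two of those parts into a single larger part; maintain the invariant throughout that each part's covering subtree in $T$ remains connected and has at most $t-1$ nodes. Merging is chosen so that the two subtrees being combined share an edge of $T$, preserving connectivity of $U_y$ automatically.

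The main obstacle is to show that the merging can always be carried out — i.e., that if some bag $B_x$ is intersected by at least $s+2$ parts and no merge respects the $\leq t-1$ budget, then $G$ must contain a $\JJ_{s,t}$-minor, contradicting the hypothesis. The idea for extracting the minor is as follows. Select $s$ of the parts intersecting $B_x$; each contains at least one vertex of $B_x$, and after contracting each part to a single super-node we obtain $s$ branch sets $A_1,\dots,A_s$ that are pairwise adjacent via the edges of $G[B_x]$ (after also contracting $B_x$-internal connectivity, recalling that in any tree-decomposition the vertices of a bag can be linked via external connections captured by the minor operation). The remaining $\geq 2$ parts intersecting $B_x$ have each already saturated their budget of $t-1$ bags along distinct branches of $T$ out of $B_x$ (this is precisely why no merge is available); from these branches one extracts $t$ further connected subgraphs of $G$, pairwise-disjoint from the $A_i$'s and from each other, each containing a vertex of $B_x$ and hence adjacent to every $A_i$, and whose union can be arranged to be connected by routing through $B_x$. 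These $t$ subgraphs serve as $B_1,\dots,B_t$, completing the $\JJ_{s,t}$-minor. Making the selection and the contractions consistent, and in particular ensuring the $B_j$'s can be carved disjointly from the part-supports, is the principal technical step on which the whole argument rests.
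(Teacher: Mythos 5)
The reduction at the top of your proposal is fine: a partition satisfying (a) and (b) does yield a $T$-decomposition $(\tilde{B}_x)$ of the quotient graph $H$ of width at most $s$, and (a) is the $\DD$-width bound. The fatal problem is in the step you yourself identify as the crux, and the sketch you give for it rests on a false premise: you repeatedly use a bag $B_x$ as a source of edges and connectivity. Vertices lying in a common bag of a tree-decomposition need not be adjacent, and need not even be connectable in $G$ while avoiding the other parts; so the $s$ selected parts are not ``pairwise adjacent via the edges of $G[B_x]$'', a piece containing a vertex of $B_x$ is not ``hence adjacent to every $A_i$'', and the $t$ pieces cannot in general be ``connected by routing through $B_x$''. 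You also cannot repair this by completing bags into cliques, since adding those edges can create $\JJ_{s,t}$-minors that $G$ does not have, voiding the hypothesis. Two further gaps compound this: your parts (unions of top-classes) are generally disconnected, so they cannot serve as branch sets without additional argument; and nothing is said about why the $t$ pieces carved out of the two ``saturated'' parts should each attach to all $s$ chosen parts -- lying in bags of those parts' covering subtrees gives no adjacency whatsoever. Even the merging move itself is shaky: two parts meeting $B_x$ need not have covering subtrees sharing a node or an edge (their vertices of $B_x$ may be covered by bags elsewhere in their subtrees), so the proposed merge may be unavailable or may force extra connector nodes past the $t-1$ budget, and the union of two overlapping subtrees can anyway have far more than $t-1$ nodes. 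So the minor extraction, which is the entire content of the theorem, does not go through as sketched.

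For context, this paper does not prove \cref{TreeDecompPartition}; it is imported from \citet{ISW24}, and their argument is organised quite differently. Rather than refining a fine partition by merging, they build the parts one at a time, maintaining an invariant on each component $C$ of the not-yet-partitioned graph: $C$ attaches to at most $s$ existing parts, which will form a clique (a bag together with the new part) in the tree-decomposition of $H$ being built. The next part is chosen inside $C$ via a packing--covering (Helly-type) argument over the subtrees of $T$ traced by connected subgraphs of $C$ attaching to all of these parts: either there are $t$ vertex-disjoint such subgraphs, which can be enlarged within $C$ to produce the connected pattern $J[B]$ and hence a $\JJ_{s,t}$-minor (contradiction), or at most $t-1$ bags meet every such subgraph, and the union of these bags restricted to $C$ is taken as the next part, restoring the invariant for the components of $C$ minus that part. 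All of the adjacency and connectivity bookkeeping that your sketch tries to borrow from $G[B_x]$ is instead carried by that carefully maintained invariant; reconstructing it is the real work of the theorem.
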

		
		\citet{ISW24} noted that \cref{TreeDecompPartition,PartitionProduct} imply that every $\JJ_{s,t}$-minor-free graph $G$ with $\ltw(G)\leq c$ is contained in $H\boxtimes P\boxtimes K_{(t-1)c}$, for some graph $H$ with $\tw(H)\leq s$ and for some path $P$. In fact, \cref{TreeDecompPartition,PartitionProduct} imply the following slightly stronger statement. 
		
		\begin{lem}
			\label{LayeringTreeDecompProduct}
			For any integers $s, t \geq 2$, for any $\JJ_{s,t}$-minor-free graph $G$, if $G$ has a layering $\LL$ and a tree-decomposition $(B_x:x\in V(T))$ with $|B_x\cap L|\leq c$ for each $x\in V(T)$ and $L\in\LL$, then $G$ is contained in $H\boxtimes P\boxtimes K_{(t-1)c}$, for some graph $H$ with $\tw(H)\leq s$ and for some path $P$ with $|V(P)|\leq|\LL|$. 
		\end{lem}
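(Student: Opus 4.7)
The plan is to combine \cref{TreeDecompPartition} applied to $\DD := (B_x : x \in V(T))$ with \cref{PartitionProduct}, using the layering $\LL$ unchanged. The only real content is checking that the $\DD$-width bound on the resulting $H$-partition interacts cleanly with the layer-intersection bound.

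First, since $G$ is $\JJ_{s,t}$-minor-free, \cref{TreeDecompPartition} applied to the tree-decomposition $\DD$ yields an $H$-partition $(C_y : y \in V(H))$ of $G$ of $\DD$-width at most $t-1$, where $\tw(H) \leq s$. So for each $y \in V(H)$ there is a set $X_y \subseteq V(T)$ with $|X_y| \leq t-1$ and $C_y \subseteq \bigcup_{x \in X_y} B_x$.

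Next, I would bound $|C_y \cap L|$ for each layer $L \in \LL$. Using the containment above and the hypothesis $|B_x \cap L| \leq c$, we get
\[
|C_y \cap L| \;\leq\; \sum_{x \in X_y} |B_x \cap L| \;\leq\; (t-1)c.
\]
This uniform bound across all $y$ and $L$ is exactly the input condition required by \cref{PartitionProduct}.

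Finally, applying \cref{PartitionProduct} to the $H$-partition $(C_y : y \in V(H))$ and the layering $\LL$ with parameter $(t-1)c$ gives $G \subsetsim H \StrongProd P \StrongProd K_{(t-1)c}$ for some path $P$ with $|V(P)| \leq |\LL|$, as required. There is no real obstacle here; the lemma is essentially just observing that \cref{TreeDecompPartition} preserves the per-layer width up to the factor $t-1$, so the argument is a one-line combination of the two cited results once the sum bound above is written down.
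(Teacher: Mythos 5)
Your proof is correct and is exactly the argument the paper has in mind: the paper does not write out a separate proof but simply asserts that Lemma~\ref{LayeringTreeDecompProduct} follows from \cref{TreeDecompPartition} and \cref{PartitionProduct}, and your summation over the at most $t-1$ bags covering each part $C_y$ is the one short computation needed to make that implication explicit.
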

		
		Since $\JJ_{s,2}=\{K_{s+2}\}$ and planar graphs are $K_5$-minor-free, \cref{LayeringTreeDecompProduct} is applicable for planar graphs with $s=3$ and $t=2$. With \cref{MainLemma} this implies:
		
		
		
		\begin{thm}
			\label{PGPST-tw-nonplanar} 
			For any $\epsilon\in (0,\frac12)$ and  integer $n\geq 1$, every planar graph $G$ with $n$ vertices is contained in $H \StrongProd P \StrongProd K_c$ for some graph $H$ with $\tw(H)\leq 3$, some path $P$ with $|V(P)|\leq 
			\frac{33}{2\epsilon} n^\epsilon (\tw(G)+1)^{1-\epsilon}$, and some integer $c\leq 12 +\frac{4}{\epsilon}$.
		\end{thm}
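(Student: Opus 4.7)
The plan is to deduce this theorem by directly composing \cref{MainLemma} with \cref{LayeringTreeDecompProduct}, exploiting the fact that planar graphs are $\JJ_{3,2}$-minor-free. Essentially all of the real work has been done in \cref{MainLemma}; the present step is a short bookkeeping argument, so there is no serious obstacle to overcome, only a small arithmetic simplification to verify.

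First I would apply \cref{MainLemma} to the $n$-vertex planar graph $G$ and the given $\epsilon\in(0,\tfrac12)$. This yields a layering $\LL$ and a tree-decomposition $(B_x:x\in V(T))$ of $G$ with
\[
|\LL|\le \Bigl(\tfrac{9}{\epsilon}+15\Bigr)(\tw(G)+1)^{1-\epsilon}\,n^{\epsilon}
\qquad\text{and}\qquad
|B_x\cap L|\le 12+\tfrac{4}{\epsilon}
\]
for every $x\in V(T)$ and $L\in\LL$. Next, since planar graphs contain no $K_5$-minor and $\JJ_{3,2}=\{K_5\}$, the graph $G$ is $\JJ_{3,2}$-minor-free. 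Thus \cref{LayeringTreeDecompProduct} applies with $s=3$, $t=2$, and $c=12+\tfrac{4}{\epsilon}$, producing a graph $H$ with $\tw(H)\le 3$ and a path $P$ with $|V(P)|\le|\LL|$ such that
\[
G\subsetsim H\StrongProd P\StrongProd K_{(t-1)c}=H\StrongProd P\StrongProd K_{12+4/\epsilon}.
\]

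Finally I would simplify the coefficient from $\tfrac{9}{\epsilon}+15$ to $\tfrac{33}{2\epsilon}$. The required inequality $\tfrac{9}{\epsilon}+15\le \tfrac{33}{2\epsilon}$ rearranges to $15\le \tfrac{15}{2\epsilon}$, i.e.\ $\epsilon\le \tfrac12$, which is exactly the hypothesis on $\epsilon$. Combining this with the bounds above gives
\[
|V(P)|\le \tfrac{33}{2\epsilon}\,(\tw(G)+1)^{1-\epsilon}\,n^{\epsilon}
\qquad\text{and}\qquad
c\le 12+\tfrac{4}{\epsilon},
\]
which is the statement of \cref{PGPST-tw-nonplanar}.
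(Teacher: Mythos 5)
Your proof is correct and follows exactly the route the paper takes: apply \cref{MainLemma} to obtain the layering and tree-decomposition, then invoke \cref{LayeringTreeDecompProduct} with $s=3$, $t=2$ using $\JJ_{3,2}=\{K_5\}$ and $K_5$-minor-freeness of planar graphs. The arithmetic check that $\tfrac{9}{\epsilon}+15\le\tfrac{33}{2\epsilon}$ for $\epsilon\le\tfrac12$ is also right and completes the bookkeeping.
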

		
		\cref{PGPST-tw-nonplanar} improves the bound on $c$ in \cref{PGPST-tw} by a factor of 2, at the expense of $H$ not necessarily being planar. To prove 
		\cref{PGPST-tw} with $H$ planar, 
		we use the following analogue of \cref{TreeDecompPartition} for simple treewidth, established in the arXiv version of the paper by \citet[Theorem~27]{ISW-arXiv}. Here \defn{$K^{\ast}_{s,t}$} is the graph whose vertex-set can be partitioned $A \cup B$, where $|A| = s$, $|B| = t$,  $A$ is a clique, $B$ is an independent set, and every vertex in $A$ is adjacent to every vertex in $B$; that is, $K^{\ast}_{s, t}$ is obtained from $K_{s, t}$ by adding all the edges inside the part of size $s$.
		
		\begin{thm}[\citep{ISW-arXiv}]
			\label{TreeDecompPartition-Kst}
			For any integers $s, t \geq 2$, for any $K^*_{s,t}$-minor-free graph $G$, for any tree-decomposition $\DD$ of $G$, $G$ has an $H$-partition of $\DD$-width at most $t-1$, where $\stw(H)\leq s$.
		\end{thm}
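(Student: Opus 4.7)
The plan is to refine the construction underlying \cref{TreeDecompPartition} so that the quotient $H$ inherits simple treewidth at most $s$, rather than only treewidth at most $s$. First observe that $K^{\ast}_{s,t}$-minor-freeness is strictly stronger than $\JJ_{s,t}$-minor-freeness, since any $\JJ_{s,t}$-minor in $G$ yields a $K^{\ast}_{s,t}$-minor by deleting the edges inside the connected $t$-side. Hence \cref{TreeDecompPartition} already delivers an $H$-partition of $\DD$-width at most $t-1$ with $\tw(H) \le s$; the new work is to arrange, either by redoing the construction or by post-processing, that no $s$-element subset of $V(H)$ occurs in more than two bags of the tree-decomposition of $H$ derived from $\DD$.

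To make this concrete, I would root the tree $T$ of $\DD = (B_x : x \in V(T))$ at an arbitrary vertex, and for each $v \in V(G)$ let $\rho(v)$ be the node of $\{x : v \in B_x\}$ closest to the root. Processing $V(T)$ in BFS order from the root, vertices would be assembled into parts greedily, each part $P$ being constrained to (i) have $\DD$-width at most $t-1$, and (ii) satisfy that $\{x : P \cap B_x \ne \emptyset\}$ is a subtree of $T$. Invariant (ii) ensures that setting $B'_x := \{P : P \cap B_x \ne \emptyset\}$ yields a valid $T$-decomposition of the quotient graph $H$, whose width is at most $s$ by essentially the counting argument of \cref{TreeDecompPartition}.

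The main obstacle is establishing the simple condition, which I would attack by contradiction. Suppose parts $P_1, \ldots, P_s$ appear together in three distinct bags $B'_{x_1}, B'_{x_2}, B'_{x_3}$; by invariant (ii) we may assume $x_2$ lies on the $T$-path between $x_1$ and $x_3$. Using the $\DD$-width bound, one extracts inside each $P_i$ a connected subgraph $A_i \subseteq G[P_i]$ meeting all three anchor bags, providing the $s$ branch sets of the clique side of a prospective $K^{\ast}_{s,t}$-minor. The technical heart, which I expect to be the hardest step, is to produce $t$ pairwise non-adjacent connected branch sets $Q_1, \ldots, Q_t$, each adjacent to every $A_i$. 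For this I would exploit the greedy part-creation rule itself: whenever the construction refused to merge two parts, it did so because merging would have violated $\DD$-width $t-1$, and this refusal leaves a structural obstruction in $G$. Accumulating these obstructions across the three anchor bags, while maintaining the non-adjacency forced by the greedy choice, should yield the required $t$ branch sets and hence a forbidden $K^{\ast}_{s,t}$-minor, contradicting the hypothesis and completing the proof that $\stw(H) \le s$.
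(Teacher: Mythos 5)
The paper does not prove this statement; it cites it from the arXiv version of ISW~\citep[Theorem~27]{ISW-arXiv}, so there is no in-paper proof to compare against. Evaluating your sketch on its own merits: the opening observation is correct --- deleting the edges inside the $t$-side of any $\JJ_{s,t}$-minor yields a $K^{\ast}_{s,t}$-minor, so $K^{\ast}_{s,t}$-minor-freeness implies $\JJ_{s,t}$-minor-freeness, and \cref{TreeDecompPartition} already gives an $H$-partition of $\DD$-width at most $t-1$ with $\tw(H)\leq s$. The remaining work is to upgrade $\tw$ to $\stw$, and that is where the sketch has genuine gaps.

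The most concrete issue is the clique side of the prospective minor. You take $s$ parts $P_1,\ldots,P_s$ that lie in three common bags of the derived $T$-decomposition of $H$ and then treat the $G[P_i]$ as the $s$ mutually adjacent branch sets. But the simple-treewidth condition quantifies over \emph{all} $s$-element subsets of a bag, not only cliques, and a bag of a tree-decomposition need not induce a clique; without an argument that these $s$ parts are pairwise adjacent in $H$ (or can be replaced by parts that are), the $A$-side of a $K^{\ast}_{s,t}$-minor cannot be extracted. The second, larger gap is that the existence of the $t$ pairwise-disjoint connected branch sets, each adjacent to all the $A_i$, is asserted rather than argued: ``accumulating these obstructions \ldots\ should yield the required $t$ branch sets'' is a placeholder for exactly the step where the $\DD$-width-$(t-1)$ refusals have to be converted into structure in $G$, and it is not at all clear that the greedy refusals accumulate compatibly across three anchor bags. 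Finally, a smaller point: you do not need the $Q_j$ to be pairwise non-adjacent. Since $K^{\ast}_{s,t}$ is a minor as soon as there are $s+t$ pairwise-disjoint connected branch sets with the $A_i$ pairwise adjacent and each $Q_j$ adjacent to every $A_i$ (extra edges among the $Q_j$ can simply be deleted), insisting on non-adjacency makes the step you already flag as hardest strictly harder than necessary.
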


		\cref{TreeDecompPartition-Kst,PartitionProduct} imply the following variant of \cref{LayeringTreeDecompProduct}. 
		
		\begin{lem}
			\label{SimpleLayeringTreeDecompProduct}
			For any integers $s, t \geq 2$, for any $K^{\ast}_{s,t}$-minor-free graph $G$, if $G$ has a layering $\LL$ and a tree-decomposition $(B_x:x\in V(T))$ with $|B_x\cap L|\leq c$ for each $x\in V(T)$ and $L\in\LL$, then $G$ is contained in $H\boxtimes P\boxtimes K_{(t-1)c}$, for some graph $H$ with $\stw(H)\leq s$ and for some path $P$ with $|V(P)|\leq|\LL|$. 
		\end{lem}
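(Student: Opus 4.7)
The plan is to combine \cref{TreeDecompPartition-Kst} with \cref{PartitionProduct} in the same manner that \cref{LayeringTreeDecompProduct} combines \cref{TreeDecompPartition} with \cref{PartitionProduct}. Fix a $K^{\ast}_{s,t}$-minor-free graph $G$ together with a layering $\LL$ and a tree-decomposition $\DD=(B_x:x\in V(T))$ satisfying $|B_x\cap L|\leq c$ for every $x\in V(T)$ and $L\in\LL$.

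First, I apply \cref{TreeDecompPartition-Kst} to $G$ and $\DD$ to obtain an $H$-partition $(A_y:y\in V(H))$ of $G$ of $\DD$-width at most $t-1$, where $\stw(H)\leq s$. By definition of $\DD$-width, for each $y\in V(H)$ there is a set $F_y\subseteq V(T)$ with $|F_y|\leq t-1$ and $A_y\subseteq \bigcup_{x\in F_y}B_x$.

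Next, I bound $|A_y\cap L|$ for each $y\in V(H)$ and each layer $L\in\LL$. Using the containment $A_y\subseteq \bigcup_{x\in F_y}B_x$ and the hypothesis $|B_x\cap L|\leq c$, the union bound gives
\[
|A_y\cap L|\;\leq\;\sum_{x\in F_y}|B_x\cap L|\;\leq\;(t-1)c.
\]
Thus $(A_y:y\in V(H))$ is an $H$-partition of $G$ in which every part meets every layer in at most $(t-1)c$ vertices.

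Finally, I invoke \cref{PartitionProduct} with the parameter $(t-1)c$: since $G$ admits an $H$-partition and a layering $\LL$ with $|A_y\cap L|\leq (t-1)c$ for all $y$ and $L$, we conclude that $G\subsetsim H\boxtimes P\boxtimes K_{(t-1)c}$ for some path $P$ with $|V(P)|\leq|\LL|$, where $\stw(H)\leq s$, as required. There is no real obstacle here; the only thing to be careful about is that the $\DD$-width notion really does let one multiply the per-bag layer-intersection bound $c$ by the covering number $t-1$, which is immediate from the union bound above.
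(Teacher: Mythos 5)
Your proof is correct and follows exactly the route the paper intends: apply \cref{TreeDecompPartition-Kst} to produce an $H$-partition of $\DD$-width at most $t-1$ with $\stw(H)\le s$, bound each $|A_y\cap L|$ by $(t-1)c$ via the union bound over the covering bags, and then invoke \cref{PartitionProduct}. The paper states the lemma as an immediate consequence of those two results without spelling out the union-bound step, so your write-up simply makes explicit what the paper leaves implicit.
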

		
		Since planar graphs are $K^{\ast}_{3,3}$-minor-free, \cref{SimpleLayeringTreeDecompProduct} is applicable for planar graphs with $s=t=3$. With \cref{MainLemma}, this implies \cref{PGPST-tw}, since (as mentioned above) a graph $H$ has $\stw(H)\leq 3$ if and only if $H$ is planar and $\tw(H)\leq 3$. 
		

		
		
		\section{Tree-partitions}
		\label{sec:tree-partitions}
		
		A \defn{tree-partition} is a $T$-partition where $T$ is a tree. 
		Tree-partitions were independently introduced by \citet{Seese85} and \citet{Halin91}, and have since been widely investigated \citep{Bodlaender-DMTCS99,BodEng-JAlg97,DO95,DO96,Edenbrandt86,Wood06,DKCPS,Wood09,BGJ22,DS20,DW24}. The primary focus of this work has been the width of tree-partitions. For our purposes, the following parameter is important. The \defn{weak-diameter} of a tree-partition $(B_x:x\in V(T))$ of a graph $G$ is the maximum of $\dist_G(v,w)$ taken over all vertices $v$ and $w$ in the same part $B_x$. 
		A tree-partition $(B_x:x\in V(T))$ is \defn{rooted} at $r\in V(T)$ if $T$ is rooted at $r$. A rooted tree-partition $(B_x:x\in V(T))$ is \defn{parent-dominated} if for each edge $vw\in E(T)$, if $w$ is the parent of $v$, then every vertex in $B_v$ has at least one neighbour in $B_w$.

		We need the following folklore lemma, whose proof we include for completeness. 
		
		\begin{lem}
			\label{TriangulationSeparator}
			For any connected subgraphs $H_1$ and $H_1$ of a planar triangulation $G$, every minimal set $S\subseteq V(G)$ separating $V(H_1)$ and $V(H_2)$ induces a connected subgraph of $G$.
		\end{lem}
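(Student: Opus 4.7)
The plan is a planarity/topology argument: I will build a closed walk in $G[S]$ that visits every vertex of $S$, which is enough to conclude that $G[S]$ is connected.

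Let $C_1$ and $C_2$ denote the components of $G - S$ containing $V(H_1)$ and $V(H_2)$. The only use of the minimality hypothesis I expect to need is that every $s \in S$ has a neighbor in $V(C_1)$ and a neighbor in $V(C_2)$: otherwise $S \setminus \{s\}$ would still separate $V(H_1)$ from $V(H_2)$.

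Fix a planar embedding of $G$ in the sphere $S^2$, and let $R \subseteq S^2$ be the closed region formed by the union of all (closed) faces of $G$ that contain at least one vertex of $V(C_1)$. Because no edge of $G$ joins $V(C_1)$ to any other component of $G - S$, every face contributing to $R$ has all three vertices in $V(C_1) \cup S$. I would use this to show that each edge of $G$ on the topological boundary $\partial R$ has both endpoints in $S$: such an edge has one incident face inside $R$ and one outside, so it cannot touch $V(C_1)$, as otherwise both of its incident faces would contain that $V(C_1)$-vertex and both would lie in $R$. Hence $\partial R$ is a union of $S$-$S$ edges of $G$.

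Next, let $F^*$ be the unique connected component of $S^2 \setminus R$ that contains the drawing of $C_2$ (well-defined because $C_2$ is connected and disjoint from $R$). The key topological step is to prove $F^*$ is simply connected, for which I plan to show its complement in $S^2$ is connected: the complement equals $R$ together with the other connected components of $S^2 \setminus R$; each such component has its topological boundary inside $\partial R \subseteq R$, and $R$ itself is connected (it is the closed thickening of the connected subgraph $C_1$ by incident faces). Since on $S^2$ an open connected set is simply connected if and only if its complement is connected, $F^*$ is homeomorphic to an open disk. Its boundary, being a finite polygonal curve in the plane graph $G$, therefore traces out a single closed walk in the subgraph $G[S]$.

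To finish I would check that every $s \in S$ lies on $\partial F^*$. Since $s$ has a neighbor $c \in V(C_2) \subseteq F^*$, both triangular faces of $G$ incident to the edge $sc$ contain $c$ and so lie in $F^*$; this puts $s$ in $\overline{F^*} \setminus F^* = \partial F^*$. Combining the ingredients, $\partial F^*$ is a closed walk in $G[S]$ passing through every vertex of $S$, so $G[S]$ is connected. The step I expect to require the most care is the simple-connectivity of $F^*$ and the resulting single-closed-walk description of its boundary; possible ``pinch points'' where $\partial F^*$ visits the same vertex more than once are harmless, since any closed walk in a graph is a connected subgraph regardless of such repetitions.
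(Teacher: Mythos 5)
Your proof takes a genuinely different, topological route from the paper's argument (the paper uses a curve around the boundary of a face and a component-by-component analysis), and for the case it covers the idea is sound. However, it has a real gap: you begin by letting $C_1$ and $C_2$ be ``the components of $G-S$ containing $V(H_1)$ and $V(H_2)$,'' which tacitly assumes $V(H_1)\cap S=\emptyset$ and $V(H_2)\cap S=\emptyset$. The lemma does not assume this, and the paper's proof explicitly splits into three cases: $V(H_1)\subseteq S$; $H_1$ disjoint from $S$; and $H_1$ meeting $S$ but not contained in it. In the last case $V(H_1)\setminus S$ can be spread over several components $J_1,\dots,J_k$ of $G-S$, so there is no single $C_1$, and your derived claim that every $s\in S$ has a neighbour in $C_1$ and in $C_2$ also fails (if $s\in V(H_1)\cap S$, the path from $V(H_1)$ to $V(H_2)$ through $s$ avoiding $S\setminus\{s\}$ may start at $s$, giving $s$ no neighbour in any $C_1$). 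This is not a cosmetic restriction: in the proof of \cref{lem:twdiamparts} the lemma is applied to a minimal Menger separator $S$ between the parts $B_i$ and $B_{i-k}$, and such a separator is allowed to contain vertices of $B_i$ or $B_{i-k}$, so the full generality is needed.

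Two smaller points. First, the step asserting that $\partial F^*$ is traced by a single closed walk deserves more justification than ``simply connected $\Rightarrow$ boundary is a closed walk''; this is essentially the statement that the boundary walk of a face of a connected plane graph is a single closed walk, and you should say why $G$ restricted near $F^*$ is connected (it is, via $\partial R$ and the triangulation hypothesis, but the argument is not automatic). Second, even in the disjoint case you should observe $C_1\neq C_2$ (otherwise $S$ would not separate), which you use implicitly when placing $C_2$ inside $S^2\setminus R$. If you restrict the lemma to $H_1,H_2$ disjoint from $S$ and fill in the boundary-walk step, your argument is a clean alternative; to match the paper's statement you need to handle $H_i$ meeting $S$, e.g.\ by replacing $C_1$ with the union of all components of $G-S$ meeting $H_1$ together with $V(H_1)\cap S$, as the paper effectively does.
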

		
		\begin{proof}
			If $V(H_1)\subseteq S$ then $S=V(H_1)$ by the minimality of $S$, implying $G[S]$ is connected, as desired. 
			Now assume that $V(H_1)\setminus S\neq\emptyset$.
			Let $J_1,\dots,J_k$ be the components of $G-S$ intersecting $H_1$. Let $J:=J_1\cup\dots\cup J_k$. 
			Since $S$ separates $H_1$ and $H_2$, the subgraphs $J$ and $H_2$ are disjoint. Since $H_2$ is connected, $H_2$ lies within a single face $F$ of the embedding of $J$ induced by the embedding of $G$. 
			Let $B_i$ be the set of vertices of $J_i$ on the boundary of $F$. Let $N_i$ be the set of vertices in $V(G)\setminus V(J)$ that are adjacent to at least one vertex in $B_i$. Since $J_i$ is a component of $G-S$, we have $N_i\subseteq S$. One can draw a closed curve $X_i$ in $F$ touching $G$ precisely at the vertices in $N_i$. Since $G$ is a triangulation, consecutive vertices on $X_i$ are adjacent in $G$. So $G[N_i]$ is connected. By construction, $N_i$ separates $J_i$ and $H_2$. 
			If $H_1$ and $S$ are disjoint, then $k=1$ and $N_1$ separates $H_1$ and $H_2$, implying $S=N_1$ by the minimality of $S$, and $G[S]=G[N_1]$ is connected, as desired. Now assume that $H_1$ intersects $S$. 
			Each $J_i$ is connected and intersects $H_1$, which is connected. 
			So $H_1\cup J_1\cup\dots\cup J_k$ is connected. 
			Since  $H_1$ intersects $S$ and $H_1$ is connected, each set $B_i$ has a vertex in $H_1$ which is adjacent to a vertex in $N_i\cap V(H_1)$. Since $G[N_i]$ is connected, $X:=H_1\cup J_1\cup\dots\cup J_k\cup G[N_1]\cup \dots\cup G[N_k]$ is connected.
			Note that all neighbours of $J_i$ in $X-J_i$ are in $N_i$, that $X[N_i]$ is connected and that $J_i$ is disjoint from $N_1\cup\dots\cup N_k$.
			It follows that $X-(J_1\cup\dots\cup J_k)$ is connected.
			By construction, $X-(J_1\cup\dots\cup J_k)$ is the subgraph of $G$ induced by $T:=(N_1\cup \dots\cup N_k)\cup(V(H_1)\cap S)$, which is a subset of $S$. 
			By construction, $T$ separates $H_1$ and $H_2$. By the minimality of $S$, we have $S=T$. Thus $G[S]=G[T]$ is connected.
		\end{proof}
		
		\begin{lem}\label{lem:twdiamparts}
			For every vertex $r$ of a planar triangulation $G$, $G$ has a parent-dominated tree-partition with width at most $3\tw(G)+2$ whose root part is $\{r\}$.
		\end{lem}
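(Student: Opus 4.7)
I will build the tree-partition from a BFS of $G$ starting at $r$. Let $T_0$ denote the BFS spanning tree and $L_0=\{r\}, L_1, L_2, \ldots$ the BFS layering. Take the parts of the tree-partition to be the connected components of $G[L_i]$ for each $i\geq 0$, so the root part is $\{r\}$. The partition tree $T$ is defined by joining each non-root part $C\subseteq L_i$ to the connected component of $G[L_{i-1}]$ containing $N_G(C)\cap L_{i-1}$, which will serve as its parent.

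The first task is to verify this parent is well-defined, for which I would apply \cref{TriangulationSeparator}. The set $N_G(C)\cap L_{i-1}$ can be seen as a minimal $\{r\}$-to-$C$ separator in the induced subgraph on $C\cup L_0\cup\cdots\cup L_{i-1}$, so by \cref{TriangulationSeparator} it induces a connected subgraph of $G$. Since it lies in $L_{i-1}$, it must lie in a single component of $G[L_{i-1}]$, pinning down a unique parent. The remaining tree-partition conditions are then immediate from BFS properties: intra-layer edges stay inside a single component (part), and cross-layer edges from $C$ only reach the parent. Parent-domination holds because the BFS parent $p(v)$ of each $v\in C$ is a neighbor of $v$ sitting in the parent part.

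The main obstacle is bounding each part (its size, or equivalently its weak-diameter) by $3\tw(G)+2$. I plan to handle this via the Treewidth Duality Theorem (\cref{TreewidthDuality}), by contradiction. If some part $C\subseteq L_i$ violates the bound, I would locate $\tw(G)+2$ vertices in $C$ pairwise at distance at least $3$ inside $G[L_i]$, and construct a bramble whose $j$-th element combines the BFS ancestor path $\Pi_{x_j}$ in $T_0$ with a short horizontal arc inside $C$ near $x_j$. Pairwise touching is witnessed by shared BFS ancestors or by the internal connectivity of $C$; the spacing-by-$3$ should prevent any hitting set of size $\leq\tw(G)+1$ from covering all elements, so the bramble has order at least $\tw(G)+2$, contradicting $\bn(G)=\tw(G)+1$.

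The delicate point will be the bramble construction: arranging the elements to be simultaneously pairwise-touching and hard to hit. The factor $3$ in $3\tw(G)+2$ is tied directly to the spacing between the selected vertices, and planarity (reaching beyond \cref{TriangulationSeparator}) will be essential to rule out ``shortcut'' hitting sets that exploit the BFS branching structure to collapse the bramble.
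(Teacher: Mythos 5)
Your setup—BFS layering from $r$, parts as connected components of $G[L_i]$, and \cref{TriangulationSeparator} to show each non-root part has a unique parent—matches the paper exactly, and that much is fine. The problem is the weak-diameter bound (note: the lemma statement says ``width'' but it is used later, and proved, as a \emph{weak-diameter} bound; your parenthetical ``(its size, or equivalently its weak-diameter)'' is wrong, these are very different quantities---a BFS layer component in a planar triangulation can have linear size, so a genuine width bound is false here).

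The bramble you describe does not work. Each element $\beta_j$ is $\Pi_{x_j}\cup(\text{arc near }x_j)$ where $\Pi_{x_j}$ is the BFS ancestor path of $x_j$. All such ancestor paths terminate at $r$, so every $\beta_j$ contains $r$, and $\{r\}$ is a hitting set of size $1$. You never get order $\tw(G)+2$. If instead you truncate the ancestor paths so they stop short of $r$, then it is no longer clear that the elements pairwise touch, and ``internal connectivity of $C$'' does not rescue this: touching requires a shared vertex or an edge between them, and disjoint short arcs inside a (possibly very stringy) connected $C$ need not be adjacent. The vagueness of ``short horizontal arc near $x_j$'' is hiding exactly this difficulty.

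What you're missing is a second application of \cref{TriangulationSeparator}, this time to the \emph{separator} rather than to the parent part. The paper supposes $\dist_G(v,w)\geq 3k$ for $v,w\in B_i$ (where $k=\tw(G)+1$), and asks for $k+1$ pairwise disjoint paths between the part $B_i$ and its $k$-th ancestor part $B_{i-k}$. If these don't exist, Menger gives a minimal separator $S$ with $|S|\leq k$; \cref{TriangulationSeparator} makes $G[S]$ connected, hence of diameter $\leq k-1$, and then $\dist_G(v,w)\leq\dist_G(v,S)+\dist_G(w,S)+(|S|-1)\leq k+k+(k-1)=3k-1$, a contradiction. Given the $k+1$ disjoint ``vertical'' paths $Q_1,\dots,Q_{k+1}$, the bramble is the $(k+1)\times(k+1)$ family $\{Q_a\cup B_b : a\in[k+1],\, b\in\{i-k,\dots,i\}\}$: every $Q_a$ meets every $B_b$, so all pairs touch, and since the $Q_a$'s are pairwise disjoint and the $B_b$'s are pairwise disjoint, a hitting set of size $\leq k$ must miss some row $a$ and some column $b$ and hence misses $Q_a\cup B_b$. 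That gives order $\geq k+1=\tw(G)+2$, contradicting \cref{TreewidthDuality}. The structure is ``vertical paths $\times$ horizontal parts,'' not ``ancestor paths with local arcs,'' and the connected-separator-has-small-diameter step is the load-bearing use of planarity that your sketch anticipates but does not supply.
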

		
		\begin{proof}
			Let $k:=\tw(G)+1$. Let $V_i:=\{v\in V(G):\dist_G(r,v)=i\}$ for $i\geq 0$. So ($V_0,V_1,\dots)$ is a layering of $G$. Define a graph $T$ and $T$-partition $(B_x:x\in V(T))$ as follows. For each $i\geq 0$ and each component $C$ of $G[V_i]$, add one vertex $x$ to $T$ with $B_x:=V(C)$. For distinct vertices $x,y\in V(T)$, add the edge $xy$ to $T$ if and only if there is an edge of $G$ between $B_x$ and $B_y$. So $(B_x:x\in V(T))$ is a $T$-partition of $G$. 
			
			Consider a node $x\in V(T)$ with $B_x\subseteq V_i$ with $i\geq 1$. Let $S$ be the set of vertices in $V_{i-1}$ adjacent to at least one vertex in $B_x$. By construction, $G[B_x]$ is connected. So there is a component $C$ of $G[V_i\cup V_{i+1}\cup \dots]$ containing $B_x$. Every vertex that is not in $C$ and is adjacent to a vertex in $C$ is in $S$. So $S$ separates $C$ from $\{r\}$. Moreover, for each vertex $v\in S$, there is a neighbour $w$ of $v$ in $B_x$, and there is a path from $v$ to $r$ internally disjoint from $S$. Thus $S$ is a minimal set separating $V(C)$ from $\{r\}$. By \cref{TriangulationSeparator}, $G[S]$ is connected. Since $S\subseteq V_{i-1}$ there is a node $y$ in $T$ with $S\subseteq B_y$. By the construction of $T$, $x$ has exactly one neighbour $y$ in $T$ with $B_y\subseteq V_{i-1}$. Thus $T$ is a tree. Consider $T$ to be rooted at the node corresponding to part $\{r\}$ (which exists, by the $i=0$ case). For $i\geq 1$, since every vertex in $V_i$ has a neighbour in $V_{i-1}$, the $T$-partition $(B_x:x\in V(T))$ is parent-dominated.
			
			Suppose that $\dist_G(v,w)\geq 3k$ for some $v,w$ in one part $B_{x_i}$ with $B_{x_i} \subseteq V_i$. By the triangle-inequality, $\dist_G(v,w) \leq \dist_G(r,v)+\dist_G(r,w)=2i$, implying $2i\geq 3k$. Let 
			$x_0,x_1,\dots,x_i\in V(T)$ where $x_{i-1}$ is the parent of $x_i$ in $T$. So $B_{x_j}\subseteq V_j$ for each $j\in\{0,\dots,i\}$. Abbreviate $B_{x_j}$ by $B_j$. 
			
			We claim that there are $k+1$ pairwise disjoint paths in $G$ between $B_i$ and $B_{i-k}$. If not, by Menger's Theorem, there is a minimal set $S\subseteq V(G)$ separating $B_i$ and $B_{i-k}$ with $|S|\leq k$. By \cref{TriangulationSeparator}, $G[S]$ is connected. Now 
			\begin{align*}
				3k\leq \dist_G(v,w)
				& \leq \dist_G(v,S)+\dist_G(w,S)+|S|-1 \\
				& \leq
				\dist_G(v,B_{i-k})+\dist_G(w,B_{i-k})+|S|-1 \\
				& \leq 3k-1,
			\end{align*}
			which is a contradiction. Hence there are pairwise disjoint paths $Q_1,\dots,Q_{k+1}$ between $B_i$ and $B_{i-k}$. 
			Let $\beta:= \{Q_a\cup B_b: a\in\{1,\dots,k+1\}, b\in\{i-k,\dots,i\}\}$. 
			By construction, each of $G[B_{i-k}],\dots,G[B_i]$ is connected, and each such path $Q_a$ must intersect each of $B_{i-k},\dots,B_i$. Thus $\beta$ is a bramble. 
			If $S\subseteq V(G)$ and $|S|\leq k$, then $S\cap (Q_a\cup P_b)=\emptyset$ for some $a\in\{1,\dots,k+1\}$ and $b\in\{i-k,\dots,i\}$, implying $S$ is not a hitting set for $\beta$. Hence the order of $\beta$ is at least $k+1$, and $\bn(G)\geq k+1$. By \cref{TreewidthDuality}, $\tw(G)\geq k$, which is a contradiction.
			
			Hence
			$\dist_G(v,w)\leq 3k-1=3(\tw(G)+1)-1=3\tw(G)+2$ for all $v,w$ in any one part $B_x$. Therefore, the $T$-partition $(B_x:x\in V(T))$ has weak-diameter at most $3\tw(G)+2$. 
		\end{proof}
			

		\section{Seeds}
		\label{sec:FindingSeeds}
		
		A \defn{seed} of a connected graph $G$ is an ordered pair $(S,f)$, where $S$ is a nonempty subset of $V(G)$ and $f$ is a function that assigns a non-negative integer to each vertex in $S$ such that for every pair of vertices $v,w\in S$, $|f(v)-f(w)|\geq \dist_G(v,w)$. 
		
		Given a seed $(S,f)$ of a connected graph $G$,
		we define $\mathcal{L}_{G,(S,f)}$ to be the layering $(L_0,L_1,\dots)$ where $L_0:=\{s\in S:f(s)=0\}$ and $L_i:=\left(\{s\in S:f(s)=i\}\cup \bigcup \{N(v):v\in L_{i-1}\}\right)\setminus \bigcup_{j\in [i]}L_{j-1}$ for every integer $i\geq 1$. 
		For each $v\in V(G)$, we denote by $\ell_{G,(S,f)}$ the integer $i$ such that $v\in L_i$.
		Note that $\ell_{G,(S,f)}=\min\{\dist(v,s)+f(s):s\in S\}$.
		To see that this is a layering, note that if a vertex $v$ has a neighbour $w$ such that $\ell_{G,(S,f)}(w)=i$, then there is some $s\in S$ such that $i=f(s)+\dist(w,s)$, which means $f(s)+\dist(v,s)\leq i+1$ and so $\ell_{G,(S,f)}(v)\leq i+1$.
		By the same argument, $\ell_{G,(S,f)}(w)\leq \ell_{G,(S,f)}(v)+1$.
		Note that every layering $(L_0,L_1,\dots)$ of $G$ can be described this way by setting $S=V(G)$ and choosing $f$ so that $f(v)=i$ for each integer $i\geq 0$ and each $v\in L_i$. 
		

		We say that a spanning forest $F$ of $G$ is \defn{$(S,f)$-seeded} if each component of $F$ contains exactly one vertex of $S$, and for each vertex $v\in V(G)\setminus S$ there is exactly one vertex $w$ such that $v\in N_F(w)$ and $\ell_{G,(S,f)}(w)=\ell_{G,(S,f)}(v)-1$. 
		We call $w$ the \defn{parent} of $v$ in $F$ and we call $S$ the set of \defn{root vertices} of $F$.

		\begin{lem}
			\label{GenerateTreeDecomp}
			Let $(S,f)$ be a seed for a plane triangulation $G$, and let $F$ be an $(S,f)$-seeded spanning forest of $G$.
			If $T$ is a spanning tree of $G$ that contains $F$ as a subgraph, and for every edge of $vw$ of $G$ the path $T_{v,w}$ intersects at most $t$ components of $F$, then each bag of the tree-decomposition $\TT$ of $G$ generated by $T$ contains at most $4t$ vertices of each layer of $\mathcal{L}_{G,(S,f)}$.
		\end{lem}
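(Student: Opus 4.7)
The plan is to unpack the definition of the generated tree-decomposition and reduce the bound on $|B_f \cap L_i|$ to a statement about single paths in $T$. By \cref{GenerateFromSpanningTree}, every bag has the form $B_f = V(T_{v,w}) \cup V(T_{w,z})$ for some face $vwz$ of $G$, so it suffices to prove that \emph{any} path $Q$ in $T$ contains at most $2t$ vertices of each layer $L_i$ of $\mathcal{L}_{G,(S,f)}$; the bound $4t$ will then follow from the union of the two paths comprising $B_f$.

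The key observation is that each component $C$ of $F$ is a subtree of $T$ (because $F \subseteq T$ and $C$ is connected). Hence, for any path $Q$ in $T$, the intersection $V(Q) \cap V(C)$ induces a subpath of $Q$: if $x,y \in V(C) \cap V(Q)$, then the unique $xy$-path in $T$ lies in $C$ (since $C$ is a subtree) and is a subpath of $Q$ (since $T$ is a tree and $Q$ is a path in $T$), so every vertex between $x$ and $y$ on $Q$ belongs to $C$. Since $Q$ meets at most $t$ components of $F$, I can partition $V(Q)$ into at most $t$ subpaths $Q_1,\dots,Q_t$, each contained in a single component of $F$.

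The second step is to show each such subpath $Q_j$ hits each layer in at most 2 vertices. Within a component $C$ of $F$ rooted at its (unique) vertex $s \in S$, the definition of an $(S,f)$-seeded forest tells us that for every non-root $v \in V(C)$ the parent of $v$ has layer index exactly $\ell_{G,(S,f)}(v) - 1$. Thus, along any path in $C$, moving toward $s$ strictly decreases $\ell$ by $1$, and moving away from $s$ strictly increases $\ell$ by $1$. Any path $Q_j \subseteq C$ first ascends from one endpoint to the lowest common ancestor of its endpoints in $C$ and then descends, so its sequence of layer indices is unimodal (strictly decreases, reaches a minimum, then strictly increases). Consequently $Q_j$ has at most two vertices in any single layer.

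Combining these two steps, any path $Q$ in $T$ contains at most $2t$ vertices of each layer, and since $B_f$ is the union of the two paths $T_{v,w}$ and $T_{w,z}$, we get $|B_f \cap L_i| \leq 4t$ as required. The only mild subtlety — that $V(Q) \cap V(C)$ is really a subpath and not just an arbitrary set — is handled transparently by the subtree-intersection observation above, so I do not expect any serious obstacle in writing this up.
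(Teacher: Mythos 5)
Your proof is correct and takes essentially the same route as the paper: decompose $B_f=V(T_{v,w})\cup V(T_{w,z})$, observe that a path in $T$ restricted to a component $C$ of $F$ is a subpath of $T$ lying in $C$, and use the fact that every $F$-edge joins a vertex to a parent one layer lower so that any path in $C$ has a unimodal layer-index sequence and hence at most two vertices per layer. One small wording point: you write ``it suffices to prove that \emph{any} path $Q$ in $T$ contains at most $2t$ vertices of each layer,'' but an arbitrary path in $T$ could meet many more than $t$ components; the hypothesis only controls paths of the form $T_{v,w}$ with $vw\in E(G)$. Since the bags are unions of exactly two such paths (the sides $vw$ and $wz$ of a face of the triangulation), this is all your argument uses, so the proof goes through — just phrase the reduction as ``for each edge $vw$ of $G$, the path $T_{v,w}$ contains at most $2t$ vertices of each layer.'' Your per-path accounting ($\le 2t$ per path, hence $\le 4t$ per bag) is in fact slightly cleaner than the paper's per-component phrasing.
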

		\begin{proof}
			We will show that for each bag $B_f$ of $\TT$ and each component $T'$ of $F$ we have that $B_f\cap V(T')$ contains at most $2$ vertices of each layer of $\mathcal{L}_{G,(S,f)}$, which will immediately imply the result.
			Recall that $B_f$ is indexed by a face $f$ of $G$, and that $B_f=V(T_{v,w})\cup V(T_{w,z})$, where $v,w$ and $z$ are the three vertices of $G$ incident to $f$. 
			Note that for any path $P$ in $T$ and any pair of vertices $v',w'\in V(P)\cap V(T')$, we have $P_{v',w'}=T_{v',w'}=T'_{v',w'}$.
			Thus $V(P)\cap V(T')$ is a connected subgraph of $P$, and is therefore a path.
			By the definition of $F$, there is a unique vertex $s$ in $T'$ which minimises $\ell_{G,(S,f)}(s)$, and $s\in S$.
			We say that a path $v_0v_1\dots v_\ell$ in $T'$ is \emph{ascending} if $\ell_{G,(S,f)}(v_i)=1+\ell_{G,(S,f)}(v_{i-1})$ for each $i\in [\ell]$.
			It follows inductively from the definition of $F$ that for each $v\in V(T')$ there is an ascending path from $v$ to $s$. 
			Now, given a path $P$ in $T'$ with end-vertices $v'$ and $w'$, $P$ is contained in the union of an ascending path from $v'$ to $s$ and an ascending path from $w'$ to $s$.
			It follows that $V(T_{v,w})\cap V(T')$ contains at most two vertices of each layer, and likewise $V(T_{w,z})\cap V(T')$ contains at most two vertices of each layer.
			Thus, in total $B_f\cap V(T')$ contains at most four vertices of each layer, as required.
		\end{proof}
		
		For a tree $T$ rooted at vertex $r$, a path $P$ in $T$ is  \defn{upward} if $P$ is a subpath of $T_{v,r}$ for some $v\in V(T)$.

		\begin{lem}
			\label{lem:upwardpaths}
			For any integers $n,k,\tau\geq 1$, for any $n$-vertex tree $T$ with root vertex $r$, there is a collection $\mathcal{U}$ of pairwise edge-disjoint upward paths in $T$ such that:
			\begin{enumerate}
				\item\label{item:pathlengths} the length $0$ path $r$ is in $\mathcal{U}$, and every other path in $\mathcal{U}$ has length exactly $k$,
				\item\label{item:depth}    For every vertex $v\in V(T)$, the path $T_{r,v}$ has at most $(3\tau(n/k)^{1/\tau}+1)k +c_vk$ edges, where $c_v$ is the number paths in $\mathcal{U}\setminus \{r\}$ which are properly contained in $T_{r,v}$, and
				\item\label{item:iterations} for every upward path $P$ in $T$ which intersects at least $\tau+1$ paths in $\mathcal{U}$, there is a path $U\in \mathcal{U}$ such $P$ contains the vertex of $U$ which is furthest from $r$.    
			\end{enumerate}    
		\end{lem}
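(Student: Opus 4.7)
I plan to construct $\mathcal{U}$ by an iterative procedure in $\tau$ phases. Set $B := \ceil{(n/k)^{1/\tau}}$ and $\mathcal{U}_0 := \{r\}$. In phase $i \in \{1, \ldots, \tau\}$, I add a set $\mathcal{U}^{(i)}$ of pairwise edge-disjoint length-$k$ upward paths arranged so that each new path has its top vertex on a previously added path; the final output is $\mathcal{U} := \bigcup_{i=0}^{\tau} \mathcal{U}^{(i)}$. Concretely, for phase $i$: perform a depth-first traversal of $T$ rooted at $r$, maintaining for the current DFS vertex $v$ a pointer to its nearest strict ancestor $\mu$ that lies on some path in $\mathcal{U}_{i-1} \cup \mathcal{U}^{(i)}$. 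Whenever $\dist_T(\mu, v)$ exceeds a phase-specific threshold $D_i$ (a geometrically decreasing sequence whose total is $O(\tau Bk)$), add to $\mathcal{U}^{(i)}$ the length-$k$ upward path with top $\mu$ descending $k$ steps along $T_{\mu, v}$; its bottom (a new \emph{marker}) becomes the current nearest on-path ancestor for subsequent DFS vertices in this subtree.

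Condition 1 and edge-disjointness follow by construction, since each new path occupies the next $k$ edges past an existing marker along the current DFS chain. For Condition 2, after phase $\tau$ every vertex $v$ has a marker-ancestor within $T$-distance $D_\tau$; moreover the markers along $T_{r,v}$ correspond bijectively to the length-$k$ paths in $\mathcal{U} \setminus \{r\}$ that are properly contained in $T_{r,v}$, of which there are $c_v$. Since each successive marker lies exactly $k$ below its parent in the marker tree (the top of its associated path), the deepest such marker on $T_{r,v}$ has depth $kc_v$. Summing with the residual distance from that marker to $v$ and the cumulative phase-transition gaps yields $|T_{r,v}| \leq 3\tau Bk + k + c_v k$, matching the claimed bound.

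For Condition 3, I argue by induction on $i$ that any upward path $P$ intersecting more than $i$ paths of $\mathcal{U}_i$ contains the bottom (furthest-from-$r$ vertex) of some $U \in \mathcal{U}_i$. The base case $i = 0$ is trivial since $\mathcal{U}_0 = \{r\}$ has bottom $r$. The inductive step hinges on the following \emph{phase property}: if $P$ intersects at least two paths in $\mathcal{U}^{(i)}$, then $P$ contains the bottom of one of them. Given the phase property, if $P$ intersects more than $i$ paths of $\mathcal{U}_i$, then either at least two lie in $\mathcal{U}^{(i)}$ (phase property gives a bottom in $P$) or more than $i-1$ lie in $\mathcal{U}_{i-1}$ (induction gives a bottom in $P$). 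Taking $i = \tau$ yields Condition 3.

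The main obstacle is proving the phase property. Suppose two edge-disjoint phase-$i$ paths $U_1, U_2$ both intersect an upward $P$ with neither of their bottoms lying on $P$. A case analysis on the positions of the $U_j$'s endpoints relative to $P$'s endpoints, combined with edge-disjointness and the DFS-driven rule anchoring phase-$i$ paths to earlier-added markers, should force a contradiction. In particular, edge-disjointness prevents the two intersections $V(U_j) \cap V(P)$ from both containing the edge just above $P$'s bottom, and the DFS order forces the later-added of $U_1, U_2$ to be anchored below the earlier one along any shared chain, placing one of their bottoms on $P$. Making this analysis precise, and possibly refining the construction (for example, by a careful choice of DFS child-order or by discarding redundant markers to enforce the ``one anchor per marker'' rule), is the technical crux of the proof; once the phase property is in hand the remaining bookkeeping for the depth bound and the pigeonhole argument is routine.
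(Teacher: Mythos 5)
There is a genuine gap, and you explicitly flag it yourself: you never prove the ``phase property'' on which your entire argument for Condition~3 rests, describing it as ``the technical crux of the proof'' that you haven't made precise. Unfortunately, the phase property as stated appears to be \emph{false} for the DFS-based construction you describe. Consider a root chain $a_0=r,a_1,\ldots,a_m$ branching at $a_m$ into two long chains $b_1,b_1',\ldots$ and $b_2,b_2',\ldots$, with $m$ chosen so that the last marker on the $a$-chain sits at $a_{j'k}$ with $j'k<m<(j'+1)k$. When the DFS descends into the $b_1$-subtree it eventually adds a length-$k$ path $U'$ with top $a_{j'k}$, which passes through $a_m$ and into the $b_1$-chain, so its bottom lives in the $b_1$-subtree. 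After the DFS backtracks to $a_m$ and descends into the $b_2$-subtree, the nearest on-path ancestor of any $b_2$-descendant is now $a_m$ (since $a_m\in V(U')$), so the next path $U''$ added there has top $a_m$ and bottom $k$ steps into the $b_2$-chain. Now the upward path $P=a_{j'k+1}\cdots a_m b_2'$ intersects both $U'$ and $U''$, yet the bottom of $U'$ lies in the $b_1$-subtree and the bottom of $U''$ lies strictly below $P$ (for $k\geq 2$), so $P$ contains neither bottom. Thus the phase property fails, and your induction for Condition~3 breaks down. The issue is structural: nothing in a threshold-driven DFS prevents two same-phase paths from entering different child subtrees through a common vertex $a_m$, which is exactly what lets $P$ thread past both without catching a bottom.

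For comparison, the paper's proof avoids this pitfall entirely by abandoning the DFS/marker viewpoint. It assigns each vertex a weight $w(v)$ (its number of descendants), partitions $V(T)$ into $\tau+1$ geometric weight scales $S_0,\ldots,S_\tau$, and defines a static edge set $E^*$ consisting of parent--child edges along which the weight does not drop too fast relative to the current scale. Components of $(V(T),E^*)$ are upward paths, and $\mathcal{U}$ is formed by chopping these components into length-$k$ segments. The branching scenario above is then blocked automatically: at a branch like $a_m$, the weight drops sharply into at least one of the subtrees, so not both $a_mb_1$ and $a_mb_2$ can be in $E^*$, which prevents two $\mathcal{U}$-paths from straddling the branch in the problematic way. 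Condition~3 is then proved by a clean monovariant argument: for each path $U_i$ that $P$ meets without containing its bottom, the vertex $v_i$ where $P$ exits $U_i$ has its $E^*$-child inside $U_i$ but its $P$-child outside $E^*$, forcing $v_i$ and that $P$-child to lie in different scales. Pigeonhole over the $\tau$ scales then caps the number of such $U_i$ at $\tau$. Your depth-bound bookkeeping for Condition~2 is also shakier than the paper's (which bounds, scale by scale, the number of $T_{r,v}$-edges outside $E^*$), but the unproven and apparently false phase property is the decisive gap.
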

		
		\begin{proof}
			For each node $v\in V(T)$, let $w(v)$ be the number of descendents of $v$. We call $w(v)$ the \defn{weight} of $v$.
			Let $E^*$ be the set of edges $uv$ of $T$ such that $u$ is the parent of $v$ and, for some $i\in [\tau]$, we have that $k(n/k)^{i/\tau}\geq w(u)> k(n/k)^{(i-1)/\tau}$ and $w(v)>w(u)-\frac{1}{2}(n/k)^{(i-1)/\tau}$.
			Note that each component of $(V(T),E^*)$ is an upward path in $T$.
 			Let $\mathcal{C}'$ be a maximum size set of pairwise edge-disjoint paths of length $k$ in the graph $(V(T),E^*)$, such that for any $i\in [k-1]$ and any component $P$ of $(V(T),E^*)$ of length congruent to $i$ mod $k$, the $i$ edges of $P$ which are furthest from $r$ in $T$ are not in $\bigcup \{E(U):U\in \mathcal{C}\}$.
            Let $\mathcal{U}$ be the union of $\mathcal{U}'$ and the length $0$ path $r$.
			
			Clearly $\mathcal{U}$ satisfies \ref{item:pathlengths}. To see that \ref{item:depth} is satisfied, consider an arbitrary vertex $x\in V(T)$, and let $v^*$ be the vertex of $T_{r,x}$ of largest weight such that $w(v^*)\leq k(n/k)^{1/\tau}$.
			Note that $k(n/k)^{\tau/\tau}\geq w(u)$ for every $u\in V(T)$.
			Additionally, for each $uv\in E(P)$ such that $u$ is the parent of $v$, we have that $w(u)\geq w(v)+1$.
			It follows that for each $i\in [\tau]$, there are at most $2(n/k)^{(1/\tau)}$ edges $uv$ in $E(P)\setminus E^*$ such that $k(n/k)^{i/\tau}\geq w(u)> k(n/k)^{(i-1)/\tau}$, where $u$ is the parent of $v$.
			Thus, in total, the subpath of $P$ from $r$ to $v^*$ has at most $2\tau(n/k)^{(1/\tau)}$ edges not in $E^*$. Thus, the restriction of $P_{r,v^*}$ to $E^*$ has at most $1+2\tau(n/k)^{(1/\tau)}$ components. By our choice of $\mathcal{U}'$, each of these components has at most $k-1$ edges which are not in 
			$\bigcup\{E(U):U\in\mathcal{U},U\subseteq P\}$. 
			Thus, if $c_P$ is the number of paths of $\mathcal{U}\setminus \{r\}$ which are properly contained in $P$, then 
			\[|E(P)| \leq (k-1)(1+2\tau(n/k)^{(1/\tau)})+\tau(n/k)^{(1/\tau)}+k(n/k)^{1/\tau}+kc_P<(3\tau(n/k)^{1/\tau}+1)k+kc_P.\]
			
			For each $i\in [\tau]$, let $S_i$ be the set $\{v\in V(T):k(n/k)^{i/\tau}\geq w(v)> k(n/k)^{(i-1)/\tau}\}$, and let $S_0:=V(T)\setminus \bigcup\{S_i:i\in [\tau]\}$.
			Now consider an upward path $P$ which intersects more than $\tau$ paths in $\mathcal{U}$.
			For each $i\in [\tau+1]$ let $U_i$ be a path in $\mathcal{U}$ which intersects $P$ and let $v_i$ be the vertex in $V(P)\cap V(U_i)$ which is furthest from $r$, ordered so that $w(v_i)>w(v_{i+1})$ for all $i\in [\tau]$.
			Note that for each $i\in [\tau]$, $P$ contains a child of $v_i$ which is not in $U_i$.
			We may assume that for each $i\in [\tau]$, $v_i$ is not the furthest vertex of $U_i$ from $r$.
			Now, since for each $i\in [\tau]$ $v_i$ has a child in $U_i$ and all edges of $U$ are in $E^*$, there is a function $f:[\tau]\to [\tau]$ such that $v_i$ is in $S_{f(i)}$ and the child of $v_i$ in $P$ is not in $S_{f(i)}$.
			In particular, $f$ must be the order reversing bijection, so $f(i)=\tau+1-i$ for all $i\in [t]$.
			But now $v_{\tau+1}\in S_0$, which means $v_{\tau+1}$ must be the vertex of $U_{\tau+1}$ which is furthest from $r$.
		\end{proof}
		
		\begin{lem}
			\label{GenerateSeeds}
			For any integer $\tau\geq 1$, any planar triangulation $G$  has a seed $(S,f)$, a spanning tree $T'$, and an $(S,f)$-seeded spanning forest $F\subseteq T'$, such that:
			\begin{enumerate}
				\item $|\mathcal{L}_{G,(S,f)}|\leq (3\tau(|V(G)|/(3\tw(G)+3))^{1/\tau}+2)(3\tw(G)+3)$, and
				\item for each edge $vw$ of $G$, the path $T'_{v,w}$ contains vertices from at most $\tau+2$ components of $F$.
			\end{enumerate}
		\end{lem}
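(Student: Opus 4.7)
My plan is to construct the seed, forest, and spanning tree from a tree-partition of $G$ with small weak-diameter, together with the collection of upward paths in the underlying tree supplied by \cref{lem:upwardpaths}. First, fix $r_0 \in V(G)$ and apply \cref{lem:twdiamparts} to obtain a parent-dominated tree-partition $(B_x : x \in V(T))$ with $T$ rooted at $r$, $B_r = \{r_0\}$, and both width and weak-diameter at most $W := 3\tw(G)+2$. Set $k := W + 1 = 3\tw(G)+3$; since parts are non-empty, $|V(T)| \le |V(G)|$. Apply \cref{lem:upwardpaths} to $T$ with parameters $k$ and $\tau$ to produce the family $\mathcal{U}$ of pairwise edge-disjoint upward paths.

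The seed $(S,f)$ and layering will be built iteratively, following the delayed root-introduction strategy foreshadowed in the introduction. Initialise $S_0 := \{r_0\}$ with $f(r_0) := 0$ and $L_0 := \{r_0\}$. At step $j \ge 1$, form $N_j$ from the neighbours of $L_{j-1}$ not previously placed, and then inspect each non-trivial path $U \in \mathcal{U}$ whose ``parent path'' $U^p$ (the path of $\mathcal{U}$ immediately above $U$ in $T$) has already received its seed. If the BFS from $s_{U^p}$ is about to enter the part of the tree-partition at the top endpoint of $U$, introduce a new seed $s_U$ in that part and set $f(s_U) := j$. The trigger step $j$ is chosen to saturate the seed condition $|f(s_U) - f(s_{U^p})| = \dist_G(s_U, s_{U^p})$, ensuring validity and keeping successive $f$-values along any chain in $\mathcal{U}$ spaced by roughly $k$. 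Finally, set $L_j := N_j \cup S_j$ and take $S := \bigcup_j S_j$.

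Build $F$ by picking, for each $v \in V(G) \setminus S$, an arbitrary neighbour at layer $\ell_{G,(S,f)}(v)-1$ as its parent; this is an $(S,f)$-seeded spanning forest by construction. Extend $F$ to a spanning tree $T'$ of $G$ by adding, for each edge of $T$ that is not interior to some path of $\mathcal{U}$, one $G$-edge bridging the two $F$-components on either side. For the layer bound, fix $v \in B_{x_v}$ and let $c_v$ count the non-trivial paths of $\mathcal{U}$ properly contained in $T_{r,x_v}$. By item~\ref{item:depth} of \cref{lem:upwardpaths}, the path $T_{r,x_v}$ has length at most $(M-1)k + c_v k$ with $M := 3\tau(|V(G)|/k)^{1/\tau}+2$. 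Walking upward from $v$ via the parent-dominated property, one step per tree-partition level and then at most $W$ more steps inside a single part by weak-diameter, reaches the seed $s_{U^*}$ of the deepest path $U^*$ of $\mathcal{U}$ on $T_{r,x_v}$ (or $r_0$ if $c_v=0$); the calibrated $f(s_{U^*})$ absorbs the traversal of the $c_v$ deeper paths, yielding $\ell_{G,(S,f)}(v) \le Mk$. For the crossing bound, an edge $vw \in E(G)$ has $x_v, x_w$ equal or adjacent in $T$, and the $T'$-path from $v$ to $w$ crosses $F$-components only along the inserted bridges, which lie along a short upward excursion in $T$. Item~\ref{item:iterations} of \cref{lem:upwardpaths} limits the number of paths of $\mathcal{U}$ intersected by this upward excursion to $\tau$, giving at most $\tau + 2$ components crossed in total.

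The main obstacle is the iterative calibration in the second step: simultaneously maintaining the seed condition (forcing each new seed to be well-separated from prior ones), timing each seed introduction so that the BFS reaches it exactly at its designated trigger layer (otherwise the layer count degrades), and arranging the components of $F$ and the bridge choices for $T'$ so that the crossing bound falls out of item~\ref{item:iterations}. In particular, the exact choice of trigger layer for each $U$ relative to its parent path controls whether the telescoping in the layer bound closes at $Mk$ or drifts by a constant, and nailing this is the delicate part of the argument.
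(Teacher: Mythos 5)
Your high-level setup matches the paper exactly — take a parent-dominated tree-partition from \cref{lem:twdiamparts}, set $k=3\tw(G)+3$, feed the underlying tree $T$ into \cref{lem:upwardpaths}, and place one seed per path of $\mathcal{U}$ in the part indexed by its bottom endpoint. But your core construction of $f$ is genuinely incomplete, and in a way that cannot be patched by ``nailing the timing.'' You define $f(s_U)$ iteratively so that the seed condition is saturated against the \emph{parent} seed $s_{U^p}$, but a seed requires the inequality $|f(v)-f(w)|$ versus $\dist_G(v,w)$ to hold for \emph{every} pair of seeds, including seeds sitting on incomparable branches of $T$ or on a common ancestor that $s_{U^p}$ does not see. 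Your proposal never checks cross-branch pairs, and a purely BFS-triggered schedule gives you no handle on them: which seed ``first reaches'' $P_{r_U}$ depends on the entire seed set so far, not just $s_{U^p}$, so the recursion is not even well-founded in the order you describe. The paper sidesteps this by giving a \emph{closed-form} $f$: contract each path $U\in\mathcal{U}$ (identify $q_U$ with $r_U$) to obtain $\hat{T}$, and set $f(s_U):=\dist_{\hat T}(r,r_U)$. This single formula makes the all-pairs verification a clean case analysis (ancestor versus non-ancestor), and it also makes the layer bound a one-line telescoping calculation from item~\ref{item:depth} of \cref{lem:upwardpaths}, rather than the ``calibrated $f$ absorbs the traversal'' hand-wave. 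You should replace the iterative calibration with this explicit contracted-tree distance.

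Your construction of $T'$ is also wrong as stated. You add one $G$-edge per edge of $T$ not interior to a path of $\mathcal{U}$, bridging ``the two $F$-components on either side.'' But an edge of $T$ does not cleanly sit between two components of $F$ (parts of the tree-partition can meet several $F$-components, and $F$-components can span several parts), and the count does not come out right: $F$ has $|\mathcal{U}|$ components, so a spanning tree needs exactly $|\mathcal{U}|-1$ added edges, whereas the number of $T$-edges outside $\bigcup\mathcal{U}$ is $|V(T)|-1-k(|\mathcal{U}|-1)$, which is generally different. The paper adds exactly one edge per $U\in\mathcal{U}\setminus\{r\}$: take $t_U$ the node of $T$ closest to $r$ whose part meets $F_U$, pick $v_U\in V(F_U)\cap P_{t_U}$, and use the parent-dominated property to connect $v_U$ to the parent part. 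Finally, your crossing-bound argument asserts that item~\ref{item:iterations} of \cref{lem:upwardpaths} ``limits the number of paths of $\mathcal{U}$ intersected by this upward excursion to $\tau$,'' but this only works after you have established how $F$-components sit relative to the tree-partition; the paper needs three nontrivial claims (that $P_{r_U}\subseteq V(F_U)$, that $F_U$ avoids the part above $q_U$, and that $F_U$ lives on an upward stretch of $T$ free of other $r_{U'}$'s) before the bound $\tau+2$ falls out. These claims are where the real work lies, and they are absent from your proposal.
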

		
		\begin{proof}
			By \cref{lem:twdiamparts}, for an arbitrary vertex $v_0\in V(G)$, there is a tree $T$ rooted at node $r\in V(T)$, and $G$ has a parent-dominated $T$-partition $\PP=(P_x:x\in V(T))$ with weak-diameter at most $3\tw(G)+2$, where $P_r=\{v_0\}$. Apply \cref{lem:upwardpaths} to $T$ with $k:=3\tw(G)+3$ to obtain a set $\mathcal{U}$ of upward paths. For each $U\in \mathcal{U}$, let $r_U$ be the end-vertex of $U$ furthest from $r$ in $T$, let $q_U$ be the end-vertex of $U$ closest to $r$ in $T$, and select an arbitrary vertex $s_U\in P_{r_U}$. Let $S:=\{s_U:U\in \mathcal{U}\}$. For each $U\in \mathcal{U}$, let $\hat{U}$ be the path in $T$ from $r$ to $r_U$.
			Let $\hat{T}$ be the graph obtained from $T$ by identifying $r_U$ with $q_U$ for each $U\in \mathcal{U}$.
			Let $f:S\to \mathbb{Z}$ so that for each $U\in \mathcal{U}$, $f(s_U)=\dist_{\hat{T}}(r,r_U)$.
			Note that for each $U\in \mathcal{U}$, $f(s_U)\leq |E(\hat{U})|-kc_{\hat{U}}$, where $c_{\hat{U}}$ is the number of paths in $\mathcal{U}\setminus \{r\}$ which are properly contained in $\hat{U}$.
			Also note that for each $U\in \mathcal{U}$, $r_U$ separates $V(U)$ from $r$ in $\hat{T}$.

			
			We now verify that $(S,f)$ is a seed.
			Consider distinct paths $U$ and $U'$ in $\mathcal{U}$. 
			Since $U$ and $U'$ are edge-disjoint upward paths, we may assume without loss of generality that $|E(\hat{U})|\leq |E(\hat{U'})|$ and so $\hat{U}$ and $U'$ are edge-disjoint.
			First consider the case where $r_U$ is an ancestor of $r_{U'}$ in $T$. 
			Note that $\dist_{G}(s_{U'},P_{r_U})=\dist_T(r_{U'},r_U)$. Hence, since $\PP$ has weak-diameter at most $k-1$, we have $\dist_T(r_{U'},r_U)\leq \dist_G(s_U,s_{U'})\leq \dist_T(r_{U'},r_U)+k-1$.
			By the definition of $f$, we have $|f(s_{U'})-f(s_U)|\leq\dist_{\hat{T}}(r_U,r_{U'})\leq \dist_T(r_{U'},r_U)-|E(U')|=\dist_T(r_{U'},r_U)-k<\dist_{G}(s_U,s_{U'})$.
			
			Now consider the case where $U$ is not an ancestor of $U'$.
			We aim to show that $f(s_U)\leq f(s_{U'})+\dist_T(r_U,r_{U'})$, which will then imply that $f(s_U)\leq f(s_{U'})+\dist_G(s_U,s_{U'})$.
			Let $P_1$ be the shortest path in $\hat{T}$ from $r$ to $r_U'$, and let $P_2$ be the shortest path in $T$ from $r_U'$ to $U$. 
			Note that $P_2$ corresponds to a walk $W_2$ from $r_U'$ to $r_U$ in $\hat{T}$. Combining $P_1$ and $W_2$, we obtain a walk from $r$ to $r_U$ in $\hat{T}$ of length $f(s_{U'})+\dist_T(r_U,r_{U'})$. Thus $f(s_{U'})+\dist_T(r_U,r_{U'})\leq \dist_{\hat{T}}(r,r_U)=f(s_U)$, as required.
			
			
			Hence, $(S,f)$ is a seed for $G$.
			
			Let $F$ be an $(S,f)$ seeded spanning forest of $G$, and for each $u\in \mathcal{U}$, let $F_U$ be the component of $F$ containing $s_U$. 
			
			\begin{claim}\label{clm:rootparts}
				For all $U\in \mathcal{U}$, we have that $P_{r_U}\subseteq V(F_U)$. 
			\end{claim}
			\begin{proof}
				Suppose for contradiction that there are distinct $U$ and $U'$ in $\mathcal{U}$ such that $F_{U'}$ contains a vertex $v\in P_{r_U}$.
				This implies $\dist_G(v,s_{U'})+f(s_{U'})\leq \dist_G(v,s_{U})+f(s_{U})$.
				Since $\PP$ has weak-diameter at most $k-1$, $\dist_G(v,s_{U})+f(s_{U})\leq \dist_{\hat{T}}(r,r_{U})+k-1$.
				
				Let $P_1$ be a shortest path from $r$ to $r_U'$ in $\hat{T}$, and let $P_2$ be the path in $T$ from $r_U'$ to $r_U$.
				Observe that since $U$ and $U'$ are edge-disjoint upward paths, $P_2$ either contains all of $U$ or all of $U'$, and if $P_2$ contains $r$ then it contains $U\cup U'$. Now $P_2$ corresponds to a walk $W$ in $\hat{T}$ from $r_{U'}$ to $r_U$ in $\hat{T}$.
				Combining this with $P_1$ gives a walk $W'$ of length  $\dist_G(v,s_{U'})+f(s_{U'})$ from $r$ to $r_U$ in $\hat{T}$. Furthermore, $W'$ contains a length $k$ cycle corresponding to either $U$ or $U'$ as a subwalk, so the length of $W'$ is at least $\dist_{\hat{T}}(r,r_U)+k$. Thus $\dist_G(v,s_{U'})+f(s_{U'})>\dist_G(v,s_{U})+f(s_{U})$, a contradiction.

			\end{proof}
				
			
			\begin{claim}\label{clm:quparent}
				If $t\in V(T)$ and $U\in \mathcal{U}$ are such that $t$ is the parent of $q_U$ in $T$, then $P_t\cap V(F_U)=\emptyset$.
			\end{claim}
			\begin{proof}
				Suppose for contradiction that there is a vertex $v\in P_t\cap V(F_U)$.
				Since $F_U$ is connected and contains a vertex in $P_{r_U}$, it also contains a vertex in $P_{q_U}$, and so by Claim~\ref{clm:rootparts} we have that $q_U\notin \{r_{U'}:U'\in \mathcal{U}\setminus \{U\}\}$.
				Let $P$ be a shortest path in $\hat{T}$ from $r$ to $r_U$.
				Note that the set of edges in $T$ which correspond to edges in $E(P)$ in $\hat{T}$ induce a linear forest $L$ in $T$ whose components are paths. 
				Additionally, note that for some $U_0\in \mathcal{U}$, $T_{r_{U_0},q_U}$ is a component of $L$.
				Now $f(s_U)=|E(L)|=\dist_{\hat{T}}(r,r_{U_0})+\dist_T(r_{U_0},q_U)=f(s_{U_0})+\dist_T(r_{U_0},q_U)$.
				Since $\PP$ has weak-diameter at most $k-1$, we have $\dist_G(s_{U_0},v)\leq \dist_T(r_{U_0},t)+k-1\leq \dist_T(r_{U_0},q_U)+k$.
				Finally, we have $\dist_G(s_U,v)\geq \dist_T(r_U,t)=k+1$.
				Combining all of this together, we have $f(s_U)+\dist_G(s_U,v)=f(s_{U_0})+\dist_T(r_{U_0},q_U)+\dist_G(s_U,v)\geq f(s_{U_0})+\dist_T(r_{U_0},q_U)+k+1>f(s_{U_0})+\dist_G(s_{U_0},v)$, which is a contradiction.
			\end{proof}
			
			\begin{claim}\label{clm:central}
				For all $t\in V(T)$ and all $U\in \mathcal{U}$, if $P_t$ contains a vertex of $F_U$, then $T_{t, q_U}$ is an upward path and is disjoint from $\{r_{U'}:U'\in \mathcal{U}\}\setminus \{r_U,q_U\}$.
			\end{claim}
			\begin{proof}
				Let $t\in V(T)$ and $U\in \mathcal{U}$ be such that $P_t$ contains a vertex $v\in V(F_U)$.
				If $T_{t,q_U}$ is not an upward path, then it contains the parent $t'$ of $q_U$ in $T$.
				But then, since $F_U$ is connected, $F_U$ contains a vertex in $P_{t'}$, contradicting Claim~\ref{clm:quparent}.
				
				If $T_{t,q_U}$ contains a vertex $t'$ in $\{r_{U'}:U'\in \mathcal{U}\}\setminus \{r_U,q_U\}$, then $t'$ is also in $T_{t,r_U}$. 
				Again, since $F_U$ is connected, this means $F_U$ contains a vertex in $t'$, which contradicts Claim~\ref{clm:rootparts}.
				
				Thus $T_{t, q_U}$ is an upward path and is disjoint from $\{r_{U'}:U'\in \mathcal{U}\}\setminus \{r_U,q_U\}$, as required.
			\end{proof}
			
			We construct a spanning tree $T'$ from $F$ as follows.
			For each $U\in \mathcal{U}\setminus \{r\}$, let $t_U$ be the closest vertex in $T$ to $r$ such that $V(F_U)\cap P_{t_U}$ is non-empty, and let $v_U\in V(F_U)\cap P_{t_U}$.
			Since $\PP$ is parent-dominated, $v_U$ has a neighbour $v'_U$ in the part of $\PP$ indexed by the parent of $t_U$.
			Pick such a $v'_U$ arbitrarily, and add the edge $v_Uv'_U$ to $T'$.
			
			Now, consider an edge $vv'$ of $G$ which is not in $T'$.
			There are vertices $t$ and $t'$ in $T$ such that $t'$ is the parent of $t$ and $\{v,v'\}\subseteq P_t\cup P_{t'}$.
			Let $r'=t'$ if $t'\in \{r_U:U\in \mathcal{U}\}$ and otherwise let $r'$ be the unique ancestor of $t'$ such that $r'\in \{r_U:U\in \mathcal{U}\}$ and the upward path $T_{t,r'}$ is internally disjoint from $\{r_U:U\in \mathcal{U}\}$.
			Let $\hat{\mathcal{U}}$ be the set of paths $U\in \mathcal{U}$ such that $V(U)\cap V(T_{t,r'})$ and $V(F_U)\cap \bigcup \{P_x:x\in V(T_{t,r'})\}$ are both non-empty.
			By the third property in \cref{lem:upwardpaths}, we have that $|\hat{\mathcal{U}}|\leq \tau+2$.
			
			From Claims~\ref{clm:rootparts} and~\ref{clm:central}, we see that $\bigcup \{P_x:x\in V(T_{t,r'})\}\subseteq \bigcup \{V(F_U):U\in \hat{\mathcal{U}}\}$, and in particular that $P_{r'}$ is contained in a single component of $F$.
			To complete the proof, we will show that $T'[\bigcup \{V(F_U):U\in \hat{\mathcal{U}}]$ is connected, thus demonstrating that $T'_{v,v'}$ intersects at most $\tau+2$ components of $F$.
			
			Let $U_0\in \hat{\mathcal{U}}$ be the path such that $P_{r'}\subseteq V(F_{U_0})$.
			For each $U\in \hat{\mathcal{U}}\setminus U_0$, note that since $T_{t,r'}$ is an upward path and $V(U)\cap V(T_{t,r'})$ and $V(F_U)\cap \bigcup \{P_x:x\in V(T_{t,r'})\}$ are both non-empty, $T_{t,r'}$ contains $t_U$.
			For each $i\in [\dist_T(t,r')]\cup \{0\}$, let $\mathcal{F}_i$ be the set $\{U\in \hat{\mathcal{U}}:\dist_T(t_U,r')=i\}$.
			By construction, for each $i\in [\dist_T(t,r')]$ and each $U\in \mathcal{F}_i$, there is an edge in $T'$ joining $F_U$ to some tree in $\{F_{U'}:U'\in \mathcal{F}_{i-1}\}$.
			Since $\mathcal{F}_0=\{U_0\}$, it follows that $T[\bigcup \{V(F_U):U\in \hat{\mathcal{U}}]$ is connected.
			
			Now consider a vertex $v\in V(G)$, and let $t\in V(T)$ be such that $v\in P_t$.
			Let $Q$ be the upward path $T_{r,t}$, and let $U$ be the path in $\mathcal{U}$ such that $r_U\in V(Q)$ and $T_{r_U,t}$ contains no vertex in $\{r_{U'}:U'\in \mathcal{U}\}\setminus \{r_U\}$.
			Let $c_Q$ be the number of paths in $\mathcal{U}\setminus \{r\}$ which are properly contained in $Q$.
			By the second property of \cref{lem:upwardpaths}, we have $|E(Q)|\leq (3\tau(|V(T)|/k)^{1/\tau}+1)k+kc_Q$.
			By our choice of $U$, there are $c_Q$ paths in $\mathcal{U}\setminus \{r\}$ which are properly contained in $T_{r,r_U}$.
			Now, since $\PP$ has weak-diameter at most $k-1$, we have $f(s_U)+\dist_G(s_U,v)\leq \dist_{\hat{T}}(r,r_U)+(k-1)+\dist_T(r_U,t)\leq \dist_T(r,r_U)-kc_Q+(k-1)+\dist_T(r_U,t)=|E(Q)|-kc_Q+k-1\leq (3\tau(|V(T)|/k)^{1/\tau}+2)k-1$.
			Thus, $v$ is contained in one of the first $(3\tau(|V(T)|/k)^{1/\tau}+2)k$ layers.
			We may assume $|V(T)|\leq |V(G)|$, and so $|\mathcal{L}_{G,(S,f)}|\leq(3\tau(|V(G)|/k)^{1/\tau}+2)k$.
		\end{proof}
		
		We now complete the proof of our main lemma, which which we restate here for convenience.
		\MainLemName*
		
		\begin{proof}
			Let $\tau:=\ceil{\frac{1}{\epsilon}}$ and $k:=\tw(G)+1$. Note that $n\geq k$ and that $\tau\leq \frac{1}{\epsilon}+1$. 
			We may assume that $k> 12+\frac{4}{\epsilon}$, since otherwise an optimal tree-decomposition of $G$ and the layering of $G$ with exactly one non-empty layer satisfy the lemma.  
			By \cref{Triangulate}, there is a planar triangulation $G'$ with $V(G')=V(G)$ and $E(G)\subseteq E(G')$ and $\tw(G')=\tw(G)$. 
			By \cref{GenerateSeeds}, $G'$ has a seed $(S,f)$, 
			a spanning tree $T'$, and 
			an $(S,f)$-seeded spanning forest $F\subseteq T'$ such that:
			\begin{enumerate}
				\item $|\mathcal{L}_{G',(S,f)}|\leq 
				(3\tau(\frac{n}{3k})^{1/\tau}+2)(3k)
				\leq
				9\tau k^{1-\epsilon}n^\epsilon +6k
				\leq
				(\frac{9}{\epsilon}+15) k^{1-\epsilon}n^\epsilon 
				$, and
				\item for every edge $vw$ of $G'$, the path $T'_{v,w}$ contains vertices from at most $\tau+2$ components of $F$.
			\end{enumerate}
			
			Let $\TT$ be the tree-decomposition of $G'$ generated by $T'$. 
			By \cref{GenerateTreeDecomp}, each bag of $\TT$ has at most $4(\tau+2)\leq 12+\frac{4}{\epsilon}$ vertices in each layer of $\mathcal{L}_{G,(S,f)}$. This completes the requirements of \cref{MainLemma}.
		\end{proof}
		
		\section{Extensions via shallow minors}
        \label{sec:Extensions}

        This section shows how to generalise our product structure theorems for planar graphs for more general `beyond planar' graph classes. For any integer $r\geq 0$, a graph $H$ is an \defn{$r$-shallow minor} of a graph $G$ if a graph isomorphic to $H$ can be obtained from $G$ by contracting pairwise-disjoint connected subgraphs of $G$, each with radius at most $r$, and then taking a subgraph. 
        For any integer $s\geq 0$, $H$ is an \defn{$(r,s)$-shallow minor} of $G$ if these connected subgraphs are rooted trees in which each non-root vertex has degree at most $s$ is at distance at most $r$ from the root.
        For a graph $L$ and integer $r\geq 1$, let $L^r$ be the \defn{$r$-power} of $L$, which is the graph with vertex-set $V(L)$, where $vw\in E(L^r)$ if and only if $\dist_L(v,w)\in\{1,\dots,r\}$. The following result of \citet[Theorem~7]{HW24}, which builds on earlier work of \citet{DMW23}, shows that product structure is inherited when taking shallow minors. 
		
		\begin{lem}[\citep{HW24}]
		\label{ShallowMinor}
		For any graphs $H$ and $L$ and integers $r,\ell,t\geq 1$, if a graph $G$ is an $r$-shallow minor of $H \boxtimes L \boxtimes K_{\ell}$ where $H$ has treewidth at most $t$ and $\Delta(L^r)\leq k$, then $G \subsetsim J \boxtimes L^{2r+1} \boxtimes K_{\ell(k+1)}$ for some graph $J$ with treewidth at most $\binom{2r+1+t}{t}-1$.
		\end{lem}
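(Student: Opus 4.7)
The plan is to unpack the $r$-shallow minor data, project to each factor of $H \StrongProd L \StrongProd K_\ell$, and then assemble the target embedding coordinate-by-coordinate. Write $M := H \StrongProd L \StrongProd K_\ell$. By hypothesis there are pairwise vertex-disjoint connected subgraphs $(X_v : v \in V(G))$ in $M$, each of radius at most $r$, such that for every $uv \in E(G)$ some edge of $M$ joins $X_u$ and $X_v$. For each $v$ I would fix a \emph{centre} $\hat{v} = (h_v, \ell_v, c_v) \in V(X_v)$ from which every vertex of $X_v$ is at $M$-distance at most $r$. Because distances in a strong product dominate distances in each factor, the coordinate projections $\pi_H(V(X_v))$ and $\pi_L(V(X_v))$ are connected subsets of $V(H)$ and $V(L)$ lying inside the radius-$r$ balls around $h_v$ and $\ell_v$. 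Composing the edge of $M$ bridging $X_u$ and $X_v$ with these two radii gives the key bounds $\dist_H(h_u, h_v) \le 2r+1$ and $\dist_L(\ell_u, \ell_v) \le 2r+1$ for each $uv \in E(G)$.

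Next I would embed $G$ into $J \StrongProd L^{2r+1} \StrongProd K_{\ell(k+1)}$ by $v \mapsto (\phi(h_v), \ell_v, c^\ast_v)$, where $\phi : V(H) \to V(J)$ is a map into an auxiliary graph $J$ and $c^\ast_v$ is chosen injectively within each fibre. The middle coordinate automatically respects $L^{2r+1}$-adjacency for edges of $G$ by the $L$-distance bound. The map $\phi$ and graph $J$ must satisfy: $(i)$ $\dist_H(h,h') \le 2r+1$ implies $\phi(h) = \phi(h')$ or $\phi(h)\phi(h') \in E(J)$, and $(ii)$ $\tw(J) \le \binom{2r+1+t}{t} - 1$. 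The third-coordinate injection is then possible provided each fibre $\{v : (\phi(h_v), \ell_v) = (j, \ell^\ast)\}$ has at most $\ell(k+1)$ elements.

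The main obstacle is the joint construction of $J$ and $\phi$ delivering $(i)$, $(ii)$, and the fibre bound. Taking $\phi$ as the identity and $J = H^{2r+1}$ satisfies $(i)$ but fails $(ii)$ in general, as powers of bounded-treewidth graphs can have unbounded treewidth (consider $H = K_{1,n}$). I would appeal to the classical bound that treewidth-$t$ graphs have weak $(2r+1)$-colouring number at most $\binom{2r+1+t}{t}$: a witnessing elimination ordering of $V(H)$ produces a chordal supergraph of width $\binom{2r+1+t}{t}-1$, and appropriate identification of vertices under $\phi$ forces the remaining distance-$(2r+1)$ adjacencies. The hypothesis $\Delta(L^r) \le k$ enters the fibre count through disjointness of the $X_v$'s: distinct $v$ contributing to the fibre at $(j, \ell^\ast)$ have centres $\hat{v} = (h_v, \ell^\ast, c_v)$ with $h_v \in \phi^{-1}(j)$ and pairwise distinct $c_v \in V(K_\ell)$, and the construction of $\phi$ can be arranged so that the $\phi$-fibres together with the ball bound $|B_L(\ell^\ast, r)| \le k+1$ yield at most $\ell(k+1)$ such centres in total.

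Once $J$, $\phi$, and the fibre bound are in hand, the assignment $c^\ast_v \in [\ell(k+1)]$ making the embedding injective on each fibre is immediate, and every edge $uv \in E(G)$ maps to an edge in $J \StrongProd L^{2r+1} \StrongProd K_{\ell(k+1)}$: first coordinate by $(i)$, second by the $L$-distance bound, third trivially since $K_{\ell(k+1)}$ is complete. This delivers the desired containment $G \subsetsim J \StrongProd L^{2r+1} \StrongProd K_{\ell(k+1)}$ with $\tw(J) \le \binom{2r+1+t}{t} - 1$.
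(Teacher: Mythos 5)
This lemma is cited in the paper from \citet{HW24} and is not proved here, so there is no in-paper argument to compare against. Evaluating the proposal on its own terms, the overall frame (fix centres of the branch sets, project onto factors, derive the $2r{+}1$ distance bounds, then reduce the $H$-coordinate to a bounded-treewidth graph $J$) is sensible, and the distance bounds $\dist_H(h_u,h_v)\leq 2r+1$ and $\dist_L(\ell_u,\ell_v)\leq 2r+1$ are correctly derived. However, there are two genuine gaps, both in the part you yourself flag as ``the main obstacle.''

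First, the pair $(J,\phi)$ is never actually constructed, and the proposed route does not obviously exist. You want a quotient map $\phi\colon V(H)\to V(J)$ so that $\dist_H(h,h')\leq 2r+1$ implies $\phi(h)\phi(h')\in E(J)$ or $\phi(h)=\phi(h')$, with $\tw(J)\leq\binom{2r+1+t}{t}-1$. With $\phi$ the identity, $J$ must contain $H^{2r+1}$, whose treewidth can be arbitrarily large even for $t=1$ (e.g.\ $H=K_{1,n}$ gives $H^{3}=K_{n+1}$), so some identification is required. But the weak colouring number bound $\mathrm{wcol}_{2r+1}(H)\leq\binom{2r+1+t}{t}$ supplies a linear order with small back-reach, not a vertex partition whose quotient graph has bounded treewidth and absorbs all distance-$(2r{+}1)$ adjacencies; the sentence ``appropriate identification of vertices under $\phi$ forces the remaining distance-$(2r+1)$ adjacencies'' is where the argument would need to happen and it is not there. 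The binomial $\binom{2r+1+t}{t}$ shows up in the actual proof because a tree-decomposition of $H$ of width $t$ is converted into a tree-decomposition (equivalently, an $H$-partition onto a bounded-treewidth host) of the contracted graph, with bag sizes controlled by counting how many bag-patterns a radius-$(2r{+}1)$ ball in $H$ can meet; it is not a statement about quotients of $H$ itself.

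Second, the fibre bound of $\ell(k+1)$ is not established, and the sketch as written gives the wrong count. You assert that distinct $v$ in the fibre over $(j,\ell^\ast)$ have centres $(h_v,\ell^\ast,c_v)$ with pairwise distinct $c_v\in V(K_\ell)$, but that only follows if the $h_v$ are all equal, i.e.\ if $\phi$ is injective on the relevant set — precisely the case where $J$ has unbounded treewidth. If $\phi$ identifies several $H$-vertices, the centres can differ in the $H$-coordinate while agreeing in $K_\ell$, so the fibre can have size $|\phi^{-1}(j)|\cdot\ell$, which is not bounded by $\ell(k+1)$. Moreover the hypothesis $\Delta(L^r)\leq k$, which is exactly the ingredient giving the extra factor $(k+1)=\max_b|B_{L}(b,r)|$, is mentioned but never actually invoked: the $k+1$ should arise from the observation that any branch set $X_v$ contributing to a fixed layer must occupy a vertex of $M$ whose $L$-coordinate lies in a radius-$r$ ball, and then disjointness of the $X_v$'s bounds how many can meet a fixed bag-and-layer combination. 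As written, the $k+1$ is pulled in by assertion rather than derived, and the quantity it should multiply is not $\ell$ alone but the size of the $H$-bag involved, which is where the tree-decomposition of $H$ enters.
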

		
		This result can be combined with \cref{PGPST-sqrtn} to bound the length of the path in our product structure theorems.
		
		\begin{lem}
		\label{PlanarShallowMinor}
        For any planar graph $Q$ and integers $n,d,r,\ell\geq 1$, if an $n$-vertex graph $G$ with average degree $d$ is an $r$-shallow minor of $Q\boxtimes K_\ell$, then $G \subsetsim J \boxtimes P \boxtimes K_c$ for some graph $J$ with treewidth at most $\binom{2r+4}{3}-1$, some path $P$ with $|V(P)|\leq 	\ceil{\tfrac{32+52\epsilon}{(2r+1)\epsilon} ((1+rd)n)^{(1+\epsilon)/2}}$, and some integer $c\leq (24+\frac{8}{\epsilon})\ell(2r+1)^2$.
       \end{lem}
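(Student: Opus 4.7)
The plan is to reduce the product-structure question for $G$ to one for a small-enough planar graph, then successively apply \cref{PGPST-sqrtn} and \cref{ShallowMinor}, and finally rewrite the resulting power of a path as a product of a path with a clique.

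I would first prove the auxiliary claim that $G$ is an $r$-shallow minor of $Q'\boxtimes K_\ell$ for some induced subgraph $Q'$ of $Q$ with $|V(Q')|\leq (1+rd)n$. Starting from any realisation of $G$ as an $r$-shallow minor of $Q\boxtimes K_\ell$ with branch sets $(B_v:v\in V(G))$, one may replace each $B_v$ by a subtree of a BFS tree from the centre $c_v$ of $B_v$ that contains, for every neighbour $w$ of $v$ in $G$, one chosen vertex of $B_v$ adjacent in $Q\boxtimes K_\ell$ to $B_w$. This subtree is the union of at most $\deg_G(v)$ root-to-vertex paths of length at most $r$, so $|B_v|\leq 1+r\deg_G(v)$. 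Summing gives $\big|\bigcup_v B_v\big|\leq n+r\cdot 2|E(G)|=(1+rd)n$. Taking $Q'$ to be the planar subgraph of $Q$ induced by the projection of $\bigcup_v B_v$ onto $V(Q)$ preserves the $r$-shallow-minor relation.

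Next, applying \cref{PGPST-sqrtn} to $Q'$ yields a planar graph $H$ with $\tw(H)\leq 3$, a path $P'$ with $|V(P')|\leq(\tfrac{32}{\epsilon}+52)((1+rd)n)^{(1+\epsilon)/2}$, and an integer $c'\leq 24+\tfrac{8}{\epsilon}$ such that $Q'\subsetsim H\boxtimes P'\boxtimes K_{c'}$; hence $G$ is an $r$-shallow minor of $H\boxtimes P'\boxtimes K_{c'\ell}$. Since $\Delta((P')^r)\leq 2r$, \cref{ShallowMinor} (with $L:=P'$, $t:=3$, $k:=2r$) gives $G\subsetsim J\boxtimes (P')^{2r+1}\boxtimes K_{c'\ell(2r+1)}$ for some graph $J$ with $\tw(J)\leq\binom{2r+1+3}{3}-1=\binom{2r+4}{3}-1$.

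Finally, I would group $V(P')$ into consecutive blocks of size $2r+1$: any two vertices of $P'$ at distance at most $2r+1$ lie in the same or in consecutive blocks, so $(P')^{2r+1}\subsetsim P''\boxtimes K_{2r+1}$, where $P''$ is a path on $\lceil|V(P')|/(2r+1)\rceil$ vertices. Absorbing the $K_{2r+1}$ factor into the rightmost clique produces the required decomposition $J\boxtimes P''\boxtimes K_c$ with $c\leq(24+\tfrac{8}{\epsilon})\ell(2r+1)^2$ and $|V(P'')|\leq\lceil\tfrac{32+52\epsilon}{(2r+1)\epsilon}((1+rd)n)^{(1+\epsilon)/2}\rceil$. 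The main obstacle is the auxiliary bound $|B_v|\leq 1+r\deg_G(v)$, which requires carefully choosing a \emph{minimal} realisation of $G$ as a shallow minor in which every leaf of every branch-set tree is a connector to a neighbouring branch set; once that is in place, the remaining steps are a routine concatenation of existing results and elementary estimates.
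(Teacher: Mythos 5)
Your proof is correct and follows essentially the same approach as the paper's: prune the host to size at most $(1+rd)n$ via minimality of the shallow-minor realisation, apply \cref{PGPST-sqrtn} to the pruned planar host, apply \cref{ShallowMinor} with $L=P$, $t=3$, $k=2r$, and then compress $P^{2r+1}$ into $P'\boxtimes K_{2r+1}$. The paper phrases the pruning step by taking $Q$ vertex-minimal and counting the $n$ centre vertices plus at most $2r$ internal path vertices per edge, whereas you count $1+r\deg_G(v)$ vertices per branch set; these sum to the identical bound $n+2r|E(G)|=(1+rd)n$.
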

		
		\begin{proof}
		We may assume that $Q$ is vertex-minimal such that $G$ is an $r$-shallow minor of $Q\boxtimes K_\ell$. 
        If we select a central representative vertex for each vertex of $G$ in the corresponding connected subgraph of $Q\boxtimes K_{\ell}$, then edges of $G$ naturally correspond to paths in $Q\boxtimes K_{\ell}$ with at most $2r$ internal vertices (where distinct paths are not necessarily internally disjoint).
        By the minimality of $Q$, we have $|V(Q)|\leq |V(G)|+2r|E(G)|= (1+rd)n$. By \cref{PGPST-sqrtn}, $Q\subsetsim H \StrongProd P \StrongProd K_c$ for some planar graph $H$ with $\tw(H)\leq 3$, for some path $P$ with 
        $$|V(P)|\leq 
			(\tfrac{32}{\epsilon}+52) |V(Q)|^{(1+\epsilon)/2} \leq
			(\tfrac{32}{\epsilon}+52) ((1+rd)n)^{(1+\epsilon)/2},$$ and for some integer 
		$c\leq 24+\frac{8}{\epsilon}$. Thus $G$ is an $r$-shallow minor of $H \StrongProd P \StrongProd K_{c\ell}$. So \cref{ShallowMinor} is applicable with $L=P$ and $k=2r$ and $t=3$. So $G \subsetsim J \boxtimes P^{2r+1} \boxtimes K_{c\ell(2r+1)}$ for some graph $J$ with treewidth at most $\binom{2r+4}{3}-1$.   
			Note that $P^{2r+1}\subsetsim P'\boxtimes K_{2r+1}$, where $P'$ is a path with 
			\begin{align*}
				|V(P')| \leq \ceil{ |V(P)|/(2r+1)} 
				& \leq 	\ceil{(\tfrac{32}{\epsilon}+52) ((1+rd)n)^{(1+\epsilon)/2} / (2r+1)}.
			\end{align*}
			Thus $G \subsetsim J \boxtimes P' \boxtimes K_{c\ell(2r+1)^2}$.
		\end{proof}
		
		We now give an example of \cref{PlanarShallowMinor}. \citet{KU22} introduced the following graph class (also see \citep{BG20,CPS22}). A graph $G$ is \defn{fan-planar} if it has a drawing in the plane such that for each edge $e \in  E ( G )$, the edges that cross $e$ have a common end-vertex $v$ and they cross $e$ from the same side (when directed away from $v$). \citet{HW24} proved that every fan-planar graph $G$ is contained in $H \boxtimes P \boxtimes K_{81}$ for some graph $H$ with treewidth at most $19$ and for some path $P$. We show an analogous result with $P$ short. 
		
		\begin{thm}
        \label{FanPlanarProduct}
		For any $\epsilon\in (0,\frac12)$ and integer $n\geq 1$, every $n$-vertex fan-planar graph $G$ is contained in $H \boxtimes P \boxtimes K_{c}$ for some graph $H$ with treewidth at most $19$, some path $P$ with $|V(P)|< 	(\tfrac{11}{\epsilon}+18) (11n)^{(1+\epsilon)/2}$, and some integer $c\leq
        \frac{216}{\epsilon}+648$. 
		\end{thm}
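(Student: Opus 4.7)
The plan is to invoke \cref{PlanarShallowMinor} with parameters $r=1$, $\ell=3$, and $d\leq 10$, which matches all three bounds in the statement exactly. To see that these are the right parameters, note that for $r=1$ the treewidth bound becomes $\binom{2\cdot 1+4}{3}-1 = \binom{6}{3}-1 = 19$; the clique factor becomes $c \leq (24+\tfrac{8}{\epsilon})\cdot 3\cdot 3^2 = 648+\tfrac{216}{\epsilon}$; and the factor $(1+rd)n \leq 11n$ appears inside the path-length expression.

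Two ingredients must be supplied. First, by \citet{KU22}, every $n$-vertex fan-planar graph has at most $5n-10$ edges, so its average degree satisfies $d<10$. Second, I need the structural fact that every fan-planar graph $G$ is a $1$-shallow minor of $Q\boxtimes K_3$ for some planar graph $Q$. This is implicit in the work of \citet{HW24}: combined with \cref{PGPST}(c) and \cref{ShallowMinor}, it reproduces their bound of $K_{81}$, since $\ell(2r+1)^2 = 3\cdot 9 = 27$ and, after compressing $P^3 \subsetsim P'\boxtimes K_3$, the clique factor becomes $K_{81}$. I would either cite this shallow-minor representation from \citet{HW24} or justify it directly from the definition: given a fan-planar drawing, for each vertex $v$ the crossings on each incident edge $e$ come from a fan sharing a common endpoint, so one can planarize by subdividing each crossed edge with a degree-$2$ vertex and blowing up each original vertex into a clique of size at most $3$ (one copy per fan-orientation). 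Each original vertex of $G$ is then represented by a connected subgraph of radius $1$ in $Q\boxtimes K_3$.

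With both ingredients in hand, applying \cref{PlanarShallowMinor} directly gives $G\subsetsim J\boxtimes P\boxtimes K_c$ with $\tw(J)\leq 19$, $c\leq 648+\tfrac{216}{\epsilon}$, and $|V(P)|\leq \bigl\lceil\tfrac{32+52\epsilon}{3\epsilon}(11n)^{(1+\epsilon)/2}\bigr\rceil$. To obtain the stated path-length bound, observe that $\tfrac{32}{3}<11$ and $\tfrac{52}{3}<18$, so
\[
\Bigl(\tfrac{11}{\epsilon}+18\Bigr) - \Bigl(\tfrac{32}{3\epsilon}+\tfrac{52}{3}\Bigr) = \tfrac{1}{3\epsilon}+\tfrac{2}{3}.
\]
Since $\epsilon<\tfrac12$ this gap exceeds $\tfrac43$, and multiplying by $(11n)^{(1+\epsilon)/2}\geq\sqrt{11}>3$ comfortably absorbs the $+1$ from the ceiling, giving the required strict inequality.

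The main obstacle is verifying the $1$-shallow minor representation of fan-planar graphs in $Q\boxtimes K_3$; once that is in place, the rest is a mechanical plug-in to \cref{PlanarShallowMinor} together with the edge-count of \citet{KU22}.
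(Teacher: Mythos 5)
Your proposal is correct and matches the paper's proof exactly: both apply \cref{PlanarShallowMinor} with $r=1$, $\ell=3$, $d=10$, using the edge bound of \citet{KU22} for the average degree and then verifying the same arithmetic for $\tw(J)$, $c$, and $|V(P)|$. The shallow-minor representation you hesitated over needs no rederivation — the paper cites it directly as \citet[Lemma~33]{HW24}, so you should simply cite that lemma and drop the informal planarization sketch.
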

		
		\begin{proof}
		\citet[Lemma~33]{HW24} proved that $G$ is a $1$-shallow minor of $Q \boxtimes K_3$ for some planar graph $Q$. \citet{KU22} showed that $G$ has average degree less than $10n$. So \cref{PlanarShallowMinor} is applicable with $r=1$, $d=10$ and $\ell=3$. So $G \subsetsim J \boxtimes P \boxtimes K_{c}$ for some graph $J$ with treewidth at most $\binom{6}{3}-1=19$, some path $P$ with $|V(P)|\leq 	\ceil{\tfrac{32+52\epsilon}{3\epsilon} (11n)^{(1+\epsilon)/2}}< (\tfrac{11}{\epsilon}+18) (11n)^{(1+\epsilon)/2}$, and some integer 
        $c\leq 27(24+\frac{8}{\epsilon})$.
        \end{proof}

Note that for fan-planar graphs the argument in 
the proof of \cref{PlanarShallowMinor} showing that the graph $Q$ in the proof of \cref{FanPlanarProduct} 
satisfies $|V(Q)|\leq 11n$ is due to Robert Hickingbotham~[personal communication, 2025]. 

\cref{PlanarShallowMinor} can be applied for a variety of beyond planar classes (see \citep{HW24}). For example, a graph $G$ is \defn{$k$-planar} if it has a drawing in the plane, where each edge is crossed at most $k$ times, and no three edges cross at a single point. \cref{PlanarShallowMinor} can be used to show a product structure theorem for $k$-planar graphs with $\tw(J)\in O(k^3)$ and with short paths. We now follow an approach of \citet{DHSW24} that leads to constant bounds on  $\tw(J)$, and still with short paths. The next lemma is a slight extension of Lemma~3.3 in \citep{DHSW24} including bounds on $|V(P)|$. For a graph $G$  and integer $\ell\geq 0$, a partition of $V(G)$ is \defn{$\ell$-blocking} if each part induces a connected subgraph of $G$, and every path of length greater than $\ell$ intersects some part in at least two vertices.

\begin{lem}[\citep{DHSW24}] 
\label{MinorBlockingShallowProduct} 
Let $\GG$ be a minor-closed class such that for some function $f$, non-decreasing function $\alpha$, and integers $\ell,t,c\geq 1$, 
\begin{itemize}
\item every graph \(G\in \GG\) has an $\ell$-blocking partition $\RR$ with width at most $f(\Delta(G))$;
\item  every graph $G\in\GG$ is contained in $H \boxtimes P \boxtimes K_c$  for some graph $H$ with $\tw(H)\leq t$ and for some path $P$ with $|V(P)|\leq \alpha(|V(G)|)$. 
\end{itemize}
Then there is a function $g$ such that for any  integers $r,m\geq 0$ and $d,s,n\geq 1$ and any graph $G\in\GG$, every $(r,s)$-shallow minor $G'$ of $G\boxtimes K_{d}$ with $n$ vertices and $m$ edges is contained in $J \boxtimes P \boxtimes K_{g(d,r,s,\ell,c)}$ for some graph $J$ with $\tw(J)\leq \binom{2\ell+5+t}{t}-1$ and for some path $P$ with  $|V(P)|\leq \ceil{ \alpha((2\ell+4)m + n) / (2\ell+5)}$. 
\end{lem}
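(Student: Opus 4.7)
The plan is to construct an auxiliary graph $G^\ast\in\GG$ with $|V(G^\ast)|\le (2\ell+4)m+n$ such that $G'$ is an $(\ell+2)$-shallow minor of $G^\ast\boxtimes K_{d'}$ for some integer $d'$ depending only on $(d,r,s,\ell)$, and then to invoke \cref{ShallowMinor}. Granted such a $G^\ast$, the product-structure hypothesis applied to $G^\ast$ gives $G^\ast\subsetsim H\boxtimes P\boxtimes K_c$ with $\tw(H)\le t$ and $|V(P)|\le\alpha((2\ell+4)m+n)$, so $G^\ast\boxtimes K_{d'}\subsetsim H\boxtimes P\boxtimes K_{cd'}$. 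Since $\Delta(P^{\ell+2})\le 2\ell+4$, applying \cref{ShallowMinor} with $r=\ell+2$ and $L=P$ yields $G'\subsetsim J\boxtimes P^{2\ell+5}\boxtimes K_{cd'(2\ell+5)}$ with $\tw(J)\le\binom{2\ell+5+t}{t}-1$; the standard factorization $P^{2\ell+5}\subsetsim P'\boxtimes K_{2\ell+5}$ then produces a path $P'$ with $|V(P')|\le\ceil{|V(P)|/(2\ell+5)}\le\ceil{\alpha((2\ell+4)m+n)/(2\ell+5)}$, and setting $g(d,r,s,\ell,c):=cd'(2\ell+5)^2$ absorbs the residual clique factors.

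To construct $G^\ast$, fix an $(r,s)$-shallow minor model of $G'$ in $G\boxtimes K_d$: pairwise-disjoint rooted trees $(B_v:v\in V(G'))$ of depth at most $r$ and non-root degree at most $s$, together with a distinguished bridging edge $\beta_{vw}\in E(G\boxtimes K_d)$ between $B_v$ and $B_w$ for each $vw\in E(G')$. Trim each $B_v$ to its essential subtree $B_v^\ast$, spanned by the root of $B_v$ and every attachment point of a bridging edge incident to $v$, so that $Q:=\bigcup_v B_v^\ast\cup\{\beta_{vw}:vw\in E(G')\}$ is a subgraph of $G\boxtimes K_d$ realising $G'$ as a topological minor with each root-to-root path of length at most $2r+1$. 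Now compress $Q$ iteratively using the $\ell$-blocking partition of $G$ lifted to $G\boxtimes K_d$ by $R\mapsto R\times V(K_d)$: along any root-to-leaf path in $Q$ whose $G$-projection has length greater than $\ell$, two vertices lie in the same part, and contracting them shortens the path. Iterating the contraction, with each quotient lying in $\GG$ (since $\GG$ is minor-closed) and hence admitting its own $\ell$-blocking partition, every branch tree eventually has depth at most $\ell+2$, so each root-to-root subdivision path has length at most $2(\ell+2)+1=2\ell+5$. Finally, project out the $K_d$ coordinate, absorbing the bounded multiplicity of overlapping projected branch sets into an integer $d'=d'(d,r,s,\ell)$: the resulting $G^\ast$ is a minor of $G$, hence in $\GG$, has $|V(G^\ast)|\le n+(2\ell+4)m$ (at most $2\ell+4$ internal vertices per subdivision path, plus $n$ original root vertices), and $G'$ is an $(\ell+2)$-shallow minor of $G^\ast\boxtimes K_{d'}$ obtained by splitting each subdivision path at its midpoint and assigning the two halves to the neighbouring endpoint branch sets.

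The main obstacle is the iterative compression: ensuring that every branch tree uniformly drops to depth at most $\ell+2$ in a graph that remains a minor of $G$, while controlling the multiplicity $d'$ purely in terms of $(d,r,s,\ell)$ rather than $\Delta(G)$. This is essentially the content of Lemma~3.3 of \citet{DHSW24}; the only new ingredient needed here is the per-edge size bound $|V(G^\ast)|\le n+(2\ell+4)m$, which falls out of the construction and propagates through \cref{ShallowMinor} and the path-power factorization to yield the stated bound $|V(P)|\le\ceil{\alpha((2\ell+4)m+n)/(2\ell+5)}$.
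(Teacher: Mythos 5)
Your proposal takes essentially the same route as the paper. The paper also constructs an intermediate graph $Q$ (your $G^\ast$), a minor of $G$, such that $G'$ is an $(\ell+2)$-shallow minor of $Q\boxtimes K_{h(d,r,s,\ell)}$, bounds $|V(Q)|\leq(2\ell+4)m+n$ via vertex-minimality of $Q$, applies the product-structure hypothesis to $Q$, invokes \cref{ShallowMinor} with $r=\ell+2$ and $k=2\ell+4$, and finally factors $P^{2\ell+5}\subsetsim P'\boxtimes K_{2\ell+5}$ to get the claimed path length. The one difference is that the paper simply cites Lemma~3.5 of \citet{DHSW24} for the existence of $Q$ and the lifting to depth $\ell+2$ (treating it as a black box), whereas you sketch the iterative compression via the $\ell$-blocking partitions yourself; your sketch is loose (contracting same-part vertices on root-to-leaf paths, controlling the multiplicity $d'$, and guaranteeing uniform depth $\ell+2$ all need care), but you correctly defer to \citet{DHSW24} for exactly these details and correctly isolate the new ingredient as the per-edge bound $|V(Q)|\leq(2\ell+4)m+n$ and its propagation through the path-power factorization.
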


\begin{proof} 
By Lemma~3.5 in \citep{DHSW24}, $G'$ is an $(\ell+2)$-shallow minor of  $Q \boxtimes K_{h(d,r,s,\ell)}$ for some minor $Q$ of $G$ and for some function $h$. 
Thus $Q\in \GG$. 
There is a natural map from vertices of $G$ to vertices of $Q\boxtimes K_{h(d,r,s,\ell)}$ and from edges of $G'$ to paths of $Q\boxtimes K_{h(d,r,s,\ell)}$ with at most $2\ell+4$ internal vertices which witnesses that $G'$ is an $(r,s)$-shallow minor of $G\boxtimes K_{d}$. 
We may assume that $Q$ is vertex-minimal, implying $|V(Q)|\leq (2\ell+4)m+n$. 
By assumption, $Q$ is contained in $H \boxtimes P \boxtimes K_c$  for some graph $H$ with $\tw(H)\leq t$ and for some path $P$ with $|V(P)|\leq \alpha(|V(Q)|) \leq \alpha((2\ell+4)m + n)$. Hence $G'$ is an $(\ell+2)$-shallow minor of $H \boxtimes P \boxtimes K_{c\,h(d,r,s,\ell)}$. 
By \cref{ShallowMinor} with $r=\ell+2$ and $k=2\ell+4$, 
$G'$ is contained in $ J \boxtimes P^{2\ell+5} \boxtimes K_{c(2\ell+5)\cdot h(d,r,s,\ell)}$ for some graph $J$ with $\tw(J)\leq \binom{2\ell+5+t}{t}-1$. The result follows with $g(d,r,s,\ell,c) := c(2(\ell+2)+1)^2\cdot h(d,r,s,\ell)$. 
Now $P^{2\ell+5}\subsetsim P' \boxtimes K_{2\ell+5}$ for some path $P'$ with $|V(P')|\leq \ceil{ |V(P)| / (2\ell+5)} \leq \ceil{ \alpha((2\ell+4)m + n) / (2\ell+5)}$. By construction, $G'\subsetsim J \boxtimes P' \boxtimes K_{c(2\ell+5)^2\cdot h(d,r,s,\ell)}$. 
\end{proof}
		

\citet[Lemma~3.1]{DHSW24} showed that every planar graph $G$ has a 222-blocking partition with width at most $f(\Delta(G))$ for some function $f$. \cref{PGPST-sqrtn} implies that for any $\epsilon\in (0,\frac12)$, \cref{MinorBlockingShallowProduct} is applicable for planar graphs with 
$\alpha(n)=(\frac{32}{\epsilon}+52) n^{(1+\epsilon)/2}$ and $\ell=222$ and $t=3$ and $c=24+\frac{8}{\epsilon}$. Thus\footnote{ We use the approximation $\ceil{ (\frac{32}{\epsilon}+52) (448m + n)^{(1+\epsilon)/2} /449 }\leq (\tfrac{7}{\epsilon}+13)(n+m)^{(1+\epsilon)/2}$, which is true for all values of parameters within the specified domains.}:

\begin{lem}
\label{PlanarBlockingShallowProduct}
For any $\epsilon\in (0,\frac12)$, there is a function $g_{\epsilon}$ such that for any  integers $r,m\geq 0$ and $d,s,n\geq 1$ and planar graph $G$, every $(r,s)$-shallow minor of $G\boxtimes K_{d}$ with $n$ vertices and $m$ edges is contained in $J \boxtimes P \boxtimes K_{g_{\epsilon}(d,r,s)}$ for some graph $J$ with $\tw(J)\leq \binom{452}{3}-1$ and some path $P$ with $|V(P)|\leq (\tfrac{7}{\epsilon}+13)(n+m)^{(1+\epsilon)/2}$. 
\end{lem}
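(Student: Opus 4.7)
The plan is to invoke \cref{MinorBlockingShallowProduct} with $\GG$ equal to the class of planar graphs, so the main task is just to verify the two hypotheses of that lemma with appropriate parameters, read off the conclusion, and then tidy up the stated bound on $|V(P)|$. For the blocking-partition hypothesis, I would cite \citet[Lemma~3.1]{DHSW24} as indicated in the text preceding the statement: every planar graph has a $222$-blocking partition whose width is bounded by a function of the maximum degree, so one may take $\ell:=222$. For the product-structure hypothesis, I would apply \cref{PGPST-sqrtn}, which supplies the required containment with $t := 3$, $c := 24+\tfrac{8}{\epsilon}$, and $\alpha(n) := (\tfrac{32}{\epsilon}+52)\,n^{(1+\epsilon)/2}$ (noting that $\alpha$ is non-decreasing).

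With these parameters fixed, \cref{MinorBlockingShallowProduct} immediately produces, for any $(r,s)$-shallow minor $G'$ of $G\boxtimes K_d$ with $n$ vertices and $m$ edges, a containment $G' \subsetsim J \boxtimes P \boxtimes K_{g(d,r,s,\ell,c)}$ with
\[
\tw(J) \;\leq\; \binom{2\ell+5+t}{t}-1 \;=\; \binom{452}{3}-1,
\]
which is the claimed treewidth bound. Defining $g_{\epsilon}(d,r,s) := g(d,r,s,222,\,24+\tfrac{8}{\epsilon})$ absorbs the dependence on $\epsilon$ into the clique factor, as required.

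The only remaining step is to simplify the path-length bound. \cref{MinorBlockingShallowProduct} gives
\[
|V(P)| \;\leq\; \left\lceil \frac{\alpha((2\ell+4)m + n)}{2\ell+5} \right\rceil \;=\; \left\lceil \frac{(\tfrac{32}{\epsilon}+52)(448m+n)^{(1+\epsilon)/2}}{449} \right\rceil.
\]
Using $448m+n \leq 448(n+m)$ yields $(448m+n)^{(1+\epsilon)/2} \leq 448^{(1+\epsilon)/2}(n+m)^{(1+\epsilon)/2}$, and since $\epsilon \in (0,\tfrac12)$, the constant $448^{(1+\epsilon)/2}/449$ is strictly less than one. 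Combined with a small slack to absorb the ceiling, multiplying $(\tfrac{32}{\epsilon}+52)$ by this factor yields the clean bound $(\tfrac{7}{\epsilon}+13)(n+m)^{(1+\epsilon)/2}$ claimed in the statement, exactly as asserted in the footnote.

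I do not anticipate any genuine obstacle in the proof; the statement is essentially a plug-and-chug application of the general lemma, and the only task beyond the two citations is the routine numerical estimate that turns the raw output of \cref{MinorBlockingShallowProduct} into the stated coefficients. The choice of $\ell=222$ inside $\binom{2\ell+5+t}{t}-1 = \binom{452}{3}-1$ and of $\alpha$ coming from \cref{PGPST-sqrtn} is exactly what makes the numerology line up.
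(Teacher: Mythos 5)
Your proposal matches the paper's argument exactly: the paper also cites Lemma 3.1 of \citet{DHSW24} for the $222$-blocking partitions, applies \cref{MinorBlockingShallowProduct} with $\ell=222$, $t=3$, $c=24+\tfrac{8}{\epsilon}$, and $\alpha(n)=(\tfrac{32}{\epsilon}+52)n^{(1+\epsilon)/2}$ coming from \cref{PGPST-sqrtn}, and then performs the same numerical simplification (the paper relegates the computation $\ceil{(\tfrac{32}{\epsilon}+52)(448m+n)^{(1+\epsilon)/2}/449}\leq(\tfrac{7}{\epsilon}+13)(n+m)^{(1+\epsilon)/2}$ to a footnote). Your filled-in verification of that numerical step, using $448m+n\leq 448(n+m)$ and the observation that $448^{(1+\epsilon)/2}/449<1$ on $\epsilon\in(0,\tfrac12)$ with slack to absorb the ceiling, is correct.
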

		
\begin{thm}
There is a function $g$ such that for every $\epsilon\in(0,\tfrac12)$ and integer $k\geq 0$, every $k$-planar graph with $n$ vertices is contained in $J \boxtimes P \boxtimes K_{g(\epsilon,k)}$ for some graph $J$ with $\tw(J)\leq \binom{452}{3}-1$ and for some path $P$ with  $|V(P)|\leq  (\tfrac{7}{\epsilon}+13)((5\sqrt{k}+1)n)^{(1+\epsilon)/2}$.
\end{thm}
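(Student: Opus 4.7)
The plan is to view the $k$-planar graph $G$ as an $(r,s)$-shallow minor of $Q \boxtimes K_d$ for some planar graph $Q$ and integers $r,s,d$ depending only on $k$, and then apply \cref{PlanarBlockingShallowProduct}. First I would construct such a representation by planarization. Given a $k$-planar drawing of $G$, let $Q$ be obtained by replacing each crossing with a new degree-$4$ vertex; then $Q$ is planar, and each edge $e = uv$ of $G$ corresponds to a path $P_e$ in $Q$ with at most $k$ internal vertices, one per crossing on $e$. Since every edge of $G$ is crossed by at most $k$ other edges, the edges of $G$ admit a proper $(k+1)$-colouring $c$ in which crossing edges receive distinct colours.

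Next I would route the edge-paths through distinct layers of $Q \boxtimes K_{k+1}$: route $P_e$ through the $c(e)$-th layer, so that two paths $P_e, P_{e'}$ meeting at a shared crossing vertex of $Q$ now occupy different layers and hence are internally disjoint. For each $u \in V(G)$, let $B_u$ be the union of the column $\{u\} \times V(K_{k+1})$ (viewed as a star rooted at $(u,1)$) together with the half of each routed $P_e$ lying on $u$'s side of its midpoint, for every edge $e$ incident to $u$. Each $B_u$ is then a tree of radius at most $\lceil (k+1)/2 \rceil + 1$ with maximum non-root degree at most $2$, the $B_u$ are pairwise disjoint, and for every edge $uv \in E(G)$ the two halves of $P_{uv}$ meet along a host edge between $B_u$ and $B_v$. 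This realises $G$ as an $(r,s)$-shallow minor of $Q \boxtimes K_{k+1}$ with $r = \lceil (k+1)/2 \rceil + 1$ and $s = 2$.

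The remainder is routine. By the Pach--T\'oth theorem, every $k$-planar graph with $n$ vertices has at most $5\sqrt{k}\,n$ edges (for $k=0$ the claim follows directly from \cref{PGPST-sqrtn} by absorbing constants into $g$), so $n + |E(G)| \le (5\sqrt{k}+1)n$. Applying \cref{PlanarBlockingShallowProduct} to the shallow-minor representation above yields a containment $G \subsetsim J \boxtimes P \boxtimes K_{g_{\epsilon}(k+1,\,r,\,2)}$ with $\tw(J) \le \binom{452}{3}-1$ and $|V(P)| \le (\tfrac{7}{\epsilon}+13)(n+|E(G)|)^{(1+\epsilon)/2} \le (\tfrac{7}{\epsilon}+13)((5\sqrt{k}+1)n)^{(1+\epsilon)/2}$. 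Setting $g(\epsilon,k) := g_{\epsilon}(k+1, r, 2)$ completes the proof.

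The main obstacle I anticipate is the shallow-minor construction itself: verifying that the branch sets have bounded radius and maximum non-root degree while remaining pairwise disjoint depends crucially on the $(k+1)$-edge-colouring to separate paths at shared crossings, and on interpreting the column $\{u\}\times V(K_{k+1})$ as a bounded-degree spanning star rather than an abstract clique. This construction is essentially standard for beyond-planar classes (compare the analogous fan-planar argument in \citep{HW24}), so once in place the remainder of the proof is a direct invocation of \cref{PlanarBlockingShallowProduct} combined with the edge bound.
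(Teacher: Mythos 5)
Your high-level strategy is exactly the paper's: represent the $k$-planar graph $G$ as an $(r,s)$-shallow minor of $Q \boxtimes K_d$ for a planar graph $Q$ with $r,s,d$ depending only on $k$, bound $|V(Q)|$ via the Pach--T\'oth edge count, and then invoke \cref{PlanarBlockingShallowProduct}. The paper does not rebuild the shallow-minor representation from scratch, however; it simply cites the result of \citet{HW24} that every $k$-planar graph is a $(\tfrac{k}{2},4)$-shallow minor of $Q\boxtimes K_2$ for some planar $Q$, and plugs that directly into \cref{PlanarBlockingShallowProduct} with $d=2$, $r=k/2$, $s=4$. Your plan instead reconstructs such a representation by hand, which is where the gap appears.

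The gap is in the claim that the branch sets $B_u$ have maximum non-root degree at most $2$. Your edge-colouring $c$ is a proper colouring of the \emph{crossing graph} on $E(G)$, so crossing edges receive distinct colours, but two edges incident to the same vertex $u$ that do not cross each other may share a colour. In that case, if $e,e'$ are edges at $u$ with $c(e)=c(e')=j$, the star leaf $(u,j)$ is a non-root vertex of $B_u$ of degree at least $3$: one edge to the root $(u,1)$ and one edge for each of the two half-paths hanging off it. Since $\deg_G(u)$ is unbounded, this degree can be arbitrarily large, so the construction as written is not a $(\cdot,2)$-shallow minor, and the resulting value of $s$ (hence $g$) would depend on $n$ rather than on $k$ alone. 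The issue is fixable: drop the full column from $B_u$ and route each half of $P_e$ so that it leaves $u$'s branch set directly from the root, i.e.\ begin the routed path with the host edge $(u,1)(x_1,c(e))$, which is present in $Q\boxtimes K_{k+1}$ since $ux_1\in E(Q)$. Then every non-root vertex of $B_u$ lies in the interior of a single routed half-path and has degree at most $2$, and the radius is at most $\lceil(k+1)/2\rceil$. With that repair (or, more simply, by citing the shallow-minor lemma of \citet{HW24} as the paper does) the rest of your argument goes through and recovers the stated bounds.
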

		
\begin{proof}
Let $G$ be a $k$-planar graph with $n$ vertices and $m$ edges.
\citet{PachToth97} showed that $m\leq  4.108\sqrt{k}n$. \citet{HW24} showed that $G$ is a $(\frac{k}{2},4)$-shallow minor of $Q\boxtimes K_2$ for some planar graph $Q$. By \cref{PlanarBlockingShallowProduct}, $G$ is contained in $J \boxtimes P \boxtimes K_{g_{\epsilon}(2,\frac{k}{2},4)}$ for some graph $J$ with $\tw(J)\leq \binom{452}{3}-1$ and for some path $P$ with  $|V(P)|\leq (\tfrac{7}{\epsilon}+13)(n+m)^{(1+\epsilon)/2}\leq (\tfrac{7}{\epsilon}+13)((5\sqrt{k}+1)n)^{(1+\epsilon)/2}$. Thus the result holds with $g(\epsilon,k)=g_{\epsilon}(2,\frac{k}{2},4)$.
\end{proof}

Now we prove a similar product structure theorem for powers of bounded degree planar graphs.

\begin{thm}
There is a function $g$ such that for all $\epsilon\in (0,\tfrac12)$, integers $k,\Delta,n\geq 1$, and every $n$-vertex planar graph $G$ with maximum degree $\Delta$, the $k$-power $G^k$ is contained in $J \boxtimes P \boxtimes K_{g(\epsilon,k,\Delta)}$ for some graph $J$ with $\tw(J)\leq \binom{452}{3}-1$ and some path $P$ with  $|V(P)|\leq  (\frac{7}{\epsilon}+13) ((\tfrac12 \Delta^k+1) n)^{(1+\epsilon)/2}$.
\end{thm}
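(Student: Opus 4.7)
The plan is to mirror the $k$-planar argument: express $G^k$ as an $(r,s)$-shallow minor of $G \boxtimes K_d$ for parameters $r,s,d$ depending only on $k$ and $\Delta$, and then apply \cref{PlanarBlockingShallowProduct} with the planar host graph $G$. The final path-length bound will come from combining the trivial edge count $|E(G^k)| \le \tfrac12 n \Delta^k$ (each vertex has at most $\Delta^k$ vertices within $G$-distance $k$) with the shape $(\tfrac{7}{\epsilon}+13)(n+m)^{(1+\epsilon)/2}$ provided by \cref{PlanarBlockingShallowProduct}.

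To build the shallow-minor embedding, I would first fix a shortest $uv$-path $P_{uv}$ of length at most $k$ in $G$ for every edge $uv \in E(G^k)$. Any internal vertex $w$ of $P_{uv}$ has $\{u,v\} \subseteq N^k_G[w]$, and $|N^k_G[w]| \le 1 + \Delta + \cdots + \Delta^k$, so $w$ is internal to only boundedly many paths. Hence the conflict graph on $E(G^k)$, in which two edges are adjacent when their $P$-paths share an internal $G$-vertex, has maximum degree bounded by a function of $k$ and $\Delta$, and a greedy colouring yields a proper colouring $c:E(G^k)\to\{1,\dots,d\}$ with $d$ depending only on $k$ and $\Delta$. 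I would then route each edge $uv \in E(G^k)$ by a path $R_{uv}$ in $G \boxtimes K_{d+1}$ from $(u,0)$ to $(v,0)$ that visits $(w,c(uv))$ for each internal vertex $w$ of $P_{uv}$; consecutive vertices along $R_{uv}$ are adjacent in $G \boxtimes K_{d+1}$ by definition of the strong product. Routes that share a colour have internally disjoint $P$-paths by the colouring property, while routes of different colours are separated by their $K_{d+1}$-coordinates, and the reserved copy~$0$ keeps endpoints clear of any internal vertices.

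For each $v \in V(G^k)$, the branch set of $v$ consists of $(v,0)$ together with the $v$-half (the first $\lceil k/2 \rceil$ vertices) of each route $R_{uv}$ with $u \in N_{G^k}(v)$. This is a rooted tree of radius at most $\lceil k/2 \rceil$ whose non-root vertices lie on internally disjoint half-paths emanating from $(v,0)$ and therefore have degree at most $2$. The branch sets are pairwise vertex-disjoint, and each route $R_{uv}$ realises an edge between the branch sets of $u$ and $v$, so $G^k$ is a $(\lceil k/2 \rceil,\,2)$-shallow minor of $G \boxtimes K_{d+1}$. Applying \cref{PlanarBlockingShallowProduct} to the planar graph $G$ with these parameters yields $G^k \subsetsim J \boxtimes P \boxtimes K_{g(\epsilon,k,\Delta)}$ with $\tw(J) \le \binom{452}{3}-1$, path length $|V(P)| \le (\tfrac{7}{\epsilon}+13)((\tfrac12 \Delta^k+1)n)^{(1+\epsilon)/2}$, and $g(\epsilon,k,\Delta) := g_\epsilon(d+1,\lceil k/2 \rceil,2)$. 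The main obstacle is the simultaneous bookkeeping in the routing step — selecting the colouring and reserving the endpoint copy so that the branch sets are vertex-disjoint rooted trees with radius at most $\lceil k/2 \rceil$ and non-root degree at most $2$ — which is exactly what is needed for \cref{PlanarBlockingShallowProduct} to apply with parameters independent of $n$.
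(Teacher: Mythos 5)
Your proof is correct and follows the same overall strategy as the paper: bound $|E(G^k)| \le \tfrac12\Delta^k n$, exhibit $G^k$ as a shallow minor of $G\boxtimes K_d$ with $d$ a function of $k$ and $\Delta$, and apply \cref{PlanarBlockingShallowProduct}. The one place you diverge is the shallow-minor step: the paper simply invokes Lemma~25 of \citet{HW24}, which states that $G^k$ is a $(\lfloor k/2\rfloor,\Delta)$-shallow minor of $G\boxtimes K_{d+1}$ with $d$ the maximum degree of $G^{\lfloor k/2\rfloor}$, whereas you reconstruct such an embedding from scratch via a conflict-graph colouring of the chosen shortest paths and a routing through distinct $K_{d+1}$-layers. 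Your construction is self-contained and even gives the stronger degree parameter $s=2$ in place of $s=\Delta$; the slightly weaker radius $\lceil k/2\rceil$ you state (which could be sharpened to $\lfloor k/2\rfloor$ by splitting each route at the middle vertex rather than taking the ``first $\lceil k/2\rceil$'' from each end) is immaterial, since $r$ and $s$ are absorbed into the function $g$ and do not affect the path-length bound, which depends only on $n+m$. So the difference buys you independence from the external lemma at the cost of re-deriving it; the quantitative conclusion is identical.
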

		
\begin{proof}
Let $d$ be the maximum degree of $G^{\floor{k/2}}$, which is at most $\Delta^{\floor{k/2}}$. 
Let $m:=|E(G^k)|\leq \frac12 \Delta^k n$. \citet[Lemma~25]{HW24} showed that $G^k$ is a $(\floor{\frac{k}{2}},\Delta)$-shallow minor of $G\boxtimes K_{d+1}$, and thus of $G\boxtimes K_{\Delta^{\floor{k/2}}+1}$. By \cref{PlanarBlockingShallowProduct}, $G^k$ is contained in $J \boxtimes P \boxtimes K_{g_{\epsilon}(\Delta^{\floor{k/2}}+1,\floor{\frac{k}{2}},\Delta)}$ for some graph $J$ with $\tw(J)\leq \binom{452}{3}-1$ and some path $P$ with $|V(P)|\leq 
(\tfrac{7}{\epsilon}+13)(n+m)^{(1+\epsilon)/2}
 \leq  (\frac{7}{\epsilon}+13) ((\tfrac12 \Delta^k+1) n)^{(1+\epsilon)/2} 
 $.
\end{proof}

		\section{Open problems}
        \label{sec:openproblems}
		
		We conclude with a number of open problems that arise from this work:
		
		\begin{enumerate}
			\item Can a version of the Planar Graph Product Structure Theorem be proved with $|V(P)|\leq O(\sqrt{n})$ or with $|V(P)|\leq O(\tw(G))$. 
			
			\item It is natural to ask how large the graph $H$ must be in the Planar Graph Product Structure Theorem. Say $G\subsetsim H\boxtimes P \boxtimes K_c$ for some path $P$. Partitioning the vertices of $H\boxtimes P\boxtimes K_c$ according to their second coordinate corresponds to a path-partition of $G$. So $|V(H)|\geq \frac{\ppw(G)}{c}$, which can be linear in $|V(G)|$ (for example, if $G$ contains a vertex whose degree is linear in $|V(G)|$). It would be interesting to know whether path-partition-width is a tight bound for $|V(H)|$. The following is even possible.
			
			\begin{conj}
					There is an integer $c\geq 1$ such that for any integers $n,k,b\geq 1$ with $n\leq kb$, every $n$-vertex planar graph $G$ with treewidth at most $k$ and path-partition-width at most $b$ is contained in $H \boxtimes P \boxtimes K_c$ for some planar graph $H$ with treewidth at most $3$ and $|V(H)|\leq b$ and some path $P$ with $|V(P)|\leq k$.
				\end{conj}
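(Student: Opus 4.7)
The plan is to combine the techniques of this paper with the additional constraint $\ppw(G) \leq b$ to eliminate the $n^{\epsilon}$ overhead. By \cref{PartitionProduct}, it suffices to exhibit an $H$-partition $(B_x : x \in V(H))$ and a layering $\mathcal{L}$ of $G$ satisfying $|V(H)| \leq b$, $\tw(H) \leq 3$ with $H$ planar, $|\mathcal{L}| \leq k$, and $|B_x \cap L| \leq c$ for each $x$ and $L$, for some universal constant $c$.

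The natural starting point is the path-partition $\QQ = (Q_0, \ldots, Q_m)$ of $G$ with width at most $b$. I would first group the layers of $\QQ$ into $k$ consecutive super-layers $L_0, \ldots, L_{k-1}$; since the total vertex count is $n \leq kb$, the super-layers have average size at most $b$, though individual super-layers may be larger. This yields a valid layering with $\leq k$ layers because edges of $G$ only cross between adjacent or equal $Q_i$'s. For the $H$-partition, I would fix a universal planar $3$-tree template on $b$ vertices (for example a stacked polytope built by repeatedly stacking degree-$3$ vertices into triangular faces), and for each part $Q_i$ try to construct an injection $\sigma_i : Q_i \hookrightarrow V(H)$ such that consecutive injections $\sigma_i$ and $\sigma_{i+1}$ are compatible, in the sense that the endpoints of every $Q_i$--$Q_{i+1}$ edge of $G$ land on equal or $H$-adjacent vertices of $V(H)$. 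Setting $B_x := \{v \in V(G) : \sigma_{i(v)}(v) = x\}$ then yields the desired $H$-partition, and the cell bound $|B_x \cap L_j| \leq c$ would follow from the number of original layers grouped into each super-layer.

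The hard part is building the injections $\sigma_i$ consistently. It is unclear how to leverage the treewidth bound $\tw(G) \leq k$ to force compatibility between consecutive layers of the path-partition: existing applications of \cref{TreeDecompPartition-Kst} yield an $H$ whose size equals the number of bags in a tree-decomposition, which is typically $\Omega(n/k)$ and matches $b$ only at the extreme $n = \Theta(kb)$. I would try to adapt the seeded spanning forest machinery of \cref{sec:FindingSeeds}, seeded by $\QQ$ rather than by distance balls, and apply \cref{TreeDecompPartition-Kst} locally within each super-layer, then glue the local $H$-partitions using the planarity of $G$ and the constraint that edges between super-layers are few. A realistic first target is the weaker bound $|V(H)| \leq O(b)$, which may follow from a hybrid argument that runs \cref{MainLemma} with $\epsilon$ calibrated to $\log(b)/\log(n)$; obtaining the sharp bound $|V(H)| \leq b$ appears to require a new structural lemma about planar graphs simultaneously constrained in treewidth and path-partition-width. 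The case of planar $3$-trees, where the tree-decomposition is canonical and a natural candidate $H$ can be taken as a minor of $G$ itself, seems an enlightening test case.
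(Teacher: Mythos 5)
The statement you are attacking is labelled a \emph{conjecture} in the paper (Section~\ref{sec:openproblems}, item~2): the authors pose it as an open problem and give no proof. So there is no ``paper proof'' to compare your argument against; the relevant question is simply whether your sketch closes the gap, and as you yourself concede in the final paragraph, it does not. Let me flag the specific places where it breaks down.

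First, the super-layer grouping does not control the cell size. If the optimal path-partition $\QQ=(Q_0,\dots,Q_m)$ has $m+1$ original layers and you merge $\QQ$ into $k$ consecutive super-layers, then each super-layer contains roughly $(m+1)/k$ of the $Q_i$. Even if every injection $\sigma_i:Q_i\hookrightarrow V(H)$ is genuinely injective, a cell $B_x$ can pick up one vertex from each $Q_i$ inside a super-layer, giving $|B_x\cap L_j|\approx (m+1)/k$. The hypotheses $n\leq kb$ and $\ppw(G)\leq b$ place no upper bound on $m+1$ (most layers of $\QQ$ can be tiny), so this quantity is not bounded by a universal constant. You would instead need to group $\QQ$ into super-layers of total \emph{size} $\Theta(b)$, but then the number of super-layers is $\Theta(n/b)$, which need not be at most $k$.

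Second, the heart of the matter --- constructing compatible injections $\sigma_i:Q_i\hookrightarrow V(H)$ into a \emph{fixed} planar $3$-tree $H$ on $b$ vertices so that every $G$-edge between consecutive layers lands on an $H$-edge or a fixed vertex --- is exactly what the conjecture asserts, and you offer no mechanism for it. This is not a technical detail; it is the conjecture restated. The machinery the paper does supply (Theorems~\ref{TreeDecompPartition} and~\ref{TreeDecompPartition-Kst}) produces an $H$ whose order is tied to the number of bags of a tree-decomposition, and there is no obvious way to force that down to $b$. Your suggested ``hybrid'' run of Lemma~\ref{MainLemma} with $\epsilon\approx\log b/\log n$ also cannot help here: that lemma bounds $|\LL|$ (the number of layers) and the per-bag per-layer intersection, but says nothing about $|V(H)|$, which is what the conjecture constrains. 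In short, the proposal reproduces the framing of Observation~\ref{PartitionProduct} and correctly identifies where the difficulty lies, but it leaves the conjecture open, which is consistent with its status in the paper.
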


			\item Analogues of the Planar Graph Product Structure hold for graphs embeddable on any fixed surface~\citep{DJMMUW20,DHHW22} and for graphs which exclude a fixed apex graph as a minor~\citep{DJMMUW20,DHHJLMMRW}. Can these results be proved with $|V(P)|\leq O(n^{1-\epsilon})$ for some fixed $\epsilon>0$?
			
		\end{enumerate}
		
		\subsection*{Acknowledgements}
		
		Thanks to Robert Hickingbotham for helpful conversations about fan-planar graphs. 
		
		{
			\fontsize{10pt}{11pt}
			\selectfont
			\bibliographystyle{DavidNatbibStyle}
			\bibliography{DavidBibliography}
		}
	\end{document}